\documentclass{article}
\usepackage[utf8]{inputenc}
\usepackage{amsfonts}
\usepackage{amsmath}
\usepackage{amssymb}
\usepackage{mathtools}
\usepackage{amsthm}
\usepackage{dsfont}
\usepackage{enumerate}
\usepackage{color}
\usepackage[margin=1.35in]{geometry}
\usepackage[normalem]{ulem}
\usepackage{authblk}
\usepackage{mathscinet}
\allowdisplaybreaks 

\usepackage[
backend=biber,
style=alphabetic,doi=false,isbn=false,url=false,eprint=false
]{biblatex}
\addbibresource{bibliography.bib}

\usepackage{hyperref}
\usepackage{xurl}
\hypersetup{breaklinks=true}

\newcommand{\R}{\mathbb{R}}
\newcommand{\Q}{\mathbb{Q}}
\newcommand{\N}{\mathbb{N}}
\newcommand{\E}{\mathbb{E}}
\newcommand{\Prob}{\mathbb{P}}

\DeclareMathOperator{\Id}{Id}
\DeclareMathOperator{\Tr}{Tr}

\newtheorem{theorem}{Theorem}[section]

\newtheorem{definition}[theorem]{Definition}
\newtheorem{lemma}[theorem]{Lemma}
\newtheorem{assumption}[theorem]{Assumption}
\newtheorem{remark}[theorem]{Remark}

\title{A suitable nonlinear Stratonovich noise prevents blow-up in the Euler equations and other SPDEs}
\author{Marco Bagnara\footnote{Scuola Normale Superiore, Piazza dei Cavalieri 7, 56126 Pisa, Italy; \href{mailto:marco.bagnara@sns.it}{marco.bagnara@sns.it}}}

\date{}

\begin{document}

\maketitle

\begin{abstract}
We perturb the 3D Euler equations by a particular non-linear Stratonovich noise. We show the existence and uniqueness of a global-in-time (i.e.\ no blow-up) smooth solution. The result is a corollary of a more general theorem valid in an abstract framework, where the addition of such noise prevents the blow-up possibly induced by a drift with super-linear growth. The result is new with Stratonovich noise.
\end{abstract}

\smallskip
\noindent \textbf{Keywords:} 3D Euler equations, no blow-up, regularization by noise.

\smallskip
\noindent \textbf{MSC classification:} 35Q31, 60H15, 60H50, 76B03.

\tableofcontents

\section{Introduction}
\subsection{Deterministic and stochastic Euler equations}
Consider the incompressible Euler equations on a smooth, bounded and simply connected domain $D \subset \R^d$, with $d=2,3$. The Euler equations describe the motion of an incompressible, non-viscous fluid. The equations read as
\begin{align}\label{eq:det_Euler}
\begin{cases}
    \partial_t u + (u\cdot \nabla) u + \nabla p =0 &(t,x)\in [0,T]\times D,\\
    \operatorname{div} u=0 &(t,x)\in [0,T]\times D,\\
    u\cdot n=0 &(t,x)\in [0,T]\times \partial D,\\
    u(0,x)=u_0(x) & x \in D,
\end{cases}  
\end{align}
where $u:[0,T]\times D\to \R^d$ is the velocity field, $p:[0,T]\times D\to \R$ is the pressure field and $n$ is the outer unit normal on $\partial D$. If $\mathcal{P}$ denotes the so-called Leray projector, by applying $\mathcal{P}$ to \eqref{eq:det_Euler}, one obtains a Cauchy problem for the velocity field $u$
\begin{align}\label{eq:det_Euler_Cauchy}
\begin{cases}
    \partial_t u + \mathcal{P}(u\cdot \nabla) u =0 &(t,x)\in [0,T]\times D,\\
    u(0,x)=u_0(x) & x \in D,
\end{cases}    
\end{align}
and, given a solution $u$ of \eqref{eq:det_Euler_Cauchy}, can retrieve $p$ a posteriori.

Local well-posedness for the deterministic equations has been established in both Sobolev and  H\"older spaces (see \cite{Lichtenstein1925, gunther1927motion, EbiMar1970, Kato1988CommutatorEA}). However, despite in the two-dimensional case global well-posedness results have been achieved \cite{Wolibner1933, Yud1963}, in three dimensions it is still unknown whether smooth solutions exist globally. Actually, recent results suggest a negative answer to this questions (see, for instance, \cite{Elg2021, CheHou2022}).

In the stochastic case, global well-posedness for the stochastic Euler equations in two dimensions has been proven in several function spaces, see \cite{doi:10.1080, BesFla1999, BrzPes2001, 10.2307/2667278, doi:10.1137} 
and many others. On the other hand, in the three-dimensional setting, local well-posedness has been shown, for instance, by Kim \cite{Kim2009} under additive noise conditions, while Glatt-Holtz and Vicol \cite{GlaVic2014} showed local well-posedness for the stochastic Euler equations driven by a diverse range of multiplicative noise. Glatt-Holtz and Vicol also proved that the solution is global with high probability when the multiplicative noise is linear.

We refer to \cite{BMX2023} for a more in-depth outline on the existing literature.

\subsection{No blow-up by noise}

The lack of a global well-posedness result in the deterministic 3D Euler equations has lead to the question of whether noise can restore well-posedness. These kind of results are known in the literature as regularization by noise phenomena. Here, in particular, we explore whether a suitable noise can prevent solutions from blowing up. In the finite-dimensional framework, the question has been explored in, among others, \cite{doi101080Sch}, \cite{10.1214/EJP.v20-4047}, \cite{Athreya2012}, \cite{4484185}, \cite{gard1988introduction}, and, for Stratonovich noise, \cite{maurelli2020non}. 

In this paper, we address an infinite-dimensional blow-up problem associated with a strong superlinear noise. The approach is based on finding a suitable Lyapunov function, a technique inspired by finite-dimensional no-blow-up results. Similar works have been conducted in this direction, exploiting a superlinear It\^o noise. For example Alonso-Or\'an, Miao, and Tang in \cite{ALONSOORAN2022244} used this method to establish a no-blow-up result for a one-dimensional transport-type PDE with a nonlinear diffusion coefficient.  Ren, Tang, and Wang employed Lyapunov fucntions in \cite{RTW2020} for SPDEs characterized by a (possibly irregular and distribution-dependent) drift with superlinear growth and applied it to certain transport-type SPDEs. In \cite{TW2022}, Tang and Wang gave a general criterion applicable to a broad class of singular SPDEs, including the stochastic Euler and Navier-Stokes equations. In \cite{BMX2023}, Bagnara, Maurelli and Xu showed an example of no blow-up of Euler equations, as a by-product of a more general framework applicable to hyperbolic-type SPDEs.  Other approaches has also been explored, see for instance \cite{GlaVic2014}, \cite{BarRocZha2017}, \cite{flandoli2021high}. In general, we refer to the introduction in \cite{BMX2023} for a more detailed overview on the existing literature.

All the previously mentioned works which achieve a no blow-up result in infinite dimension by means of a strong super-linear noise fall in the category of It\^o noise. This is not just a stylistic choice in writing the articles, but more the fact that the technique relies on the It\^o second-order correction coming from It\^o lemma. Indeed, it is easy to convince themselves that the same noise, but in a Stratonovich integration framework, is not able to achieve the same results (at least with a similar proof). The point is that, in the Stratonovich case, the It\^o-Stratonovich correction and the It\^o second-order correction coming from those noises compete with each other preventig to obtain the key estimate in the Lyapunov function method.

In this note, we provide the first example (to our knowledge) of super-linear Stratonovich noise which is able to avoid blow-ups in infinite dimension in presence of a super-linear drift. A reason behind the relevance of the Stratonovich integration is the Wong-Zakai approximation theorem, which shows that the solution to the Stratonovich SDE is the limit of solutions to random ODEs associated to a regularized approximation of the noise \cite[Chapter 5.2, Proposition 2.24]{Karatzas-Shreve}. 

The shape of the noise is inspired by the work \cite{maurelli2020non} in the finite-dimensional context. Additionally, we note a conceptual similarity to the framework of stabilization by noise developed in \cite{Arnold-Crauel-Wihstutz} for linear systems (see the following subsection).

\subsection{Informal statements}

Let $E_1$, $E_0$, and $E_{-1}$ be three separable Hilbert spaces such that $E_1$ is compactly embedded in $E_0$ and $E_0$ is continuously embedded in $E_{-1}$. Let $(\Omega,\mathcal{A},(\mathcal{F}_t)_t,\Prob)$ be a filtered probability space. On such probability space, let $W$ be a $E_0$-valued $Q$-Wiener process, for a trace-class operator $Q:E_0 \to E_0$.
Consider the SDE,
\begin{align}
dX_t = b(X_t)dt +\sigma(X_t) \circ dW_t,\label{eq:SDE_intro}
\end{align}
where $b: E_0 \to E_{-1}$ is a Borel function, with $b(E_1)\subset E_0$, $\sigma: E_0 \to L(E_0,E_0)$ is a $C^1$ function and $\circ$ denotes the Stratonovich stochastic integration.
Given $R>0$, we require the diffusion coefficient to take the following form
\begin{align} \label{eq:sigma_intro}
    \sigma(x) = c_N\lVert x \rVert^\eta_{E_0} \left( I - \alpha  \frac {x \otimes x}{\lVert x \rVert^2_{E_0}} \right) \qquad \forall x \in B_{R;E_0}^c, 
\end{align}
for some real positive parameters $c_N$, $\eta$ and $\alpha$.

\begin{theorem}[informal, see Theorem \ref{thm:main}] \label{thm:main_intro}
Under some technical conditions for existence and uniqueness, assume that there exist $c_D>0$ and $m>2$ such that $\forall x \in B_{R;E_0}^c \cap E_1$ 
\begin{align}\label{eq:drift_bound_intro}
    \langle b(x),x \rangle_{E_0} \le c_D\lVert x \rVert^{m}_{E_0}.
\end{align}
Moreover, in \eqref{eq:sigma_intro}, let $\eta> m/2$, $c_N>0$ (or alternatively $\eta =m/2$, $c_N$ large enough) and $\alpha$ such that
\begin{align}\label{eq:alpha_condition_intro}
    1 < \alpha < 1+ \frac{1}{\eta-1} \frac{\Tr Q - \lVert Q \rVert}{\lVert Q \rVert}.
\end{align} 
Then, for every initial condition $x_0 \in E_1$, there exists a unique, global-in-time, strong solution of \eqref{eq:SDE_intro} in $L^\infty([0,T];E_1)\cap C([0,T];E_0)$, $\Prob$-a.s., for all $T>0$.
\end{theorem}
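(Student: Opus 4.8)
The natural strategy is the Lyapunov function method adapted to the Stratonovich setting. First I would convert the Stratonovich SDE \eqref{eq:SDE_intro} into its Itô form: since $\sigma$ is $C^1$, the Itô--Stratonovich correction term is $\tfrac12 \sum_k (D\sigma(X_t)[\sigma(X_t)Q^{1/2}e_k])(Q^{1/2}e_k)$ for an orthonormal basis $(e_k)$ of $E_0$, so that $dX_t = b(X_t)\,dt + \tfrac12 c(X_t)\,dt + \sigma(X_t)\,dW_t$ with the standard Itô integral. The key computation is then to evaluate, for $x \in B_{R;E_0}^c$, the drift of the real-valued process $\varphi(X_t)$ for a well-chosen Lyapunov functional $\varphi$ — a natural first guess being a (possibly logarithmic or negative-power) function of $\lVert X_t \rVert_{E_0}^2$, reminiscent of \cite{maurelli2020non}. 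Applying Itô's formula in $E_0$, the dangerous term is $\langle b(x), x\rangle_{E_0} \lesssim c_D \lVert x \rVert^m_{E_0}$, which is superlinear since $m>2$; everything hinges on showing that the second-order Itô term, coming from the quadratic variation of $\sigma(X_t)\,dW_t$ — which scales like $\lVert x \rVert^{2\eta}_{E_0}$ — plus the Stratonovich correction, produces a strictly negative contribution of order $\lVert x \rVert^{2\eta}_{E_0}$ that dominates when $2\eta > m$ (or equals it with $c_N$ large).

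The heart of the matter is the algebra of the projector-like structure $\sigma(x) = c_N \lVert x \rVert^\eta_{E_0}(I - \alpha\, x\otimes x/\lVert x\rVert^2_{E_0})$. Writing $P_x = x\otimes x / \lVert x \rVert^2_{E_0}$, one computes $\sigma(x)Q^{1/2}e_k$ and the traces $\sum_k \lVert \sigma(x)Q^{1/2}e_k\rVert^2$, $\sum_k \langle \sigma(x)Q^{1/2}e_k, x\rangle^2$, etc. The crucial point is that when one differentiates $\varphi$ (say $\varphi(x) = -\lVert x \rVert^{-\beta}_{E_0}$ or $\varphi(x) = \log\log\lVert x\rVert_{E_0}$ type), the Itô second-order term carries a factor that, for the scalar process $r_t = \lVert X_t\rVert_{E_0}$, involves the difference between $\Tr(\sigma\sigma^* Q)$-type quantities and the "radial" part $\langle \sigma\sigma^* x, x\rangle$; the anisotropy parameter $\alpha>1$ is tuned precisely so that the radial diffusion coefficient acquires the right sign/magnitude, while the non-radial part of the trace contributes a drift term on $r_t$ pushing it \emph{downward}. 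Condition \eqref{eq:alpha_condition_intro}, $1<\alpha<1+\tfrac1{\eta-1}\tfrac{\Tr Q - \lVert Q\rVert}{\lVert Q\rVert}$, should be exactly what is needed to make the total effective drift of the chosen $\varphi(r_t)$ bounded above uniformly on $B_{R;E_0}^c$ (or going to $-\infty$), beating the $c_D r^m$ drift once $2\eta \geq m$. I would isolate this as a deterministic inequality lemma: for all $x$ with $\lVert x\rVert_{E_0}>R$, the generator applied to $\varphi$ satisfies $\mathcal{L}\varphi(x) \le C$ for some constant $C$ independent of $x$.

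Once the Lyapunov estimate is in hand, the remaining steps are fairly standard. I would work with a localized/truncated equation (replacing $b$, $\sigma$ by globally Lipschitz approximations, or using stopping times $\tau_K = \inf\{t: \lVert X_t\rVert_{E_1} \ge K\}$) to get local existence and uniqueness in $L^\infty([0,T];E_1)\cap C([0,T];E_0)$ from the "technical conditions for existence and uniqueness" alluded to in the hypotheses (these would be cited from the earlier abstract framework). Then, applying the stopped Itô formula to $\varphi(X_{t\wedge\tau_K})$ and taking expectations, the bound $\mathcal{L}\varphi \le C$ yields $\E[\varphi(X_{t\wedge\tau_K})] \le \varphi(x_0) + Ct$; since $\varphi$ is chosen so that $\varphi(x)\to +\infty$ (or $\varphi$ controls $\lVert x\rVert_{E_0}$ from above) as $\lVert x\rVert_{E_0}\to\infty$, a Markov/Chebyshev-type argument forces $\Prob(\sup_{t\le T}\lVert X_t\rVert_{E_0} = \infty) = 0$, and then one upgrades the a priori $E_0$-bound to an $E_1$-bound via the abstract energy estimates (the compact embedding $E_1\hookrightarrow E_0$ and the structure of $b$ on $E_1$), concluding $\tau_K\uparrow\infty$ a.s. and hence global existence.

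The main obstacle I anticipate is the second paragraph: getting the sign and the precise constant in the Stratonovich case. Unlike the Itô case — where the second-order correction is a "free" negative term — here the Itô--Stratonovich correction $\tfrac12 c(x)$ partially cancels the beneficial second-order term, and one must verify that what survives still has the correct scaling $\lVert x\rVert^{2\eta}_{E_0}$ with a negative coefficient. The anisotropy $\alpha \ne 1$ is what breaks the cancellation in the right direction, and pinning down that \eqref{eq:alpha_condition_intro} is both sufficient and essentially sharp — in particular that one needs $\Tr Q > \lVert Q\rVert$, i.e. the noise must be genuinely infinite-dimensional / non-degenerate in more than one direction — is the delicate and novel part of the argument.
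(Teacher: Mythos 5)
Your plan is correct and follows essentially the same route as the paper: convert to Itô form, exhibit a concave radial Lyapunov function (the paper takes $V(x)=\log\lVert x\rVert_{E_0}$ outside a ball), verify that the anisotropy condition on $\alpha$ makes the generator bounded above with the surviving negative term of order $\lVert x\rVert_{E_0}^{2\eta-2}$ dominating $c_D\lVert x\rVert_{E_0}^{m-2}$, and conclude via Gronwall and Markov's inequality after constructing the local maximal solution by cutoff/Galerkin. The only point to be careful about, which the paper arranges through the $E_0$-cutoff construction, is that the maximal existence time must be characterized by blow-up of the $E_0$-norm (not the $E_1$-norm) so that the $E_0$-level Lyapunov estimate suffices.
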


Being $m>2$, condition \eqref{eq:drift_bound_intro} encodes the super-linearity of the drift, that can be responsible of blow-ups in the deterministic setting. However, under some mild assumptions on existence and uniqueness, Theorem \ref{thm:main_intro} ensures that there always exists a Stratonovich noise which prevents the blow-up of the infinite dimensional SDE. In particular, the diffusion coefficient is super-linear itself, i.e. $\eta\ge m/2>1$. Interestingly, this result highlights a fundamental difference with respect to the It\^o cases known in the literature. Indeed, condition \eqref{eq:alpha_condition_intro} to be satisfied requires that $\Tr Q > \lVert Q \rVert$, where $\lVert Q \rVert$ denotes the operatorial norm of $Q$ as map from $E_0$ into $E_0$. This means that the $Q$-Wiener process $W$ must be at least two-dimensional, meaning that $Q$ must have at least two non-zero eigenvalues (counted with multiplicity), but possibly with one much smaller than the other. On the other hand, similar results obtained exploiting a super-linear It\^o noise can be achieved even with a $1$-dimensional Wiener process.

Despite the main interest of this work is the infinite dimensional setting, Therorem \ref{thm:main_intro} can be also applied to the finite dimensional setting providing a generalization of the result in \cite{maurelli2020non}. In particular, Therorem \ref{thm:main_intro} shows that in $\R^n$ blow-ups can be prevented also with Brownian motions of dimension smaller than $n$ (but two at minimum).

Analysing the shape of the diffusion coefficient in \eqref{eq:sigma_intro}, we note a different amplitude in the radial direction with respect to all other orthogonal directions (rotational directions). Here, we observe a similarity with the framework of stabilization by noise for a linear system found in \cite{Arnold-Crauel-Wihstutz}. Arnold, Crauel and Wihstutz consider diffusion coefficients of rotation type in a Stratonovich framework and are able to prove stabilization when the drift matrix has negative trace. In our work, beside the Stratonovich noise, we also have as many rotation components in the diffusion matrix as the dimensionality of the Wiener process (al least two), though the radial component is still present.

Coming back to the Euler equations, we choose $E_1=H^{s+1}(D)$, $E_0=H^{s}(D)$ and $E_{-1}=H^{s-1}(D)$, for the smooth, bounded and simply connected domain $D \subset \R^d$ ($d=2,3$) choosen at the beginning. In the case of the Euler equations, the SPDE reads as follow
\begin{align}
du_t = -\mathcal{P}(u_t\cdot \nabla) u_t\, dt +\sigma(X_t) \circ dW_t.\label{eq:Stoch_Euler_intro}
\end{align}
The reason why the Euler equations fall within the abstract framework of Theorem \ref{thm:main_intro} is that the drift satisfies condition \eqref{eq:drift_bound_intro}. In particular, for $m>d/2+1$ and $u\in H^{m+1}(D)$ 
\begin{align*}
  \langle \mathcal{P}[(u\cdot \nabla)u], u\rangle_{H^m} \le C\|u\|_{W^{1,\infty}}\|u\|^2_{H^m}\le C\|u\|^3_{H^m},
\end{align*}
where we exploited the Sobolev embedding in the last inequality. As a consequence, we can apply the abstract result obtaining the following theorem.

\begin{theorem}[informal, see Theorem \ref{thm:ApplicationToEuler}]
Let $\eta> 3/2$, $c_N>0$ (or alternatively $\eta =3/2$, $c_N$ large enough) and $\alpha$ satisfying \eqref{eq:alpha_condition_intro}. Moreover, let $s\in \mathbb{N}$ and $s>d/2+1$.
Then, for every initial condition $u_0\in H^{s+1}(D)$, the stochastic Euler equations~\eqref{eq:Stoch_Euler_intro} have a unique, global-in-time, strong solution in $L^\infty([0,T];H^{s+1}(D))\cap C([0,T];H^{s}(D))$, $\Prob$-a.s., for all $T>0$.
\end{theorem}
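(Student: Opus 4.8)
The strategy is to realize the stochastic Euler system \eqref{eq:Stoch_Euler_intro} as an instance of the abstract equation \eqref{eq:SDE_intro} and then to check, one by one, the hypotheses of Theorem \ref{thm:main}. Fix $s \in \N$ with $s > d/2 + 1$ and take $E_1 = H^{s+1}(D)$, $E_0 = H^{s}(D)$, $E_{-1} = H^{s-1}(D)$, each understood as the closed subspace of divergence-free fields tangent to $\partial D$. These are separable Hilbert spaces; $E_1 \hookrightarrow E_0$ compactly by Rellich--Kondrachov on the bounded smooth domain $D$, and $E_0 \hookrightarrow E_{-1}$ continuously. The drift is $b(u) = -\mathcal{P}[(u\cdot\nabla)u]$. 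Since the Leray projector $\mathcal{P}$ is bounded on $H^{k}(D)$ for every $k \ge 0$, and $H^{s-1}(D)$ is a Banach algebra (as $s-1 > d/2$), the map $b$ sends $E_0$ continuously into $E_{-1}$ and $E_1$ into $E_0$, as required; in particular it is Borel. The diffusion coefficient is exactly the $C^1$ map $\sigma$ prescribed by \eqref{eq:sigma_intro} with $E_0 = H^{s}(D)$, and $W$ is the prescribed $Q$-Wiener process on $E_0$.

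Next I would verify the drift bound \eqref{eq:drift_bound_intro}. For $u \in E_1 = H^{s+1}(D)$ one has $\mathcal{P}u = u$, and, using the self-adjointness of $\mathcal{P}$ on $L^2$, integration by parts exploiting $\operatorname{div} u = 0$ in $D$ and $u\cdot n = 0$ on $\partial D$, together with the Kato--Ponce commutator estimates, the classical energy computation gives
\[
\langle b(u), u\rangle_{H^{s}} \;=\; -\langle (u\cdot\nabla)u,\, u\rangle_{H^{s}} \;\le\; C\,\|u\|_{W^{1,\infty}}\,\|u\|_{H^{s}}^{2} \;\le\; C\,\|u\|_{H^{s}}^{3},
\]
the last step being the Sobolev embedding $H^{s}(D) \hookrightarrow W^{1,\infty}(D)$, valid since $s > d/2 + 1$. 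Thus \eqref{eq:drift_bound_intro} holds with $c_D = C$ and $m = 3 > 2$. (Some care is needed with boundary contributions in the commutator manipulations on the bounded domain; this is classical and I would cite the standard references for the deterministic local theory of Euler in Sobolev spaces, e.g.\ Kato and Temam, or Glatt-Holtz--Vicol.)

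It then remains to check the ``technical conditions for existence and uniqueness'' required by Theorem \ref{thm:main}: essentially, local existence and pathwise uniqueness of an $E_1$-valued strong solution up to a stopping time, plus the relevant continuity/measurability of $b$ and $\sigma$ along the scale $E_{-1} \subset E_0 \subset E_1$. These follow by combining the deterministic local well-posedness of Euler in $H^{s+1}(D)$ with a localization (cutoff) and fixed-point argument for the Stratonovich SPDE, using that $\sigma$ is $C^1$ and coincides off a ball with the explicit bounded-type expression \eqref{eq:sigma_intro}, and converting between Itô and Stratonovich formulations. Since $m = 3$, one chooses $\eta > 3/2$ with $c_N > 0$ (or $\eta = 3/2$ with $c_N$ large), and $\alpha$ in the admissible range \eqref{eq:alpha_condition_intro}, which is non-empty because the prescribed $Q$ satisfies $\Tr Q > \lVert Q\rVert$. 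With all hypotheses in place, Theorem \ref{thm:main} applies and yields, for every $u_0 \in H^{s+1}(D)$, a unique global-in-time strong solution of \eqref{eq:Stoch_Euler_intro} in $L^\infty([0,T];H^{s+1}(D)) \cap C([0,T];H^{s}(D))$, $\Prob$-a.s., for every $T > 0$.

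The substantive difficulty is entirely carried by the abstract Theorem \ref{thm:main}; within this corollary the only genuine work is the energy estimate for $b$, whose one subtle point is the careful treatment of the Leray projector and boundary terms on the bounded domain $D$, together with confirming that the nonlinear Stratonovich Euler SPDE does fit the abstract local well-posedness framework.
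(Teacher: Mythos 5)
Your overall strategy is the paper's: choose $E_1=\mathcal H^{s+1}(D)$, $E_0=\mathcal H^{s}(D)$, $E_{-1}=\mathcal H^{s-1}(D)$ (divergence-free, tangent to the boundary), verify the hypotheses of the abstract theorem, and the $H^s$-energy estimate $\langle b(u),u\rangle_{H^s}\le C\|u\|_{H^s}^3$ via the commutator bound and Sobolev embedding is exactly the paper's verification of the second line of Assumption \ref{Assump:drift}-(b), giving $m=3$. However, there is a genuine gap in how you dispose of the remaining hypotheses. The abstract theorem is not fed by an independent ``deterministic local well-posedness plus fixed point'' argument: local existence and uniqueness are produced \emph{inside} the abstract framework (Galerkin approximation, compactness, and gluing of cutoff solutions), and what the Euler application must supply are the structural assumptions that make that machinery run. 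Two of these carry real content and are missing from your sketch. First, Assumption \ref{Assump:drift}-(b) also requires the \emph{Galerkin-projected} bound $\langle b^n(u),u\rangle_{E_1}\le C+g(\|u\|_{E_0})\|u\|_{E_1}^2$ in the $H^{s+1}$ inner product, uniformly in $n$. This is obtained by constructing a basis of $\mathcal H^{s-1}(D)$ consisting of $\mathcal H^{s+2}(D)$ elements that is orthogonal in \emph{both} the $H^{s-1}$ and the $H^{s+1}$ scalar products (Lemma \ref{Euler:basis}), so that $\langle \Pi_n \mathcal P(u\cdot\nabla u),u\rangle_{H^{s+1}}=\langle \mathcal P(u\cdot\nabla u),u\rangle_{H^{s+1}}$ for $u\in E^n$, after which the commutator estimate is applied at the level $m=s+1$ (requiring $u\in\mathcal H^{s+2}$, which is why the basis is taken there). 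Without this double orthogonality the projected estimate does not follow from the unprojected one.

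Second, you never verify Assumption \ref{Assump:drift}-(d), the local monotonicity
\begin{align*}
\langle \mathcal P[v\cdot\nabla v - w\cdot\nabla w],\,v-w\rangle_{H^{s}}
\le C\bigl(\|v\|_{H^{s}}+\|w\|_{H^{s+1}}\bigr)\|v-w\|_{H^{s}}^2 ,
\end{align*}
which is proved by splitting $v\cdot\nabla v-w\cdot\nabla w=v\cdot\nabla(v-w)+(v-w)\cdot\nabla w$ and applying the Moser and commutator estimates to the two pieces. This is precisely the hypothesis that yields pathwise uniqueness (and hence the gluing of cutoff solutions) in the abstract theorem; replacing it by an appeal to a fixed-point argument changes the logical structure and leaves the invocation of Theorem \ref{thm:main} unsupported. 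The remaining checks (the interpolation inequality of Assumption \ref{Assump:Spaces}, the conditions on $\sigma$ via Lemma \ref{lemma:coherence}, boundedness on balls of $b$ via the Moser estimate) are routine but should at least be named as the items being verified rather than absorbed into a generic claim of local well-posedness.
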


\subsection{Strategy of the proof and organization of the paper}

In Section \ref{section:Assumptions_&_Main_result} we introduce the notation, the abstract framework, the various assumptions and, in the end, we state the main result for the abstract SDE \eqref{eq:SDE_intro}.

In Section \ref{section:local_maximal_solution} we show existence and uniqueness of a local maximal solution of \eqref{eq:SDE_intro}. This is done without taking advantage of the specific shape of the noise. The construction of the solution is made by gluing together solutions to a localized SDE, obtained by multiplying the coeffiecients of \eqref{eq:SDE_intro} by a cutoff function. In particular, in Subsection \ref{subsection:Galerkin}, we consider finite-dimensional Galerkin approximations to the localized SDE and we show uniform bounds that imply the precompactness of such Galerkin approximations. In Subsection \ref{subsection:passage_to_the_limit} we pass to the limit, showing existence of weak solutions for the localized SDE. Finally, in Subsection \ref{subsecton:local_solution}, taking advantage of a uniqueness result, we are able to unambiguously construct local maximal solutions to \eqref{eq:SDE_intro}, by gluing together solutions of localized SDEs associated to different parameters of the cutoff function.

Section \ref{section:global_solution} is the core of the work. Here, we exhibit a suitable Lyapunov function, through which we are able to show global existence for \eqref{eq:SDE_intro}. The Lyapunov function is approximately $V(x)\approx \log \lVert x \rVert_{E_0}$ and its main property is the concavity along the radial direction. The concavity is responsible of a negative sign in the second derivative, which helps in computations due to It\^o lemma. In particular, it is possible to show a Gronwall estimate for $V(X_t)$, which ultimately implies the no blow-up result for $X_t$.

In Section \ref{section:Euler} we prove the no-blow up for $H^s$-solutions of the Euler equations ($s\in \N$, $s>d/2+1$). The result is achieved as an application of the no-blow up result for the abstract SDE \eqref{eq:SDE_intro}. 

We postponed in Appendix \ref{section:appendix_proofs} the proofs of some technical lemmas.

\subsection*{Acknowledgments}
The author would like to thank Mario Maurelli for the multiple pieces of advice and valuable discussions, which have been fundamental for the development of this note.

\section{Setting, assumptions and main result} \label{section:Assumptions_&_Main_result}

Let $E_1$, $E_0$, and $E_{-1}$ be three separable Hilbert spaces such that $E_1$ is compactly embedded in $E_0$ and $E_0$ is continuously embedded in $E_{-1}$, i.e.
\begin{align*}
E_1\subset \subset E_0\hookrightarrow E_{-1},
\end{align*}
and let $(\Omega,\mathcal{A},(\mathcal{F}_t)_t,\mathbb P)$ be a filtered probability space satisfying the standard assumption. We address the following stochastic differential equation,
\begin{align}
dX_t = b(X_t)dt +\sigma(X_t) \circ dW_t,\label{eq:SDE_Strat_compact}
\end{align}
where $b: E_0 \to E_{-1}$ is a Borel function, $\sigma: E_0 \to L(E_0,E_0)$ is a $C^1$ function, $W$ is a $Q$-Wiener process on $(\Omega,\mathcal{A},(\mathcal{F}_t)_t,\mathbb P)$ with values in $E_0$, for a given trace-class operator $Q:E_0\to E_0$, and $\circ$ denotes the Stratonovich stochastic integration.
By the compactness of trace class operators, the $Q$-Wiener process $W$ can be decomposed as
\begin{align*}
W=\sum_{k\ge1} \sqrt{\lambda_k}w_k W^k
\end{align*}
where $\{w_k\}_{k\in \N}$ is an orthonormal basis of $E_0$ made of eigenvectors of $Q$, $\{\lambda_k\}_{k\in\N}$ is the associated sequence of non-negative eigenvalues and $\{W^k\}_{k\in\N}$ are independent real Brownian motions on the filtered probability space $(\Omega,\mathcal{A},(\mathcal{F}_t)_t,\mathbb P)$.

\begin{assumption}\label{Assump:WienerProcess}
    We require that, despite being $W$ a $Q$-Wiener process in $E_0$, the eigenvectors $\{w_k\}_{k\in \N}$ associated to its covariance operator $Q$ are actually more regular and belong to $E_1$. Moreover, we require that $\{\lambda_k\}_{k\in\N}$ decay to $0$ fast enough such that
    \begin{equation}
        \sum_{k=1}^\infty \lambda_k \lVert w_k\rVert^2_{E_1}<\infty.
    \end{equation}
\end{assumption}

With the previous decomposition of the noise, the SDE \eqref{eq:SDE_Strat_compact} can be written as
\begin{align}\label{eq:SDE_Strat_extended}
dX_t = b(X_t)dt +\sum_{k\ge1} \sigma_k(X_t) \circ dW^k_t,
\end{align}
or, in It\^o form, as
\begin{align} \label{eq:SDE_Ito}
dX_t = b(X_t)dt +\frac12 \sum_{k\ge1} \left(\left(\sigma_k\cdot \nabla \right) \sigma_k\right)(X_t) dt+\sum_{k\ge1} \sigma_k(X_t) dW^k_t,
\end{align}
where
\begin{align}\label{eq:sigma_k}
\sigma_k(x)\coloneqq\sqrt{\lambda_k} \sigma(x) w_k.
\end{align}
For notational convenience, we will often write
\begin{align*}
&\left(\sigma\cdot\nabla\sigma\right)_k (x)\coloneqq \left(\left( \sigma_k\cdot \nabla \right)\sigma_k\right)(x),\\
&\left(\sigma\cdot\nabla\sigma\right)(x)\coloneqq \sum_{k\ge1} \left(\sigma\cdot\nabla\sigma\right)_k(x).
\end{align*}
For a given deterministic $E_1$-valued initial condition $X_0$, we give the definition of strong solution. We base our notion of solution on equation \eqref{eq:SDE_Ito} (i.e.\ the It\^o form). A posteriori, being $X$ a continuous semimartingale, it entails that $X$ satisfies also the corresponding Stratonovich equation \eqref{eq:SDE_Strat_extended}.

We remind that a stopping time $\tau$ is said to be accessible if there exists a non-decreasing sequence $(\tau_n)_{n\in\N}$ of stopping times, with $\lim_n \tau_n = \tau$ and $\tau_n<\tau$ for every $n$, $\Prob$-a.s.. Such a sequence $(\tau_n)_{n\in\N}$ is said to be an announcing sequence for $\tau$. Without loss of generality (possibly replacing $\tau_n$ with $\tau_n\wedge n$), we can and we will assume that $\tau_n$ is finite for every $n$.

\begin{definition}\label{def:strong}
Let $(\Omega,\mathcal{A},(\mathcal{F}_t)_t,\mathbb P)$ be a probability space and let $W$ be a $E_0$-valued $Q$-Wiener process. Let $\tau$ be an accessible stopping time, with announcing sequence $(\tau_n)_{n\in\N}$. We say that a progressively measurable process $(X_t)_{t\in [0,\tau)}$ on $(\Omega,\mathcal{A},(\mathcal{F}_t)_t,\mathbb P)$ and with values in $E_0$ is a strong solution to the equation \eqref{eq:SDE_Ito}, with the initial condition $X_0$, if 
\begin{itemize}
    \item $\Prob$-a.s., $X$ belongs to $C([0,\tau);E_0)$ and to $L^\infty([0,\tau_n);E_1)$, for every $n$;
    \item $\Prob$-a.s., $t\mapsto b(X_t)$, $t\mapsto\left(\sigma\cdot\nabla\sigma\right)(X_t)$ and $t\mapsto \sum_{k\ge1} \lVert \sigma_k(X_t) \rVert^2_{E_0}$ are locally integrable functions on $[0, \tau)$;
    \item the following identity holds in $E_{0}$  $\Prob$-a.s.:
\begin{align}\label{eq:SDE_Ito_Integral}
X_t = X_0 + \int_0^t b(X_r) dr +\frac12\int_0^t\left(\sigma\cdot \nabla  \sigma \right)(X_r) dr+\sum_{k\ge1} \int_0^t\sigma_k(X_r) dW^k_r,\quad \forall t\in [0,\tau). 
\end{align}
\end{itemize}
We say that $X$ is a global strong solution if $\tau=\infty$ $\Prob$-a.s..
\end{definition}

\begin{remark}
We assume $b$ to be $E_{-1}$-valued rather than $E_0$-valued to include fluid dynamic models whose drifts depend on the spatial gradient of $X_t$.
\end{remark}

In the remaining section, we summarize the assumptions needed for the main theorem and we conclude the section with its statement.
\begin{assumption}\label{Assump:Spaces}
 For separable Hilbert spaces $E_{-1},E_0,E_1$ introduced at the beginning, the compact embedding $E_1\subset \subset E_0$ and the continuous embedding $E_0\hookrightarrow E_{-1}$ are both dense. Moreover, there exist $\theta\in (0,1)$ and $M>0$ such that $\|v\|_{E_0}\le M\|v\|_{E_1}^{1-\theta}\|v\|_{E_{-1}}^\theta$ for all $v\in E_1$.
\end{assumption}

\begin{remark} \label{remark:norm_semi-continuity}
    As a consequence of Assumption \ref{Assump:Spaces}, the norm function $\lVert\cdot\rVert_{E_1}: E_0 \to [0,+\infty]$ is Borel and lower semi-continuous. Indeed, given a dense countable set $\left(f_n\right)_{n \in \N} \subset E_1^*$, we can write
    \begin{equation*}
        \lVert x \rVert_{E_1} = \sup_{n\in N} \frac{1}{\lVert f_n \rVert_{E_1^*}}f_n(x).
    \end{equation*}
    Being the embedding $E_0^*\hookrightarrow E_1^*$ continuous and dense, we can choose $f_n$ to be the functional induced by a countable dense set $\left(x_n\right)_{n \in \N} \subset E_0$. In particular, with slight abuse of notation, 
    \begin{equation*}
        \lVert x \rVert_{E_1} = \sup_{n\in N} \frac{1}{\lVert x_n \rVert_{E_1^*}}\langle x_n, x\rangle_{E_0}.
    \end{equation*}
    Therefore $\lVert\cdot\rVert_{E_1}: E_0 \to [0,+\infty]$ is Borel and lower semi-continuous, as a supremum of countably many lower semi-continuous functions.
\end{remark}

The construction of the solution of \eqref{eq:SDE_Ito} will make use of Galerkin projections within an approximation argument. The Galerkin projections will be performed with respect to a vector basis specified by the following assumption.

\begin{assumption}\label{Assump:Projection}
There exists a sequence of elements $\{e_n\}_{n\in\mathbb{N}_+}$ in $E_1$ whose span is dense in $E_{-1}$ such that, if $\Pi_n: E_{-1} \to E^n$ denotes the orthogonal projector from $E_{-1}$ to its subspace $E^n\coloneqq\text{span}\{e_1,\ldots e_n\}$, then for all $x \in E_1$
\begin{equation*}
\|\Pi_n x\|_{E_1}\le C\|x\|_{E_1},
\end{equation*}
for a constant $C>0$ independent of $n\in\mathbb{N}_+$ and $x$.
\end{assumption}

\begin{remark}
Assumption \ref{Assump:Projection} is not strictly required. Indeed, using Assumption \ref{Assump:Spaces}, one can always proceed as in the proof of Lemma~\ref{Euler:basis} (see below) to construct a basis of $E_{-1}$ that is made of $E_{1}$ elements and is orthogonal in terms of both scalar products. (In this situation, we can choose $C=1$.) The reason why we impose Assumption \ref{Assump:Projection} is that the choice of $\{e_n\}_{n\in\mathbb{N}^+}$ also enters Assumptions \ref{Assump:drift} and \ref{Assump:sigma}.
\end{remark}

Now we introduce, for $x\in E^n$,
\begin{align}
\label{eq:b^n&sigma_n}
\begin{split}
&b^n(x) = \Pi_n b(x),\\
&\left(\sigma \cdot\nabla\sigma\right)^n_k(x) = \Pi_n \left(\sigma \cdot\nabla\sigma\right)_k(x),\\
&\left(\sigma \cdot\nabla\sigma\right)^n(x) = \sum_{k=1}^n \left(\sigma \cdot\nabla\sigma\right)^n_k(x),\\
&\sigma^n(x) = \left(\Pi_n \sigma_1(x), \,\cdots, \Pi_n \sigma_n(x) \right).
\end{split}
\end{align}
 
To state the assumptions on the drift~$b$ and the noise coefficient~$\sigma$, we denote the open ball centered at the origin with radius $R$ by $B_R$ and the complement by $B_R^c$. If there is a need to emphasize the underlying topology, we write $B_{R;\mathcal{H}}$ and $B^c_{R;\mathcal{H}}=\mathcal{H}\setminus B_{R;\mathcal{H}}$ (i.e., $B_{R;\mathcal{H}}$ is a subset of $\mathcal{H}$, whose radius is measured in the $\mathcal{H}$-norm and $B^c_{R;\mathcal{H}}$ is the complement of $B_{R;\mathcal{H}}$ in $\mathcal{H}$). The closure of the ball is denoted by $\overline{B}_{R}$ or $\overline{B}_{R;\mathcal H}$.

We say that a function $f:\mathcal{H}\to \mathcal{K}$, with $\mathcal{H},~\mathcal{K}$ being separable Hilbert spaces, is bounded on balls if $\sup_{x\in \overline{B}_{R;\mathcal H}}\|f(x)\|_\mathcal{K}<\infty$ for every $R>0$. We use $a\vee b$ for $\max\{a,b\}$.

\begin{assumption}\label{Assump:drift} We assume that:
\begin{itemize}
\item[a)] $b:E_0\to E_{-1}$ is continuous and bounded on balls and $b(E_1)\subseteq E_0$.
\item[b)] There exist $R\ge 1$, $m > 2$, $c_D>0$ and a non-decreasing $g:\R\to \mathbb{R}$, such that
\begin{align*} 
\begin{cases}
\langle b^n(x), x \rangle_{E_1} \le C+g(\lVert x\rVert_{E_0})\lVert x\rVert_{E_1}^{2} \qquad& \forall x\in E^n, \;\forall  n\in \mathbb{N},\\
\langle b(x),x \rangle_{E_0} \le g(\lVert x\rVert_{E_0}) \lVert x \rVert^2_{E_0} &\forall x \in B_{R;E_0}^c \cap E_1, \\
g(\lVert x\rVert_{E_0})\le c_D\lVert x \rVert^{m-2}_{E_0} &\forall  x \in B_{R;E_0}^c \cap E_1.
\end{cases}
\end{align*}
\item[c)] The projected drift~$b^n:E^n\to E^n$ is locally Lipschitz for every $n\in \mathbb{N}$.
\item[d)] There exists an increasing function $\chi:\mathbb{R}\to \mathbb{R}_{+}$ such that
\begin{align*}
&\langle b(x)-b(y),x-y\rangle_{E_0} \leq
\chi (\Vert x\Vert_{E_1}\vee\Vert y\Vert_{E_1} )\Vert x-y\Vert_{E_0}^2\quad \text{for all } x,y\in E_{0}\cap E_1.
\end{align*}
\end{itemize}
\end{assumption}

\begin{assumption} \label{Assump:sigma} We assume that:
\begin{itemize}
\item[a)] $\sigma:E_0\to L(E_0,E_0)$ is $C^2$ with $\sigma$, $D\sigma:E_0\to L(E_0,L(E_0,E_0))$ and $D^2\sigma:E_0\to L(E_0,L(E_0,L(E_0,E_0)))$ bounded on $E_0$-balls.
\item[b)] There exists an increasing function $f:\mathbb{R}\to \mathbb{R}_+$ such that
\begin{align*}
\begin{cases}
\langle x, \left(\sigma\cdot \nabla \sigma\right)^n(x)\rangle_{E_1} \leq
f (\Vert x\Vert_{E_0} ) \lVert x \rVert_{E_1}^2 \quad& \forall x\in E^n, \;\forall  n\in \mathbb{N},\\
\sum_{k=1}^n \Vert   \sigma^n_k(x)\Vert^2_{E_1} \leq
f (\Vert x\Vert_{E_0} ) \lVert x \rVert_{E_1}^2 \quad& \forall x\in E^n, \;\forall  n\in \mathbb{N}.
\end{cases}
\end{align*}
\item[c)] We specify a particular form for $\sigma(x)$ far from the origin, i.e.
\begin{align} \label{eq:sigma}
    \sigma(x) = c_N\lVert x \rVert^\eta_{E_0} \left( I - \alpha  \frac {x \otimes x}{\lVert x \rVert^2_{E_0}} \right) \qquad \forall x \in B_{R;E_0}^c,
\end{align}
where both the norms and the implicit scalar product in $x \otimes x = x \langle x, \cdot \rangle$ are intended in $E_0$, $\alpha,\eta \in \R$ and $c_N>0$. We also prescribe
\begin{align*}
\eta \ge m/2
\end{align*}
for the $m$ introduced in Assumption \ref{Assump:drift}-(b).
\end{itemize}
\end{assumption}

Under Assumption \ref{Assump:sigma}, for $x\in B^c_{R;E_0}$, the noise coefficients $\sigma_k$ defined in \eqref{eq:sigma_k} take the form
\begin{align*}
    \sigma_k(x)=c_N\sqrt{\lambda_k} \lVert x \rVert^\eta \left( w_k - \alpha  \frac {1}{\lVert x \rVert^2}\langle x, w_k\rangle x\right),
\end{align*}
where, again, scalar products and norms are intended in $E_0$. After some computations, for $X_t\in B^c_{R;E_0}$, the SDE appears in the following form
\begin{align}\label{eq:SDE_ItoForm}
dX_t = b(X_t)dt +\frac12 \sum_{k\ge1} \Bigl(\alpha_k(X_t)w_k +\beta_k(X_t) X_t \Bigr) dt+\sum_{k\ge1}\Bigl( \gamma_k(X_t)w_k + \delta_k(X_t) X_t \Bigr)dW^k_t,
\end{align}
where $\alpha_k,\beta_k,\gamma_k,\delta_k: E_0 \to \R$ are given by
\begin{align}\label{eq:alpha_beta_gamma_delta}
\begin{split}
    \alpha_k(x)=&c_N^2\lambda_k (\eta - \alpha - \eta\alpha)\lVert x \rVert^{2\eta-2}_{E_0} \langle x,w_k\rangle_{E_0}\\
    \beta_k(x)=&c_N^2\lambda_k \lVert x \rVert^{2\eta-2}_{E_0}\Bigl(\lVert x \rVert^{-2}_{E_0} \langle x,w_k\rangle^2_{E_0}(\alpha^2\eta-\alpha(\eta-2)) -\alpha\lVert w_k \rVert_{E_0}^2\Bigr)\\
    \gamma_k(x)=&c_N\sqrt{\lambda_k} \lVert x \rVert^\eta_{E_0} \\
    \delta_k(x)=& -c_N \sqrt{\lambda_k}\alpha \lVert x \rVert^{\eta-2}_{E_0}\langle x,w_k\rangle_{E_0}.
\end{split}
\end{align}

\begin{lemma} \label{lemma:noise_properties}
    For $\sigma$ satisfying Assumption \ref{Assump:sigma}-(a) the following properties hold.\\ 
    There exists an increasing function $h:\mathbb{R}\to \mathbb{R}_+$ such that
            \begin{align}
            &\label{eq:sigma_property_1}
            \sum_{k\ge 1} \Vert  \sigma_k(x)\Vert^2_{E_0} \leq
            h (\Vert x\Vert_{E_0} )\\
            &\label{eq:sigma_property_2}
            \sum_{k\ge 1} \Vert   \sigma_k(x)-\sigma_k(y)\Vert^2_{E_0} \leq
            h (\Vert x\Vert_{E_0}\vee\Vert y\Vert_{E_0} )\Vert x-y\Vert^2_{E_0}\\
            &\label{eq:sigma_property_3}
            \sum_{k\ge 1}\sup_{x\in B_{R;E_0}}\|[\sigma\cdot\nabla \sigma]_k(x)\|_{E_0} \le h(R)\\
            &\label{eq:sigma_property_4}
            \Big\Vert \left(\sigma\cdot \nabla \sigma\right)(x)  \Big\Vert_{E_0} \le h (\Vert x\Vert_{E_0})\\
            &\label{eq:sigma_property_5}
            \Big\Vert \left(\sigma\cdot \nabla \sigma\right)(x) - \left(\sigma\cdot \nabla \sigma\right)(y)\Big\Vert_{E_0} \le h (\Vert x\Vert_{E_0}\vee\Vert y\Vert_{E_0} )\Vert x-y\Vert_{E_0}
            \end{align}
    Moreover, $\forall n\in\mathbb{N}$, $\sigma^n:E^n\to (E^n)^n$ and $\left(\sigma \cdot\nabla\sigma\right)^n:E^n\to E^n$ are locally Lipschitz.
\end{lemma}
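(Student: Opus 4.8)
The plan is to work separately on the two regions $\overline{B}_{R;E_0}$ and $B^c_{R;E_0}$, exploiting the explicit formula \eqref{eq:sigma} on the outer region and Assumption \ref{Assump:sigma}-(a) (boundedness on balls of $\sigma$, $D\sigma$, $D^2\sigma$) on the inner region, and then patching. First I would record the pointwise bounds that follow directly from the shape \eqref{eq:sigma}: from \eqref{eq:sigma_k} we get $\|\sigma_k(x)\|_{E_0}^2 \le 2 c_N^2 \lambda_k \|x\|_{E_0}^{2\eta}(1+\alpha^2)$ on $B^c_{R;E_0}$, so summing over $k$ and using $\sum_k\lambda_k = \Tr Q < \infty$ gives \eqref{eq:sigma_property_1} on the outer region with $h(r)\sim C(1+r^{2\eta})$; on $\overline{B}_{R;E_0}$, $\|\sigma_k(x)\|_{E_0} = \sqrt{\lambda_k}\|\sigma(x)w_k\|_{E_0}\le \sqrt{\lambda_k}\|\sigma(x)\|_{L(E_0,E_0)}\|w_k\|_{E_0}$ with $\|w_k\|_{E_0}=1$, and $\|\sigma\|_{L(E_0,E_0)}$ bounded on $\overline{B}_{R;E_0}$ by Assumption \ref{Assump:sigma}-(a), so again summing against $\sum_k\lambda_k$ closes it; taking the max of the two regional bounds yields a single increasing $h$. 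Similarly, for $(\sigma\cdot\nabla\sigma)_k$ one uses the chain rule $(\sigma_k\cdot\nabla\sigma_k)(x) = D\sigma_k(x)[\sigma_k(x)]$ together with $D\sigma_k(x) = \sqrt{\lambda_k}\,D\sigma(x)[\,\cdot\,]w_k$ — more precisely $\sigma_k(x) = \sqrt{\lambda_k}\sigma(x)w_k$ so $D\sigma_k(x)[v] = \sqrt{\lambda_k}(D\sigma(x)[v])w_k$, hence $\|(\sigma\cdot\nabla\sigma)_k(x)\|_{E_0}\le \lambda_k\|D\sigma(x)\|\,\|\sigma(x)\|\,\|w_k\|_{E_0}^2$. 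On $\overline{B}_{R;E_0}$ this is bounded on balls by Assumption \ref{Assump:sigma}-(a), giving \eqref{eq:sigma_property_3} after summing against $\sum_k\lambda_k$, and on $B^c_{R;E_0}$ one can either differentiate \eqref{eq:sigma} directly or just read off $\alpha_k,\beta_k,\gamma_k,\delta_k$ from \eqref{eq:alpha_beta_gamma_delta}: each of these is bounded by $C\sqrt{\lambda_k}$ or $C\lambda_k$ times a power of $\|x\|_{E_0}$, so $\|(\sigma\cdot\nabla\sigma)(x)\|_{E_0}\le\sum_k\lambda_k\cdot C(1+\|x\|_{E_0})^{2\eta-1}$, which is \eqref{eq:sigma_property_4}; again combine the two regions into one increasing $h$.

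For the Lipschitz-type estimates \eqref{eq:sigma_property_2} and \eqref{eq:sigma_property_5} I would use the mean value inequality along the segment $[x,y]$. Concretely, $\sigma_k(x)-\sigma_k(y) = \sqrt{\lambda_k}(\sigma(x)-\sigma(y))w_k$ and $\|\sigma(x)-\sigma(y)\|_{L(E_0,E_0)}\le \big(\sup_{z\in[x,y]}\|D\sigma(z)\|\big)\|x-y\|_{E_0}$; since $[x,y]\subset \overline{B}_{\|x\|_{E_0}\vee\|y\|_{E_0};E_0}$ and $D\sigma$ is bounded on balls, the sup is controlled by an increasing function of $\|x\|_{E_0}\vee\|y\|_{E_0}$, so squaring and summing against $\sum_k\lambda_k$ gives \eqref{eq:sigma_property_2}. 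For \eqref{eq:sigma_property_5} the same mechanism applies to $(\sigma\cdot\nabla\sigma)_k(x) = \sqrt{\lambda_k}\,(D\sigma(x)[\sqrt{\lambda_k}\sigma(x)w_k])w_k = \lambda_k (D\sigma(x)[\sigma(x)w_k])w_k$; this is a product of $\sigma$ and $D\sigma$, so its differential involves $D\sigma\cdot D\sigma$ and $D^2\sigma\cdot\sigma$, both bounded on balls by Assumption \ref{Assump:sigma}-(a), hence $x\mapsto (\sigma\cdot\nabla\sigma)_k(x)$ is locally Lipschitz with a constant of the form $\lambda_k \cdot (\text{increasing function of the ball radius})$; the mean value inequality on $[x,y]$ then yields the bound with the factor $\|x-y\|_{E_0}$, and summing against $\sum_k\lambda_k$ closes \eqref{eq:sigma_property_5}.

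Finally, for the last assertion — local Lipschitzness of $\sigma^n:E^n\to(E^n)^n$ and $(\sigma\cdot\nabla\sigma)^n:E^n\to E^n$ — I would note that each component of $\sigma^n$ is $x\mapsto \Pi_n\sigma_k(x) = \sqrt{\lambda_k}\,\Pi_n(\sigma(x)w_k)$, a composition of the continuous linear projection $\Pi_n$ with the locally Lipschitz map $x\mapsto\sigma(x)w_k$ (locally Lipschitz because $D\sigma$ is bounded on balls); since $E^n$ is finite-dimensional and only $k=1,\dots,n$ appear, a finite sum/tuple of locally Lipschitz maps is locally Lipschitz. The same reasoning applies to $(\sigma\cdot\nabla\sigma)^n(x)=\sum_{k=1}^n\Pi_n(\sigma\cdot\nabla\sigma)_k(x)$ using the per-$k$ local Lipschitzness established in the previous paragraph. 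The main obstacle — really the only non-bookkeeping point — is being careful that every constant depending on $k$ is of the form $C\lambda_k$ or $C\sqrt{\lambda_k}$ with $C$ uniform in $k$ (so that $\sum_k\lambda_k<\infty$, i.e. trace-class, does the summation), and that on the outer region the growth exponents in $\|x\|_{E_0}$ coming from \eqref{eq:alpha_beta_gamma_delta} match up to give a single increasing $h$; everywhere else it is a routine application of the mean value inequality plus boundedness on balls. I would relegate the verification to the appendix as the paper indicates.
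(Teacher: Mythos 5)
Your proposal is correct and uses essentially the same mechanism as the paper's proof: per-$k$ bounds of the form $C\lambda_k$ or $C\sqrt{\lambda_k}$ obtained from the boundedness on balls of $\sigma$, $D\sigma$, $D^2\sigma$, summed against $\Tr Q<\infty$, with the mean value inequality supplying the Lipschitz factors and the finite-dimensional norm equivalence handling the projected maps. The only (harmless) difference is your splitting into $\overline{B}_{R;E_0}$ and $B^c_{R;E_0}$ with the explicit formula \eqref{eq:sigma} on the outer region: the lemma assumes only Assumption \ref{Assump:sigma}-(a), whose boundedness on balls already covers the outer region intersected with any ball, so the paper treats all of $E_0$ uniformly without invoking part (c).
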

\begin{proof}
    Proof in the appendix, see Subsection \ref{subsection:proof_lemma_noise_properties}.
\end{proof}

\begin{lemma} \label{lemma:coherence}
    The prescribed form for $\sigma$ given in \eqref{eq:sigma} is consistent with the other assumptions in Assumption \ref{Assump:sigma}. In particular, if $\psi :\R \to [0,1]$ is a smooth even function such that $\psi(s)\equiv 0$ for $|s|\le R/2$ and $\psi(s)\equiv 1$ for $|s|\ge R$,
\begin{align} \label{eq:sigma_full_space}
    \sigma(x) = c_N\psi (\lVert x \rVert_{E_0})\lVert x \rVert^\eta_{E_0} \left( I - \alpha  \frac {x \otimes x}{\lVert x \rVert^2_{E_0}} \right)
\end{align}
is an example of $\sigma$ defined on the whole space $E_0$ and satisfying Assumption \ref{Assump:sigma}.
\end{lemma}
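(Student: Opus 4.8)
The plan is to verify the three items of Assumption~\ref{Assump:sigma} for the function $\sigma$ defined on all of $E_0$ by \eqref{eq:sigma_full_space}. Item~(c) is immediate, since $\psi\equiv1$ on $\{|s|\ge R\}$ makes \eqref{eq:sigma_full_space} coincide with \eqref{eq:sigma} on $B^c_{R;E_0}$, and $\eta\ge m/2$ is just a constraint imposed on the parameter. For item~(a) I would write $\sigma(x)=a(\Vert x\Vert_{E_0})\,I-\tilde b(\Vert x\Vert_{E_0})\,x\otimes x$ with $a(s)=c_N\psi(s)s^\eta$ and $\tilde b(s)=c_N\alpha\psi(s)s^{\eta-2}$. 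As $\psi$ vanishes near $0$, the scalars $a,\tilde b$ extend to $C^\infty([0,\infty))$ functions that are $\equiv0$ on $[0,R/2]$, hence their compositions with $x\mapsto\Vert x\Vert_{E_0}$ are $C^\infty$ on all of $E_0$ (the only non-smoothness of the norm, at the origin, is harmless because $a,\tilde b$ and all their derivatives already vanish there); together with the fact that $x\mapsto x\otimes x$ is a smooth quadratic map $E_0\to L(E_0,E_0)$ with derivatives bounded on balls, this gives $\sigma\in C^\infty$ (in particular $C^2$). Boundedness of $\sigma,D\sigma,D^2\sigma$ on $\overline B_{R';E_0}$ follows from the product/chain rules, because every factor occurring in $D\sigma$ and $D^2\sigma$ --- the values of $a,\tilde b$ and their first two derivatives on $[0,R']$, the operators $I$, $x\otimes x$, $h\otimes x+x\otimes h$, $h\otimes k+k\otimes h$, and the first two derivatives of the $E_0$-norm (bounded in operator norm by $1$ and $4/R$ on the relevant set $\{\Vert x\Vert_{E_0}\ge R/2\}$) --- is bounded by a constant depending only on $R',\eta,\alpha,c_N,\psi$; this is the routine part, done by listing the terms.

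For item~(b) the key point is that for $x\in E_1$ one has $\sigma(x)w_k=a(\Vert x\Vert_{E_0})w_k-\tilde b(\Vert x\Vert_{E_0})\langle x,w_k\rangle_{E_0}x\in E_1$, so $\sigma_k(x)\in E_1$ and $(\sigma\cdot\nabla\sigma)_k(x)=D\sigma_k(x)[\sigma_k(x)]\in E_1$, and therefore Assumption~\ref{Assump:Projection} applies: $\Vert\Pi_n\sigma_k(x)\Vert_{E_1}\le C\Vert\sigma_k(x)\Vert_{E_1}$ and likewise for $(\sigma\cdot\nabla\sigma)_k(x)$. For the second inequality of (b) I would bound $\Vert\sigma_k(x)\Vert_{E_1}^2\le 2\lambda_k(|a(\Vert x\Vert_{E_0})|^2\Vert w_k\Vert_{E_1}^2+|\tilde b(\Vert x\Vert_{E_0})|^2\Vert x\Vert_{E_0}^2\Vert x\Vert_{E_1}^2)$, using $\Vert w_k\Vert_{E_0}=1$ and $|\langle x,w_k\rangle_{E_0}|\le\Vert x\Vert_{E_0}$, then sum over $k\le n$ invoking $\sum_k\lambda_k\Vert w_k\Vert_{E_1}^2<\infty$ (Assumption~\ref{Assump:WienerProcess}) and $\sum_k\lambda_k=\Tr Q<\infty$, and finally absorb the $|a|^2$-term via $|a(\Vert x\Vert_{E_0})|^2\le c_N^2\Vert x\Vert_{E_0}^{2\eta-2}\Vert x\Vert_{E_0}^2$ together with $\Vert x\Vert_{E_0}\le C\Vert x\Vert_{E_1}$ (continuity of $E_1\hookrightarrow E_0$) and the observation that only $\Vert x\Vert_{E_0}\ge R/2$ contributes; since $2\eta-2>0$ this yields an increasing $f_2$ with $\sum_{k\le n}\Vert\sigma^n_k(x)\Vert_{E_1}^2\le f_2(\Vert x\Vert_{E_0})\Vert x\Vert_{E_1}^2$. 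For the first inequality of (b), Cauchy--Schwarz in $E_1$ and the triangle inequality reduce matters to bounding $\sum_{k\ge1}\Vert(\sigma\cdot\nabla\sigma)_k(x)\Vert_{E_1}$; for $\Vert x\Vert_{E_0}>R$ the coefficients in \eqref{eq:alpha_beta_gamma_delta} give $(\sigma\cdot\nabla\sigma)_k(x)=\alpha_k(x)w_k+\beta_k(x)x$ with $|\alpha_k(x)|\le C\lambda_k\Vert x\Vert_{E_0}^{2\eta-1}$, $|\beta_k(x)|\le C\lambda_k\Vert x\Vert_{E_0}^{2\eta-2}$, for $R/2\le\Vert x\Vert_{E_0}\le R$ the same structure holds with $\Vert x\Vert_{E_0}$-bounded scalar coefficients, and $(\sigma\cdot\nabla\sigma)_k\equiv0$ for $\Vert x\Vert_{E_0}\le R/2$; summing over $k$ with $\sum_k\lambda_k\Vert w_k\Vert_{E_1}\le(\sum_k\lambda_k)^{1/2}(\sum_k\lambda_k\Vert w_k\Vert_{E_1}^2)^{1/2}<\infty$ and converting residual powers of $\Vert x\Vert_{E_0}$ to $\Vert x\Vert_{E_1}$ as above gives $\sum_{k\ge1}\Vert(\sigma\cdot\nabla\sigma)_k(x)\Vert_{E_1}\le f_1(\Vert x\Vert_{E_0})\Vert x\Vert_{E_1}$ with $f_1$ increasing, hence item~(b) holds with $f=\max\{Cf_1,f_2\}$. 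The remaining conclusions of Lemma~\ref{lemma:noise_properties} (the $E_0$-estimates \eqref{eq:sigma_property_1}--\eqref{eq:sigma_property_5} and local Lipschitzness of $\sigma^n$ and $(\sigma\cdot\nabla\sigma)^n$) then follow from item~(a) through Lemma~\ref{lemma:noise_properties} itself.

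I expect the obstacle to be organisational rather than conceptual: one must expand $(\sigma\cdot\nabla\sigma)_k=D\sigma_k[\sigma_k]$ explicitly in the transition annulus $R/2\le\Vert x\Vert_{E_0}\le R$, where \eqref{eq:alpha_beta_gamma_delta} is unavailable, and check that after summation in $k$ every term factors as $\Vert x\Vert_{E_1}^2$ times an increasing function of $\Vert x\Vert_{E_0}$ alone. The only genuine idea is the repeated conversion of stray powers of $\Vert x\Vert_{E_0}$ into the required power of $\Vert x\Vert_{E_1}$, using the continuous embedding $\Vert x\Vert_{E_0}\le C\Vert x\Vert_{E_1}$ and the vanishing of $\sigma$ and its derivatives on $\{\Vert x\Vert_{E_0}\le R/2\}$.
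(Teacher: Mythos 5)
Your proposal is correct and follows essentially the same route as the paper: item (c) is immediate, item (a) reduces to smoothness and boundedness of the radial scalar coefficients (harmless at the origin because $\psi$ vanishes there), and item (b) is verified via the explicit coefficients \eqref{eq:alpha_beta_gamma_delta} together with $\sum_k\lambda_k\lVert w_k\rVert_{E_1}^2<\infty$, $\Tr Q<\infty$, Assumption \ref{Assump:Projection}, and the conversion of stray powers of $\lVert x\rVert_{E_0}$ into $\lVert x\rVert_{E_1}$. The transition-annulus term you flag as the organisational obstacle is handled in the paper exactly as you anticipate, by the Leibniz rule applied to $\psi(\lVert x\rVert_{E_0})\sigma(x)$, which produces an extra $\psi\psi'/\lVert x\rVert_{E_0}$ term that is bounded by Cauchy--Schwarz using the already-established estimate on $\sum_{k\le n}\lVert\sigma^n_k(x)\rVert_{E_1}^2$.
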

\begin{proof}
    Proof in the appendix, see Subsection \ref{subsection:proof_lemma_coherence}.
\end{proof}

Below is our first main result of this paper, which is an immediate conclusion of Theorems~\ref{thm:global_existence} and~\ref{thm:pathwise_uniqueness} in Section \ref{section:global_solution}.

\begin{theorem} \label{thm:main}
Under Assumptions \ref{Assump:WienerProcess}, \ref{Assump:Spaces}, \ref{Assump:Projection}, \ref{Assump:drift} and \ref{Assump:sigma}, let in \eqref{eq:sigma} $\eta> m/2$, $c_N>0$ (or alternatively $\eta =m/2$, $c_N$ large enough) and $\alpha$ such that
\begin{align}\label{eq:alpha_main}
    1 < \alpha < 1+ \frac{1}{\eta-1} \frac{\Tr Q - \lVert Q \rVert}{\lVert Q \rVert}.
\end{align} 
Then, for every deterministic initial condition $x_0 \in E_1$, there exists a unique, global-in-time, strong solution of \eqref{eq:SDE_intro} in $L^\infty([0,T];E_1)\cap C([0,T];E_0)$, $\Prob$-a.s., for all $T>0$.
\end{theorem}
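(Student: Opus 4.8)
\emph{Proof sketch.} The plan is to combine the local theory of Section~\ref{section:local_maximal_solution} with a Lyapunov estimate. Theorem~\ref{thm:main} follows once we have: (i) a maximal strong solution $(X_t)_{t\in[0,\tau)}$ with the blow-up alternative that on $\{\tau<\infty\}$ one has $\limsup_{t\uparrow\tau}\lVert X_t\rVert_{E_0}=\infty$; and (ii) $\Prob(\tau=\infty)=1$ (global existence, Theorem~\ref{thm:global_existence}). Part (i) is what Section~\ref{section:local_maximal_solution} does: localise the coefficients by a cutoff in $\lVert\cdot\rVert_{E_0}$, solve the truncated equation by Galerkin approximation (Assumptions~\ref{Assump:Projection}, \ref{Assump:drift} and~\ref{Assump:sigma} together with Lemma~\ref{lemma:noise_properties} provide the uniform bounds, compactness and passage to the limit), glue the truncated solutions via pathwise uniqueness, and observe that the first line of Assumption~\ref{Assump:drift}-(b) together with Assumption~\ref{Assump:sigma}-(b) yields an Itô/Gronwall bound on $\lVert X_t^n\rVert_{E_1}^{2}$ showing the $E_1$-norm stays finite as long as the $E_0$-norm is bounded, so that explosion is already detected in $E_0$.

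For part (ii), the core of the argument, take a Lyapunov function $V\colon E_0\to\R$ depending only on $\lVert x\rVert_{E_0}$: choose it $C^2$, bounded below, equal to $2\log\lVert x\rVert_{E_0}$ for $\lVert x\rVert_{E_0}\ge R$ and constant near the origin. Its decisive feature is radial concavity, which makes the second-order Itô term negative. Since $X$ is an $E_0$-valued semimartingale solving~\eqref{eq:SDE_Ito}, apply Itô's formula to $V(X_t)$; on $\{\lVert X_t\rVert_{E_0}>R\}$, inserting the explicit coefficients~\eqref{eq:SDE_ItoForm}--\eqref{eq:alpha_beta_gamma_delta} and using $\lVert w_k\rVert_{E_0}=1$ together with $\sum_{k}\lambda_k\langle x,w_k\rangle_{E_0}^{2}=\langle Qx,x\rangle_{E_0}$, a direct computation gives that the $dt$-drift of $V(X_t)$ equals
\begin{align*}
\frac{2\langle b(X_t),X_t\rangle_{E_0}}{\lVert X_t\rVert_{E_0}^{2}}
+c_N^{2}(1-\alpha)\,\lVert X_t\rVert_{E_0}^{2\eta-4}\bigl(\lVert X_t\rVert_{E_0}^{2}\,\Tr Q+\mu\,\langle QX_t,X_t\rangle_{E_0}\bigr),\qquad\mu:=(\eta-2)-\alpha(\eta-1).
\end{align*}
Since $\alpha>1$ and $\eta>1$, we have $\mu<0$, hence $\mu\langle Qx,x\rangle_{E_0}\ge\mu\lVert Q\rVert\,\lVert x\rVert_{E_0}^{2}$, and the elementary equivalence $\Tr Q+\mu\lVert Q\rVert>0\iff\alpha<1+\frac1{\eta-1}\frac{\Tr Q-\lVert Q\rVert}{\lVert Q\rVert}$ shows that~\eqref{eq:alpha_main} is exactly what forces $\Tr Q+\mu\lVert Q\rVert>0$. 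Thus, for $\lVert X_t\rVert_{E_0}>R$, the noise contribution is at most $-c_N^{2}(\alpha-1)(\Tr Q+\mu\lVert Q\rVert)\lVert X_t\rVert_{E_0}^{2\eta-2}$, while Assumption~\ref{Assump:drift}-(b) bounds the drift term by $2c_D\lVert X_t\rVert_{E_0}^{m-2}$. As $2\eta-2\ge m-2$, with strict inequality when $\eta>m/2$ and equality when $\eta=m/2$ (in which case $c_N$ large enough makes $c_N^{2}(\alpha-1)(\Tr Q+\mu\lVert Q\rVert)>2c_D$), the negative term dominates for large $\lVert X_t\rVert_{E_0}$; inside $B_{R;E_0}$ all coefficients are bounded on balls and the derivatives of $V$ are bounded. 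Altogether, the $dt$-drift of $V(X_t)$ is bounded above by a deterministic constant $C$.

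A routine localisation then shows that, with $\rho:=\inf\{s:\lVert X_s\rVert_{E_0}\ge j\}\wedge\tau_n$, the process $s\mapsto V(X_{s\wedge\rho})-C(s\wedge\rho)$ is a true supermartingale, so $\E[V(X_{t\wedge\rho})]\le V(x_0)+Ct$; since $\lVert X_\rho\rVert_{E_0}=j$ on $\{\inf\{s:\lVert X_s\rVert_{E_0}\ge j\}\le t\wedge\tau_n\}$ for $j>\lVert x_0\rVert_{E_0}$, and $V\ge\inf V$, this gives $\Prob(\sup_{s\le t\wedge\tau_n}\lVert X_s\rVert_{E_0}\ge j)\le(V(x_0)+Ct-\inf V)/(2\log j-\inf V)$. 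Letting $n\to\infty$ and then $j\to\infty$ yields $\Prob(\sup_{s<t\wedge\tau}\lVert X_s\rVert_{E_0}=\infty)=0$, which by the blow-up alternative of part (i) forces $\Prob(\tau\le t)=0$; as $t>0$ is arbitrary, $\tau=\infty$ $\Prob$-a.s. Uniqueness (Theorem~\ref{thm:pathwise_uniqueness}) is obtained by a Gronwall estimate on $\lVert X_t-Y_t\rVert_{E_0}^{2}$ for two solutions up to a stopping time controlling $\lVert X\rVert_{E_1}\vee\lVert Y\rVert_{E_1}$, using Assumption~\ref{Assump:drift}-(d) for the drift and~\eqref{eq:sigma_property_2}, \eqref{eq:sigma_property_5} for the noise. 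The main obstacle is the drift identity above: carrying out the Itô/Stratonovich bookkeeping so that, after the concavity correction, the surviving coefficient of $\langle QX_t,X_t\rangle_{E_0}$ is exactly $\mu=(\eta-2)-\alpha(\eta-1)$ — which is what makes~\eqref{eq:alpha_main} the sharp condition, and in particular forces $\Tr Q>\lVert Q\rVert$; a secondary technical point is proving the blow-up alternative in the $E_0$-norm (rather than $E_1$).
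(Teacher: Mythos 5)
Your proposal is correct and follows essentially the same route as the paper: local maximal solution via cutoff, Galerkin approximation and gluing, then a radially concave Lyapunov function $\approx\log\lVert x\rVert_{E_0}$ whose generator drift you compute explicitly — and your claimed coefficient $c_N^2(1-\alpha)\bigl(\Tr Q+\mu\lVert Q\rVert\bigr)$ with $\mu=(\eta-2)-\alpha(\eta-1)$ agrees exactly with the paper's $\hat\alpha\Tr Q+\hat\beta\lVert Q\rVert$ and yields the same sharp condition \eqref{eq:alpha_main}. The only (immaterial) differences are the factor $2$ in your $V$ and your additive bound $\E[V(X_{t\wedge\rho})]\le V(x_0)+Ct$ in place of the paper's multiplicative Gronwall step $\mathcal{L}V\le cV$.
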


The rigorous statement for the Euler equations, i.e.\ Theorem \ref{thm:ApplicationToEuler}, is an application of Theorem \ref{thm:main}.

\section{Local well-posedness for maximal solutions} \label{section:local_maximal_solution}
In this section, we show existence and uniqueness for a local maximal solution of \eqref{eq:SDE_Ito}. Here, we do not take advantage of the specific shape of the noise. The construction of the solution is made in multiple steps. For $M>0$, we introduce a smooth non-increasing cutoff function $\theta_M:[0,\infty) \to [0,1]$ such that
\begin{align*}
    \theta_M (s) =\begin{cases}
        1 &\text{for }s<M \\
        0 & \text{for }s>2M.
    \end{cases}
\end{align*}
As a way to localize the equation, we multiply both drift and diffusion coefficients in \eqref{eq:SDE_Ito} by $\theta_M(\lVert \cdot \rVert_{E_0})$, obtaining a class of cutoff SDEs parametrized by $M$
\begin{align}\label{eq:SDE_Ito_cutoff}
\begin{split}
dX^{M}_t = &\theta_M(\lVert X^{M}_t \rVert_{E_0})\left( b(X^M_t) +\frac12 \left(\sigma\cdot\nabla\sigma\right)( X^M_t) \right)dt +\theta_M(\lVert X^{M}_t \rVert_{E_0})\sum_{k\ge1}\sigma_k(X^M_t)dW^k_t.
\end{split}
\end{align}
The notion of probabilistically weak and strong solutions to \eqref{eq:SDE_Ito_cutoff} is completely analogous to Definition \ref{def:strong}, meaning that we require solutions to have paths in $C([0,T);E_0)\cap L^\infty([0,T);E_1)$ $\Prob$-a.s..
In Subsection \ref{subsection:Galerkin}, we consider finite-dimensional Galerkin approximations of \eqref{eq:SDE_Ito_cutoff} and we show uniform bounds that imply the precompactness of such Galerkin approximations. In Subsection \ref{subsection:passage_to_the_limit} we pass to the limit, showing existence of weak solutions for \eqref{eq:SDE_Ito_cutoff}. Finally, in Subsection \ref{subsecton:local_solution}, taking advantage of a uniqueness result for solutions of \eqref{eq:SDE_Ito_cutoff}, we are able to unambiguously construct local maximal solutions to \eqref{eq:SDE_Ito}, by gluing solution of \eqref{eq:SDE_Ito_cutoff} associated to different values of $M$.

\subsection{Precompactness of approximate solutions with cutoff} \label{subsection:Galerkin}

In this subsection, we show precompactness for a family of Galerkin approximations of the equation with cutoff.

For fixed $M>0$, consider the following finite-dimensional Galerkin approximations with cutoff on $E^n$ (see Assumption \ref{Assump:Projection}) to the SDE~\eqref{eq:SDE_Ito_cutoff}, 

\begin{equation} \label{eq:SDE_Galerkin}
    \begin{cases}
    dX^{n,M}_t=\theta_M(\lVert X^{n,M}_t \rVert_{E_0})\hat b^n(X^{n,M}_t)dt+\theta_M(\lVert X^{n,M}_t \rVert_{E_0})\sigma^n(X^{n,M}_t)dW^{(n)}_t, \\
    X^{n,M}_0=\Pi_n x_0, 
    \end{cases}
\end{equation}
where
\begin{align} \label{eq:Galerkin_drift}
    \hat b^n(x)&= b^n(x) + \frac 12 \left(\sigma\cdot\nabla\sigma\right)^n(x)
\end{align}
and $b^n$, $\left( \sigma \cdot \nabla \sigma \right)^n$ and $\sigma^n$ are defined in \eqref{eq:b^n&sigma_n}, $W^{(n)}=(W^1,\,\cdots, W^n)^T$ and $x_0\in E_1$ is deterministic. Since $\hat b^n, \sigma^n$ are locally Lipschitz (by Assumptions \ref{Assump:drift}-(c) and Lemma \ref{lemma:noise_properties}), the local existence of a unique solution $X^{n,M}_t$ to~\eqref{eq:SDE_Galerkin} is a classical result. Moreover, thanks to the cutoff function and the equivalence of norms in finite dimensional spaces, the solutions are actually global in time.
\begin{lemma} \label{Lemma:Uniform_Infinity_Bound}
    For every $T>0$ and $M>0$, there exists $C_{T,M}>0$ such that
\begin{equation}\label{eq:Uniform_Infinity_Bound}
    \sup_n \E \sup_{t\in[0,T]}  \lVert X^{n,M}_t \rVert_{E_1}^2 \le C_{T,M}.
\end{equation}
\end{lemma}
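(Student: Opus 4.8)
The plan is to apply Itô's formula to $\lVert X^{n,M}_t \rVert_{E_1}^2$, estimate the drift and martingale parts using the structural assumptions on $b^n$ and $\sigma^n$ (together with the cutoff), and then close with Gronwall's inequality plus the Burkholder–Davis–Gundy inequality. Since the solution $X^{n,M}$ lives in the finite-dimensional space $E^n$, all objects are genuinely finite-dimensional and smooth, so Itô's formula applies without subtlety; the only care needed is that the constants emerging from the estimates be uniform in $n$, which is exactly what Assumptions \ref{Assump:drift}-(b) (first line), \ref{Assump:sigma}-(b), and \ref{Assump:Projection} are designed to provide.

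\smallskip
\noindent\textbf{Key steps.} First I would write, via Itô's formula applied to $v \mapsto \lVert v \rVert_{E_1}^2$ on $E^n$,
\begin{align*}
\lVert X^{n,M}_t \rVert_{E_1}^2 = \lVert \Pi_n x_0 \rVert_{E_1}^2 &+ 2\int_0^t \theta_M(\lVert X^{n,M}_r \rVert_{E_0}) \langle \hat b^n(X^{n,M}_r), X^{n,M}_r \rangle_{E_1}\, dr \\
&+ \int_0^t \theta_M^2(\lVert X^{n,M}_r \rVert_{E_0}) \sum_{k=1}^n \lVert \sigma^n_k(X^{n,M}_r) \rVert_{E_1}^2\, dr \\
&+ 2\int_0^t \theta_M(\lVert X^{n,M}_r \rVert_{E_0}) \sum_{k=1}^n \langle \sigma^n_k(X^{n,M}_r), X^{n,M}_r \rangle_{E_1}\, dW^k_r.
\end{align*}
For the drift term, I split $\hat b^n = b^n + \tfrac12 (\sigma\cdot\nabla\sigma)^n$ and bound $\langle b^n(x),x\rangle_{E_1} \le C + g(\lVert x\rVert_{E_0})\lVert x\rVert_{E_1}^2$ by the first line of Assumption \ref{Assump:drift}-(b), and $\langle (\sigma\cdot\nabla\sigma)^n(x), x\rangle_{E_1} \le f(\lVert x\rVert_{E_0})\lVert x\rVert_{E_1}^2$ by the first line of Assumption \ref{Assump:sigma}-(b); the martingale-bracket integrand is controlled by the second line of Assumption \ref{Assump:sigma}-(b). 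Crucially, on the support of $\theta_M$ one has $\lVert X^{n,M}_r \rVert_{E_0} \le 2M$, so $g(\lVert X^{n,M}_r\rVert_{E_0}) \vee f(\lVert X^{n,M}_r\rVert_{E_0}) \le g(2M)\vee f(2M) =: K_M$ using monotonicity of $g,f$. Thus the whole drift is bounded by $C_M(1 + \lVert X^{n,M}_r\rVert_{E_1}^2)$ with $C_M$ independent of $n$, and $\lVert \Pi_n x_0\rVert_{E_1}^2 \le C\lVert x_0\rVert_{E_1}^2$ uniformly in $n$ by Assumption \ref{Assump:Projection}. Taking $\sup_{t\le T}$ and expectations, the martingale term is handled by Burkholder–Davis–Gundy: its $L^1$-norm is bounded by $C\,\E\big(\int_0^T \theta_M^2 \sum_k \langle \sigma^n_k, X^{n,M}_r\rangle_{E_1}^2\, dr\big)^{1/2}$, which by Cauchy–Schwarz in $k$ and the second line of Assumption \ref{Assump:sigma}-(b) is at most $C\,\E\big(\sup_{t\le T}\lVert X^{n,M}_t\rVert_{E_1}^2 \int_0^T C_M(1+\lVert X^{n,M}_r\rVert_{E_1}^2)\,dr\big)^{1/2}$; an application of Young's inequality $ab \le \tfrac12 a^2 \varepsilon^{-1} + \tfrac12 b^2 \varepsilon$ absorbs the $\sup$ into the left-hand side with a small coefficient. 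One is then left with
\[
\E \sup_{t\le T}\lVert X^{n,M}_t\rVert_{E_1}^2 \le C_{T,M}\Big(1 + \int_0^T \E\sup_{s\le r}\lVert X^{n,M}_s\rVert_{E_1}^2\, dr\Big),
\]
and Gronwall's lemma yields the claimed bound $C_{T,M}$ uniform in $n$.

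\smallskip
\noindent\textbf{Main obstacle.} The technical point requiring most care is the rigorous application of BDG and the absorption argument: a priori one does not know $\E\sup_{t\le T}\lVert X^{n,M}_t\rVert_{E_1}^2 < \infty$, so one must first run the argument with a stopping time $\tau_R^n = \inf\{t : \lVert X^{n,M}_t\rVert_{E_1} \ge R\}$ (finite a.s.\ since, for fixed $n$, the process is a continuous semimartingale in $E^n$), obtain a bound uniform in $R$ via Gronwall, and then let $R\to\infty$ by monotone convergence. A secondary point is ensuring the constant from the absorption step does not degrade with $n$ — but since every estimate used is either $n$-free (Assumptions \ref{Assump:drift}-(b), \ref{Assump:sigma}-(b)) or controlled by the uniform projection bound (Assumption \ref{Assump:Projection}), and BDG constants are universal, this goes through. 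No use of the specific rotational form \eqref{eq:sigma} of $\sigma$ is needed here; that enters only in the global estimates of Section \ref{section:global_solution}.
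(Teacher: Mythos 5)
Your proposal follows essentially the same route as the paper's proof: Itô's formula for $\lVert X^{n,M}_t\rVert_{E_1}^2$, the uniform-in-$n$ bounds from Assumptions \ref{Assump:drift}-(b), \ref{Assump:sigma}-(b) and \ref{Assump:Projection} combined with the cutoff to freeze $g,f$ at level $2M$, Burkholder--Davis--Gundy plus Young to absorb the supremum of the martingale term, and Gronwall. Your additional remark about first localizing with the exit times $\tau_R^n$ to justify the absorption step is a sound technical refinement of a point the paper leaves implicit, not a different argument.
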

\begin{proof}[Proof of Lemma~\ref{Lemma:Uniform_Infinity_Bound}]
For ease of applying It\^o's lemma, we endow the $E_n$-space with the $E_1$-scalar product for every $n\in \mathbb{N}$. By the It\^o formula, $\lVert X^{n,M} \rVert_{E_1}^2$ has the following stochastic differential 
\begin{align}\label{eq:Energy_differential}
\begin{split}
    d\lVert X^{n,M}_t \rVert^2_{E_1}=
    & \,2\theta_M(\lVert X^{n,M}_t \rVert_{E_0})\langle X^{n,M}_t, b^n(X^{n,M}_t)\rangle_{E_1} dt\\
    & + \theta_M(\lVert X^{n,M}_t \rVert_{E_0}) \langle X^{n,M}_t, \left(\sigma\cdot \nabla \sigma\right)^n(X^{n,M}_t) \rangle_{E_1} dt\\
    &+ \theta_M(\lVert X^{n,M}_t \rVert_{E_0})\sum_{k=1}^n  \rVert \sigma^n_k(X^{n,M}_t) \rVert^{2}_{E_1}dt\\
    &+ 2\theta_M(\lVert X^{n,M}_t \rVert_{E_0})\sum_{k=1}^n \langle X^{n,M}_t, \sigma^n_k \rangle_{E_1} dW^k_t.
\end{split}
\end{align}
We write \eqref{eq:Energy_differential} in integral form, we take the supremum in time and expectation and we analyse the right hand side term by term. 
For the first term in \eqref{eq:Energy_differential}, we exploit Assumption \ref{Assump:drift}-(b), obtaining
\begin{align}\label{eq:first_estimate}
\begin{split}
    \E\sup_{t\in[0,S]} &\int_0^t 2\theta_M(\lVert X^{n,M}_r\rVert_{E_0})\langle X^{n,M}_r, b^n(X^{n,M}_r)\rangle_{E_1}\,dr\\ 
    &\le \E\int_0^S 2\theta_M(\lVert X^{n,M}_r\rVert_{E_0}) (C+g(\lVert X^{n,M}_r\rVert_{E_0}) \lVert X^{n,M}_r \rVert_{E_1}^2) \,dr\\
    &\le CS +C_M\E \int_0^S \lVert X^{n,M}_r \rVert_{E_1}^2 \,dr \le CS  +C_M\int_0^S \E \sup_{t\in[0,r]}\lVert  X^{n,M}_t \rVert_{E_1}^2\,dr.
\end{split}
\end{align}
In the second term and third term in \eqref{eq:Energy_differential}, exploiting Assumptions \ref{Assump:sigma}-(b), we get
\begin{align} \label{eq:second_estimate}
\begin{split}
    \E\sup_{t\in[0,S]} &\int_0^t \theta_M(\lVert X^{n,M}_r \rVert_{E_0})  \left(\langle X^{n,M}_r, \left(\sigma\cdot \nabla \sigma\right)^n(X^{n,M}_r) \rangle_{E_1} + \sum_{k=1}^n\rVert \sigma^n_k(X^{n,M}_r) \rVert^{2}_{E_1} \right)\,dr\\ 
    &\le 2 \,\E\int_0^S \theta_M(\lVert X^{n,M}_r \rVert_{E_0}) f(\lVert X^{n,M}_r \rVert_{E_0}) \lVert X^{n,M}_r \rVert^2_{E_1}  \,dr\\
    &\le  C_M \E \int_0^S \lVert X^{n,M}_r \rVert_{E_1}^2 \,dr \le C_M  \int_0^S \E \sup_{t\in[0,r]}\lVert  X^{n,M}_t \rVert_{E_1}^2\,dr.
\end{split}
\end{align}
Finally, to control the stochastic integral, we employ Burkholder-Davis-Gundy inequality, Assumption \ref{Assump:sigma}-(b) and Young inequality
\begin{align} \label{eq:last_estimate}
\begin{split}
    \E&\sup_{t\in[0,S]} \int_0^t 2\theta_M(\lVert X^{n,M}_r \rVert_{E_0})\sum_{k=1}^n \langle X^{n,M}_r, \sigma_k( X^{n,M}_r ) \rangle_{E_1} dW^k_r \\
    &\le C\E \left( \int_0^S \theta^2_M(\lVert X^{n,M}_r \rVert_{E_0})\sum_{k=1}^n \langle X^{n,M}_r, \sigma_k( X^{n,M}_r ) \rangle_{E_1}^2 dr\right)^{\frac12}\\
    &\le C\E \left( \int_0^S \theta^2_M(\lVert X^{n,M}_r \rVert_{E_0}) \lVert X^{n,M}_r \rVert^2_{E_1}\sum_{k=1}^n \lVert \sigma_k( X^{n,M}_r )\rVert^2_{E_1}   dr\right)^{\frac12}\\
    &\le C\E \left( \int_0^S \theta^2_M(\lVert X^{n,M}_r \rVert_{E_0}) f(\lVert  X^{n,M}_r \rVert_{E_0} ) \lVert X^{n,M}_r \rVert^4_{E_1}  dr\right)^{\frac12}\\
    &\le C_M\E \left( \int_0^S  \lVert X^{n,M}_r \rVert^4_{E_1}  dr\right)^{\frac12}\\
    &\le C_M\E \left( \sup_{t\in[0,S]}\lVert  X^{n,M}_t \rVert_{E_1}^2\int_0^S \lVert X^{n,M}_r \rVert^2_{E_1} dr\right)^{\frac12}\\
    &\le \frac12 \E \sup_{t\in[0,S]}\lVert  X^{n,M}_t \rVert_{E_1}^2 + C_M\E \int_0^S  \lVert X^{n,M}_r \rVert^2_{E_1} dr\\
    &\le \frac12 \E \sup_{t\in[0,S]}\lVert  X^{n,M}_t \rVert_{E_1}^2 + C_M \int_0^S \E \sup_{t\in[0,r]}\lVert  X^{n,M}_t \rVert_{E_1}^2\,dr.
\end{split}
\end{align}
Denoting $Y_t^{n,M}\coloneqq \E \sup_{s\in[0,t]}\lVert  X^{n,M}_s \rVert_{E_1}^2$ and combining estimates \eqref{eq:first_estimate}-\eqref{eq:last_estimate}, we end up with
\begin{equation*}
    Y_t^{n,M}\le Y_0^{n,M} +Ct + C_M \int_0^t Y_r^{n,M} \, dr.
\end{equation*}
Therefore, by Gronwall inequality and Assumption \ref{Assump:Projection},
\begin{align*}
     \E \sup_{s\in[0,T]}\lVert  X^{n,M}_s \rVert_{E_1}^2 = Y_T^{n,M}\le \left( Y_0^{n,M}+ CT\right) e^{C_M T} = \left( \E \lVert  X^{n,M}_0 \rVert_{E_1}^2+ CT\right) e^{C_M T}\le C_{M,T}
\end{align*}
which conclude the proof.
\end{proof}

The next result provides a uniform bound for $X^{n,M}$ in $C^{0,\alpha}([0,T];E_{-1})$ with $\alpha\in (0,1/2)$. Here, $C^{0,\alpha}([0,T];E_{-1})$ refers to the H\"{o}lder spaces defined by the norm
\begin{align*}
&\|f\|_{C^{0,\alpha}(E_{-1})} := \|f\|_{C(E_{-1})} + [f]_{C^{0,\alpha}(E_{-1})} := \sup_{t\in[0,T]}\|f(t)\|_{E_{-1}} 
+
\sup_{t,s\in[0,T]; t\neq s}\frac{\lVert f(t) -f(s)\rVert_{E_{-1}}}{|t-s|^{\alpha}}.
\end{align*}

\begin{lemma} \label{Lemma:Uniform_Holder_Bound}
Let $\alpha\in (0,1/2)$. For every $T>0$ and $M>0$, there exists $C_{T,M}>0$ such that
\begin{equation}\label{eq:Uniform_Holder_Bound}
\sup_n\E \lVert X^{n,M} \rVert_{C^{0,\alpha}(E_{-1})} \le C_{T,M}. 
\end{equation}
\end{lemma}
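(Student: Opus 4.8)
The plan is to read off the bound from the integral form of the Galerkin system \eqref{eq:SDE_Galerkin}, estimating the drift increment and the stochastic increment of $X^{n,M}_t-X^{n,M}_s$ separately, and then invoking the moment version of Kolmogorov's continuity criterion with a sufficiently large exponent~$p$. The key point throughout is that every constant must come from a bound on $E_0$-balls (valid for $b$, for $\sigma\cdot\nabla\sigma$, and for $\sum_k\|\sigma_k\|^2$) composed with a projection $\Pi_n$ that is a contraction in $E_{-1}$, so that all constants are independent of $n$.

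First I would treat the drift. Writing the Bochner integral $\int_s^t\theta_M(\|X^{n,M}_r\|_{E_0})\hat b^n(X^{n,M}_r)\,dr$ and using $\|\Pi_n v\|_{E_{-1}}\le\|v\|_{E_{-1}}$ together with the fact that $\theta_M(\|x\|_{E_0})=0$ for $\|x\|_{E_0}\ge 2M$, the integrand is bounded in $E_{-1}$ by $\sup_{\|x\|_{E_0}\le 2M}\bigl(\|b(x)\|_{E_{-1}}+\tfrac12\bigl\|\sum_{k=1}^n(\sigma\cdot\nabla\sigma)_k(x)\bigr\|_{E_{-1}}\bigr)$, which is finite and $n$-independent by Assumption~\ref{Assump:drift}-(a), the continuous embedding $E_0\hookrightarrow E_{-1}$, and \eqref{eq:sigma_property_3} (applied to the partial sums defining $(\sigma\cdot\nabla\sigma)^n$). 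Hence $\bigl\|\int_s^t\cdots dr\bigr\|_{E_{-1}}\le C_M|t-s|$, almost surely and uniformly in $n$.

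Next the stochastic integral $I^{n,M}_t:=\int_0^t\theta_M(\|X^{n,M}_r\|_{E_0})\sigma^n(X^{n,M}_r)\,dW^{(n)}_r$. For a fixed $p>2$ I would apply the Burkholder--Davis--Gundy inequality in the Hilbert space $E_{-1}$:
\[
\E\bigl\|I^{n,M}_t-I^{n,M}_s\bigr\|_{E_{-1}}^p \le C_p\,\E\biggl(\int_s^t\theta_M^2(\|X^{n,M}_r\|_{E_0})\sum_{k=1}^n\bigl\|\Pi_n\sigma_k(X^{n,M}_r)\bigr\|_{E_{-1}}^2\,dr\biggr)^{p/2}.
\]
Using $\|\Pi_n v\|_{E_{-1}}\le\|v\|_{E_{-1}}\le C\|v\|_{E_0}$, the bound $\sum_{k\ge1}\|\sigma_k(x)\|_{E_0}^2\le h(\|x\|_{E_0})$ from \eqref{eq:sigma_property_1}, and once more the support property of $\theta_M$ with $h$ non-decreasing, the bracket is $\le \bigl(C\,h(2M)\bigr)^{p/2}|t-s|^{p/2}$, so $\E\|I^{n,M}_t-I^{n,M}_s\|_{E_{-1}}^p\le C_{M,p}\,|t-s|^{p/2}$ uniformly in $n$. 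Combined with the drift estimate and $|t-s|^p\le T^{p/2}|t-s|^{p/2}$ on $[0,T]$, this gives $\E\|X^{n,M}_t-X^{n,M}_s\|_{E_{-1}}^p\le C_{M,T,p}\,|t-s|^{p/2}$ for all $s,t\in[0,T]$.

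Finally, since $p/2>1$, the Kolmogorov continuity theorem in moment form yields $\E\|X^{n,M}\|_{C^{0,\gamma}(E_{-1})}^p\le C_{M,T,p}\bigl(1+\|x_0\|_{E_{-1}}^p\bigr)$ for every $\gamma<1/2-1/p$, with a constant depending only on $p,\gamma,T$ and the above increment bound, hence independent of $n$; here $\|X^{n,M}_0\|_{E_{-1}}=\|\Pi_n x_0\|_{E_{-1}}\le\|x_0\|_{E_{-1}}$ controls the sup-norm part of the $C^{0,\gamma}$-norm. Given $\alpha\in(0,1/2)$ I choose $p$ with $\alpha<1/2-1/p$ and conclude via Jensen's inequality that $\sup_n\E\|X^{n,M}\|_{C^{0,\alpha}(E_{-1})}\le C_{T,M}$. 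I do not expect a genuine obstacle here: the proof is essentially bookkeeping, and the only point requiring care is checking the $n$-uniformity of each constant, which follows from the structural fact noted above.
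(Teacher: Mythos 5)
Your proposal is correct and follows essentially the same route as the paper: the same term-by-term increment estimate $\E\|X^{n,M}_t-X^{n,M}_s\|_{E_{-1}}^p\le C_{M,T,p}|t-s|^{p/2}$ with $n$-uniform constants coming from the cutoff, the $E_{-1}$-contractivity of $\Pi_n$, and the ball-bounds on $b$ and $\sigma$. The only (immaterial) difference is the last step, where you invoke the moment form of Kolmogorov's continuity criterion while the paper uses the fractional Sobolev embedding $W^{\beta,p}([0,T];E_{-1})\hookrightarrow C^{0,\alpha}([0,T];E_{-1})$ with $\alpha<\beta-1/p$; both yield the same threshold $\alpha<1/2-1/p$ and the same conclusion.
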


For the proof, we recall the Sobolev embedding $W^{\beta,p}\hookrightarrow C^{0,\alpha}$, where $1<p<\infty$ and  $\alpha<\beta-1/p$. More precisely, we may apply e.g.~\cite[Theorem B.1.5]{da1996ergodicity} and conclude that 
\begin{equation}
[f]_{C^{0,\alpha}(E_{-1})}^p \le C[f]_{W^{\beta,p}(E_{-1})}^p := C\int_0^T\int_0^T \frac{\|f_t-f_s\|_{E_{-1}}^p}{|t-s|^{1+\beta p}}\,ds\,dt \label{eq:Sobolev_emb_time}
\end{equation}
for a continuous function $f:[0,T]\to E_{-1}$, $1<p<\infty$, and $\alpha<\beta-1/p$.
\begin{proof}
Fix $p>1$. By Jensen inequality
\begin{align*}
    \E \lVert X^{n,M} \rVert_{C^{0,\alpha}(E_{-1})}&=\E \lVert X^{n,M} \rVert_{C(E_{-1})}+\E [X^{n,M} ]_{C^{0,\alpha}(E_{-1})} \\
    &\le \left(\E \lVert X^{n,M} \rVert^2_{C(E_{-1})}\right)^\frac12+\left(\E [X^{n,M} ]^p_{C^{0,\alpha}(E_{-1})}\right)^\frac1p.
\end{align*}
Therefore, thanks to Lemma \ref{Lemma:Uniform_Infinity_Bound} and the continuous embedding of $E_1$ into $E_{-1}$ (Assumption \ref{Assump:Spaces}), it is sufficient to prove
\begin{align*}
    \sup_n\E [X^{n,M} ]^p_{C^{0,\alpha}(E_{-1})} \le C_{T,M}.
\end{align*}
Exploiting \eqref{eq:Sobolev_emb_time}, we are left to show that
\begin{align} \label{eq:Uniform_Fractional_Sobolev}
    \sup_n\E \int_0^T\int_0^T \frac{\|X^{n,M}_t-X^{n,M}_s\|_{E_{-1}}^p}{|t-s|^{1+\beta p}}\,ds\,dt = \sup_n\int_0^T\int_0^T \frac{\E\|X^{n,M}_t-X^{n,M}_s\|_{E_{-1}}^p}{|t-s|^{1+\beta p}}\,ds\,dt \le C_{T,M}.
\end{align}
By writing \eqref{eq:SDE_Galerkin} in integral form, taking the $E_{-1}$ norm and the $p$-power and then expectation, we obtain
\begin{align}
\begin{split}\label{eq:time_cont_bd}
    \E&\|X^{n,M}_t-X^{n,M}_s\|_{E_{-1}}^p \\
    &\le C_p \E \left\Vert \int_s^t \theta_M(\lVert X^{n,M}_r \rVert_{E_0}) b^n(X^{n,M}_r) \,dr \right\Vert^p_{E_{-1}}\\
    &\quad + C_p \E \left\Vert \int_s^t \theta_M(\lVert X^{n,M}_r \rVert_{E_0}) \sum_{k=1}^n \left(\sigma\cdot \nabla\sigma\right)^n_k\left(X^{n,M}_r\right) \,dr \right\Vert^p_{E_{-1}}\\
    &\quad+C_p \E \left\Vert \int_s^t \theta_M(\lVert X^{n,M}_r \rVert_{E_0})\sum_{k=1}^n \sigma^n_k(X^{n,M}_r) dW^k_r \right\Vert^p_{E_{-1}}.
\end{split}
\end{align}
We analyse the above right hand side term by term. In the first term in the right hand side of \eqref{eq:time_cont_bd}, exploiting the orthogonality in $E_{-1}$ of the $\Pi_n$ projection and the boundedness of $b$ on $E_0$ balls,
\begin{align*}
    \E \left\Vert \int_s^t \theta_M(\lVert X^{n,M}_r \rVert_{E_0}) b^n(X^{n,M}_r) \,dr \right\Vert^p_{E_{-1}}
    &=\E \left\Vert \int_s^t \theta_M(\lVert X^{n,M}_r \rVert_{E_0})\Pi_n b(X^{n,M}_r) \,dr \right\Vert^p_{E_{-1}}\\
    &\le \E \left( \int_s^t \theta_M(\lVert X^{n,M}_r \rVert_{E_0})\lVert b(X^{n,M}_r)\rVert_{E_{-1}} \,dr \right)^p\\
    &\le C_M |t-s|^p.
\end{align*}
In the second term in the right hand side of \eqref{eq:time_cont_bd}, thanks to the orthogonality of $\Pi_n$ and \eqref{eq:sigma_property_3}, we have 
\begin{align*} 
    \E &\left\Vert \int_s^t \theta_M(\lVert X^{n,M}_r \rVert_{E_0}) \sum_{k=1}^n \left(\sigma\cdot \nabla\sigma\right)^n_k\left(X^{n,M}_r\right) \,dr \right\Vert^p_{E_{-1}}\\
    &\qquad=\E \left\Vert \int_s^t \theta_M(\lVert X^{n,M}_r \rVert_{E_0}) \sum_{k=1}^n \Pi_n \left(\sigma\cdot \nabla\sigma\right)_k\left(X^{n,M}_r\right) \,dr \right\Vert^p_{E_{-1}}\\
    &\qquad\le\E \left( \int_s^t \theta_M(\lVert X^{n,M}_r \rVert_{E_0}) \sum_{k=1}^\infty \left\Vert \left(\sigma\cdot \nabla\sigma\right)_k\left(X^{n,M}_r\right) \right\Vert_{E_{-1}} \,dr\right)^p \\
    &\qquad \le C_M |t-s|^p.
\end{align*}
Lastly, for the stochastic integral term in \eqref{eq:time_cont_bd} we employ Burkolder-Davis-Gundy inequality and exploit the fact that $\sigma$ is bounded on $E_0$ balls and $Q$ is trace-class
\begin{align*}
    &\E \left\Vert \int_s^t \theta_M(\lVert X^{n,M}_r \rVert_{E_0})\sum_{k=1}^n \sigma^n_k(X^{n,M}_r) dW^k_r \right\Vert^p_{E_{-1}}\\ 
    &\qquad\qquad= \E \left\Vert \int_s^t \theta_M(\lVert X^{n,M}_r \rVert_{E_0})\sum_{k=1}^n \Pi_n\sigma_k(X^{n,M}_r) dW^k_r \right\Vert^p_{E_{-1}}\\
    &\qquad\qquad\le C_p \,\E \left( \int_s^t \theta^2_M(\lVert X^{n,M}_r \rVert_{E_0})\sum_{k=1}^\infty \lVert\sigma_k(X^{n,M}_r)\rVert^2_{E_{-1}} \,dr \right)^\frac p2\\
    & \qquad\qquad \le C_{p,M} |t-s|^\frac p2.
\end{align*}
Combining all estimates, we end up with 
\begin{align*}
    \E&\|X^{n,M}_t-X^{n,M}_s\|_{E_{-1}}^p \le C_{p,M,T} |t-s|^\frac p2
\end{align*}
which implies the uniform bound in \eqref{eq:Uniform_Fractional_Sobolev} as soon as $\beta<\frac12$. Given $\alpha\in (0,1/2)$, we can always choose $\beta, p$ so that $\beta<1/2$, $1<p<\infty$, and $\alpha<\beta-1/p$. Therefore, the Sobolev embedding completes the proof of the lemma.
\end{proof}

\subsection{Global existence for the equation with cutoff} \label{subsection:passage_to_the_limit}

In this subsection, we show that every limit point of the Galerkin approximations converge to a solution to the equation with cutoff, thus providing weak existence for such equation. 

Recall the Aubin-Lions-Simon theorem (see~\cite[Corollary 9]{Si86}), which shows that, under the Assumption~\ref{Assump:Spaces} when $\alpha\in (0,1)$,
\[
L^\infty([0,T];E_1)\cap C^{0, \alpha}([0,T];E_{-1})
\subset\subset C([0,T];E_0).
\] 
Then, the set of the laws of $X^{n,M}$ is tight in $C([0,T];E_0)$ as a consequence of Lemmas \ref{Lemma:Uniform_Infinity_Bound} and \ref{Lemma:Uniform_Holder_Bound}, and then the set of laws $\{(X^{n.M},W )\}_{n\in\mathbb{N}}$ is tight in the product Polish space $C\left([0,T];E_0\right)\times C\left([0,T];E_0\right)$, as a consequence of the tightness of the two components. Applying Prokhorov's theorem, we conclude that the set of the laws $\{(X^{n,M},W )\}_{n\in\mathbb{N}}$ is a precompact set in the topology of weak convergence.

We will show that a weak limit point of $\{(X^{n,M},W )\}_{n\in\mathbb{N}}$ is a weak solution to \eqref{eq:SDE_Ito_cutoff}. Let us take a weakly convergent subsequence and still denote it by $\{(X^{n,M},W )\}_{n\in\mathbb{N}}$ for brevity. By the Skorokhod theorem (see e.g.~\cite[Theorem 2.4]{da2014stochastic}), there exist a complete probability space $(\widetilde\Omega,\widetilde{\mathcal{A}},\widetilde{\mathbb P})$ and $C\left([0,T];E_0\right)\times C\left([0,T];E_0\right)$-valued random variables $\{(\tilde X^{n,M},\tilde W^n)\}_{n\in\N}$, $(\tilde X^M, \tilde W)$ on $(\widetilde\Omega,\widetilde{\mathcal{A}},\widetilde{\mathbb P})$, such that $(\tilde X^{n,M},\tilde W^n)$ has the same law as $( X^{n,M},W^n)$ for every $n\in\mathbb{N}$, and the sequence $(\tilde X^{n,M},\tilde W^n)$ converges to $(\tilde X^M, \tilde W)$ in $C\left([0,T];E_0\right)\times C\left([0,T];E_0\right)$ $\widetilde \Prob$-a.s.. 

For technicalities, we denote the filtration generated by $\tilde X^M$, $\tilde W$ and $\widetilde \Prob$-null sets by $(\mathcal{\widetilde G}_t)_{t\in[0,T]}$, and we define $\mathcal{\widetilde F}_t \coloneqq \cap_{s>t} \mathcal{\widetilde G}_s$; clearly the filtration $\mathcal{\widetilde F}_t$ satisfies the standard assumption. The filtrations $(\mathcal{\widetilde G}^n_t)_{t\in[0,T]}$ and $(\mathcal{\widetilde F}^n_t)_{t\in[0,T]}$ for $\tilde X^{n,M},\tilde W^n$ are defined similarly. For notational convenience we dropped the dependence on $M$ in these filtrations (and in $\tilde W^n, \tilde W$). With regard to $\tilde W$, $\tilde W^n$, and $\{(\tilde X^{n,M},\tilde W^n)\}_{n\in\N}$, we have the next two technical and classical lemmas, we omit their proofs as they are completely analogous to those of e.g. \cite[Lemmas 4.1 and 4.2]{BMX2023}.

\begin{lemma}\label{BM}
For every $n\in\mathbb{N}_+$, $\tilde W^n$ is an $(\mathcal{\widetilde F}^n_t)_{t\in[0,T]}$-adapted $Q$-Wiener process, and the limit process $\tilde W$ is an $(\mathcal{\widetilde F}_t)_{t\in[0,T]}$-adapted $Q$-Wiener process. In particular, 
\begin{align*}
    &\tilde W^n=\sum_{k=1}^\infty \sqrt{\lambda_k}w_k \tilde W^n_k,
    &\tilde W=\sum_{k=1}^\infty \sqrt{\lambda_k}w_k \tilde W_k,
\end{align*}
where $\tilde W^n_k$ are $(\mathcal{\widetilde F}^n_t)_{t\in[0,T]}$-adapted mutually independent one-dimensional Brownian motions and $\tilde W_k$ are $(\mathcal{\widetilde F}_t)_{t\in[0,T]}$-adapted mutually independent one-dimensional Brownian motions. Moreover, for every $k\in\mathbb{N}_+$ $\tilde W^n_k \to \tilde W_k$ in $C([0,T],\R)$ $\tilde P$-a.s..
\end{lemma}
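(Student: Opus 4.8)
The plan is to follow the classical route for identifying limits obtained via a Skorokhod representation: first transfer the $Q$-Wiener property from the original space to each prelimit pair $(\tilde X^{n,M},\tilde W^n)$ by equality in law, then pass to the limit using the almost sure convergence in $C([0,T];E_0)$, and finally read off the real-valued components by projecting onto the eigenbasis $\{w_k\}$.

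\textbf{Step 1 (the prelimit processes).} On the original probability space the Galerkin solution $X^{n,M}$ of \eqref{eq:SDE_Galerkin} is adapted to the completed filtration generated by $W$, since \eqref{eq:SDE_Galerkin} has locally Lipschitz coefficients and therefore a strong solution. Hence $W$ is a $Q$-Wiener process with respect to the filtration generated by the pair $(X^{n,M},W)$. The defining property ``$W_t-W_s$ is independent of $\sigma(X^{n,M}_r,W_r:r\le s)$ and is Gaussian with covariance $(t-s)Q$'' can be encoded purely through expectations of bounded continuous functionals of the paths, e.g.\ via $\E\bigl[e^{i\langle\xi,W_t-W_s\rangle_{E_0}}\,\Phi\bigl((X^{n,M}_r,W_r)_{r\le s}\bigr)\bigr]=e^{-\frac12(t-s)\langle Q\xi,\xi\rangle_{E_0}}\,\E[\Phi]$ for all $\xi\in E_0$ and all bounded continuous $\Phi$. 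Since $(\tilde X^{n,M},\tilde W^n)$ has the same law as $(X^{n,M},W)$, these identities persist, so $\tilde W^n$ is a $Q$-Wiener process with respect to the filtration it generates jointly with $\tilde X^{n,M}$, hence with respect to $(\mathcal{\widetilde G}^n_t)$ and, by right-continuity of Wiener filtrations, with respect to $(\mathcal{\widetilde F}^n_t)$.

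\textbf{Step 2 (the limit process).} Because $\tilde W^n\to\tilde W$ in $C([0,T];E_0)$ $\widetilde\Prob$-a.s., for every finite set of times the increments of $\tilde W^n$ converge a.s.\ in $E_0$ to those of $\tilde W$; each increment family is jointly Gaussian with the covariance dictated by $Q$, so the limit is as well, giving independent increments with the correct law for $\tilde W$, and sample-path continuity is automatic. To obtain that $\tilde W$ is a Wiener process for $(\mathcal{\widetilde F}_t)$, which also carries the $\tilde X^M$ information, one passes to the limit in the characteristic-functional identity of Step 1: using the a.s.\ convergence of $(\tilde X^{n,M},\tilde W^n)$ to $(\tilde X^M,\tilde W)$ in the product space together with bounded convergence, and a standard approximation of functionals of $(\tilde X^M_r,\tilde W_r)_{r\le s}$, one gets that $\tilde W_t-\tilde W_s$ is independent of $\mathcal{\widetilde G}_s$, hence of $\mathcal{\widetilde F}_s$.

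\textbf{Step 3 (one-dimensional components).} For each $k$ with $\lambda_k>0$ set $\tilde W^n_k:=\lambda_k^{-1/2}\langle\tilde W^n,w_k\rangle_{E_0}$ and $\tilde W_k:=\lambda_k^{-1/2}\langle\tilde W,w_k\rangle_{E_0}$ (the indices with $\lambda_k=0$ are immaterial and can be completed by auxiliary independent Brownian motions). Being continuous linear images of $Q$-Wiener processes along eigenvectors of $Q$, these are one-dimensional $(\mathcal{\widetilde F}^n_t)$- resp.\ $(\mathcal{\widetilde F}_t)$-Brownian motions, and their mutual independence across $k$ follows from $\langle Qw_j,w_k\rangle_{E_0}=\lambda_k\delta_{jk}$ together with the joint Gaussianity of the increments. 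The expansions $\tilde W^n=\sum_k\sqrt{\lambda_k}w_k\tilde W^n_k$ and $\tilde W=\sum_k\sqrt{\lambda_k}w_k\tilde W_k$ converge in $C([0,T];E_0)$ (indeed in $C([0,T];E_1)$, by Assumption \ref{Assump:WienerProcess}), and since convergence in $C([0,T];E_0)$ forces $\langle\tilde W^n,w_k\rangle_{E_0}\to\langle\tilde W,w_k\rangle_{E_0}$ uniformly on $[0,T]$, we obtain $\tilde W^n_k\to\tilde W_k$ in $C([0,T];\R)$ $\widetilde\Prob$-a.s. The main obstacle is the filtration bookkeeping in Steps 1--2, namely ensuring that $\tilde W^n$ and $\tilde W$ are Wiener processes with respect to the enlarged filtrations generated also by the $\tilde X$ components (and that this survives the limit), rather than merely with respect to their own generated filtrations; the remaining computations are routine and parallel \cite[Lemmas 4.1--4.2]{BMX2023}.
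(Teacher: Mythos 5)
Your argument is correct and follows the same standard route that the paper relies on: it omits the proof of this lemma entirely, referring to \cite[Lemmas 4.1 and 4.2]{BMX2023}, which proceed exactly as you do (transfer of the characteristic-functional identity by equality in law, passage to the limit by a.s.\ convergence and bounded convergence, then projection onto the eigenbasis of $Q$ and the routine extension to the right-continuous augmented filtrations). No discrepancy to report.
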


\begin{lemma}\label{WS}
For every $n\in \N$, $(\widetilde \Omega,  \widetilde{\mathcal{A}},(\widetilde{\mathcal{F}}^n_t)_{t}, \widetilde \Prob,\tilde X^{n,M}, \tilde W^n)$ is a weak solution of \eqref{eq:SDE_Galerkin}. 
\end{lemma}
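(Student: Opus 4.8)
The statement asserts that each Skorokhod copy $(\widetilde\Omega, \widetilde{\mathcal A}, (\widetilde{\mathcal F}^n_t)_t, \widetilde{\mathbb P}, \tilde X^{n,M}, \tilde W^n)$ is itself a weak solution of the finite-dimensional Galerkin SDE \eqref{eq:SDE_Galerkin}. Since the Skorokhod representation only guarantees equality of laws on path space — not that the copies solve the same equation — the content is to transfer the SDE identity from the original probability space to the new one. The plan is the standard martingale/identification argument: show that a suitable functional of the paths, which vanishes precisely when the SDE holds, has zero law under $\widetilde{\mathbb P}$ because it has zero law under $\mathbb P$.

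First I would fix $n$ and recall that on the original space $(X^{n,M}, W^n)$ solves \eqref{eq:SDE_Galerkin}, so $\mathbb P$-a.s.
\[
M_t := X^{n,M}_t - \Pi_n x_0 - \int_0^t \theta_M(\lVert X^{n,M}_r\rVert_{E_0})\hat b^n(X^{n,M}_r)\,dr
\]
is a continuous $(\mathcal F^n_t)$-martingale (it equals the stochastic integral $\int_0^t \theta_M(\lVert X^{n,M}_r\rVert_{E_0})\sigma^n(X^{n,M}_r)\,dW^{(n)}_r$), with the prescribed quadratic covariation
\[
\langle M^i, M^j\rangle_t = \int_0^t \theta_M^2(\lVert X^{n,M}_r\rVert_{E_0})\,\bigl(\sigma^n(\sigma^n)^\top\bigr)_{ij}(X^{n,M}_r)\,dr,
\]
and similarly $\langle M^i, W^k\rangle_t = \int_0^t \theta_M(\lVert X^{n,M}_r\rVert_{E_0})\,\sigma^n_{ik}(X^{n,M}_r)\,dr$. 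All three properties — being a martingale, and the two bracket identities — can be encoded as: for every $0\le s\le t\le T$ and every bounded continuous $\Phi$ on $C([0,s];E^n)\times C([0,s];E_0)$, the expectation of $\Phi(X^{n,M}|_{[0,s]}, W^n|_{[0,s]})$ times the increment of the relevant process (i.e.\ $M_t-M_s$, or $M^i_tM^j_t - M^i_sM^j_s - \int_s^t\cdots$, etc.) equals zero. Each such expectation is a continuous functional of the law of $(X^{n,M}, W^n)$ on $C([0,T];E^n)\times C([0,T];E_0)$, provided one first justifies the needed integrability/continuity: here the cutoff $\theta_M$ confines everything to the ball $\overline B_{2M;E_0}$, on which $\hat b^n$, $\sigma^n$ are bounded (Assumption \ref{Assump:drift}-(a), Lemma \ref{lemma:noise_properties}), so the drift and diffusion integrands are bounded, and Lemma \ref{Lemma:Uniform_Infinity_Bound} gives uniform moment bounds to control the stochastic-integral terms.

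Since $(\tilde X^{n,M}, \tilde W^n)$ has the same law as $(X^{n,M}, W^n)$ on the product path space, every one of these zero-expectation identities holds verbatim with tildes. Invoking Lemma \ref{BM} (that $\tilde W^n$ is a genuine $(\widetilde{\mathcal F}^n_t)$-$Q$-Wiener process with components $\tilde W^n_k$), one then reads off that $\tilde M_t := \tilde X^{n,M}_t - \Pi_n x_0 - \int_0^t \theta_M(\lVert\tilde X^{n,M}_r\rVert_{E_0})\hat b^n(\tilde X^{n,M}_r)\,dr$ is a continuous $(\widetilde{\mathcal F}^n_t)$-martingale whose cross-variations with $\tilde W^n_k$ and with itself are exactly those of $\int_0^\cdot \theta_M(\lVert\tilde X^{n,M}_r\rVert_{E_0})\sigma^n(\tilde X^{n,M}_r)\,d\tilde W^{n,(n)}_r$. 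A standard representation lemma for continuous local martingales (e.g.\ via the Kunita--Watanabe / martingale representation argument, matching brackets) then identifies $\tilde M_t$ with that stochastic integral, which is precisely the statement that $(\widetilde\Omega,\widetilde{\mathcal A},(\widetilde{\mathcal F}^n_t)_t,\widetilde{\mathbb P},\tilde X^{n,M},\tilde W^n)$ solves \eqref{eq:SDE_Galerkin}; adaptedness and path regularity are inherited from the a.s.\ convergence in $C([0,T];E_0)$ and from $\tilde X^{n,M}$ living in the finite-dimensional $E^n$.

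The main obstacle is bookkeeping rather than conceptual: one must verify that all the functionals whose expectations are being matched are genuinely continuous and uniformly integrable with respect to the relevant laws, so that equality of laws transfers to equality of expectations — this is where the cutoff $\theta_M$ and the bound from Lemma \ref{Lemma:Uniform_Infinity_Bound} do the work — and one must handle the passage between the Itô integral on the original space and its reconstruction on the copy (the bracket-matching step). As the text notes, this is entirely parallel to \cite[Lemmas 4.1 and 4.2]{BMX2023}, so I would carry it out by citing that argument and only indicating the role of the cutoff and the Galerkin truncation; no new idea is needed.
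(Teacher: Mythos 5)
Your proposal is correct and is exactly the standard martingale-problem/law-transfer argument that the paper itself invokes by omitting the proof and citing \cite[Lemmas 4.1 and 4.2]{BMX2023}: encode the martingale and bracket identities as expectations of path functionals (bounded thanks to the cutoff $\theta_M$ and Lemma \ref{Lemma:Uniform_Infinity_Bound}), transfer them through equality of laws, and identify $\tilde M$ with the stochastic integral by showing the difference has vanishing quadratic variation. The only cosmetic remark is that measurability (plus uniform integrability) of these functionals already suffices for the transfer step — continuity is only needed later when passing to the limit in $n$ — but since the functionals are in fact continuous, nothing is lost.
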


Now we show that $(\tilde X^M,\tilde W)$, the $\Prob$-a.s.\ limit of $\{(\tilde X^{n,M},\tilde W^n)\}_{n\in\N}$, is a global weak solution of the SDE in It\^o form with cutoff \eqref{eq:SDE_Ito_cutoff}.

\begin{lemma}\label{lemma:exist_global_cutoff}
$(\widetilde \Omega,  \widetilde{\mathcal{A}},(\widetilde{\mathcal{F}}_t)_{t}, \widetilde \Prob,\tilde X^{M}, \tilde W)$ is a global-in-time, weak solution  to~\eqref{eq:SDE_Ito_cutoff} with respect to the initial condition $x_0 \in E_1$.
\end{lemma}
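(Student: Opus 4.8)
The plan is to pass to the limit in the integral formulation of the Galerkin equation \eqref{eq:SDE_Galerkin}, using the $\widetilde{\mathbb P}$-a.s.\ convergence $(\tilde X^{n,M},\tilde W^n)\to(\tilde X^M,\tilde W)$ in $C([0,T];E_0)\times C([0,T];E_0)$ obtained from Skorokhod's theorem, together with the uniform bounds of Lemmas \ref{Lemma:Uniform_Infinity_Bound} and \ref{Lemma:Uniform_Holder_Bound}. By Lemma \ref{WS} each $\tilde X^{n,M}$ solves \eqref{eq:SDE_Galerkin} driven by $\tilde W^n$, i.e.\ $\widetilde{\mathbb P}$-a.s.\ for all $t\in[0,T]$,
\begin{align*}
\tilde X^{n,M}_t = \Pi_n x_0 + \int_0^t \theta_M(\lVert \tilde X^{n,M}_r\rVert_{E_0})\,\hat b^n(\tilde X^{n,M}_r)\,dr + \sum_{k=1}^n\int_0^t \theta_M(\lVert \tilde X^{n,M}_r\rVert_{E_0})\,\Pi_n\sigma_k(\tilde X^{n,M}_r)\,d\tilde W^n_k(r).
\end{align*}
I would test this identity against a fixed basis element $e_j$ (so that $\langle \Pi_n(\cdot),e_j\rangle_{E_{-1}}=\langle\cdot,e_j\rangle_{E_{-1}}$ once $n\ge j$) to reduce everything to scalar equations, then send $n\to\infty$ term by term.

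First, the left side and the initial datum: $\langle\tilde X^{n,M}_t,e_j\rangle_{E_{-1}}\to\langle\tilde X^M_t,e_j\rangle_{E_{-1}}$ by $E_0$-convergence (hence $E_{-1}$-convergence) and $\Pi_n x_0\to x_0$ in $E_{-1}$. Second, the drift term: $\theta_M$ is continuous and $b$, $\sigma\cdot\nabla\sigma$ are continuous on $E_0$ (Assumption \ref{Assump:drift}-(a), Lemma \ref{lemma:noise_properties}), so the integrand converges $dr$-a.e.; to get convergence of the integrals I would use dominated convergence on $[0,t]$, noting that on the set $\{\theta_M\neq 0\}$ one has $\lVert \tilde X^{n,M}_r\rVert_{E_0}\le 2M$, so $\theta_M(\lVert\tilde X^{n,M}_r\rVert_{E_0})\lVert b(\tilde X^{n,M}_r)\rVert_{E_{-1}}$ and $\theta_M(\cdot)\lVert(\sigma\cdot\nabla\sigma)(\tilde X^{n,M}_r)\rVert_{E_{-1}}$ are bounded uniformly in $n$ and $r$ by the bounded-on-balls property \eqref{eq:sigma_property_4}; also $\Pi_n(\sigma\cdot\nabla\sigma)_k \to (\sigma\cdot\nabla\sigma)_k$ and the tail $\sum_{k>n}$ vanishes uniformly by \eqref{eq:sigma_property_3}–\eqref{eq:sigma_property_4}. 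Third, the stochastic term: here I would argue as in the standard martingale/Skorokhod passage to the limit, e.g.\ showing $\int_0^\cdot \theta_M(\lVert\tilde X^{n,M}_r\rVert_{E_0})\Pi_n\sigma_k(\tilde X^{n,M}_r)\,d\tilde W^n_k(r)\to \int_0^\cdot\theta_M(\lVert\tilde X^M_r\rVert_{E_0})\sigma_k(\tilde X^M_r)\,d\tilde W_k(r)$ in probability, using the a.s.\ convergence of $\tilde W^n_k$ to $\tilde W_k$ (Lemma \ref{BM}), the continuity and uniform (on the support of $\theta_M$) boundedness of the integrand, an Itô isometry/BDG estimate for the difference, and the uniform-in-$n$ summability $\sum_k \sup_{\lVert x\rVert_{E_0}\le 2M}\lVert\sigma_k(x)\rVert_{E_0}^2<\infty$ from \eqref{eq:sigma_property_1} to handle the $k$-tail. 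One subtle point is that $\sigma_k(\tilde X^M_r)$ a priori lies in $E_0$ while the identity is being read in $E_{-1}$; testing against $e_j\in E_1\subset E_{-1}^*$... $E_0\hookrightarrow E_{-1}$ densely and continuously makes $\langle\sigma_k(\tilde X^M_r),e_j\rangle_{E_{-1}}$ well defined and the passage legitimate.

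Having passed to the limit against every $e_j$, and since $\mathrm{span}\{e_j\}$ is dense in $E_{-1}$ while all terms are uniformly bounded in $E_{-1}$ on the support of $\theta_M$, I obtain the full $E_{-1}$-valued (hence, after checking regularity, $E_0$-valued) identity \eqref{eq:SDE_Ito_Integral} with the cutoff, $\widetilde{\mathbb P}$-a.s.\ for all $t\in[0,T]$. It remains to verify that $(\tilde X^M,\tilde W)$ is a genuine weak solution in the sense of Definition \ref{def:strong} adapted to \eqref{eq:SDE_Ito_cutoff}: adaptedness of $\tilde X^M$ to $(\widetilde{\mathcal F}_t)$ and the $Q$-Wiener property of $\tilde W$ w.r.t.\ this filtration hold by construction and Lemma \ref{BM}; the required path regularity $\tilde X^M\in C([0,T];E_0)\cap L^\infty([0,T];E_1)$ $\widetilde{\mathbb P}$-a.s.\ follows since $\tilde X^M$ is a $C([0,T];E_0)$-limit and, by Fatou / lower semicontinuity of $\lVert\cdot\rVert_{E_1}$ on $E_0$ (Remark \ref{remark:norm_semi-continuity}) together with $\sup_n\widetilde{\mathbb E}\sup_{[0,T]}\lVert\tilde X^{n,M}\rVert_{E_1}^2\le C_{T,M}$, one gets $\widetilde{\mathbb E}\sup_{[0,T]}\lVert\tilde X^M_t\rVert_{E_1}^2\le C_{T,M}<\infty$, so $\tilde X^M\in L^\infty([0,T];E_1)$ a.s.; finally the local integrability conditions on $t\mapsto b(\tilde X^M_t)$, $t\mapsto(\sigma\cdot\nabla\sigma)(\tilde X^M_t)$ and $t\mapsto\sum_k\lVert\sigma_k(\tilde X^M_t)\rVert_{E_0}^2$ hold because these maps are bounded on $E_0$-balls and $\tilde X^M$ has continuous $E_0$-paths. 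I expect the stochastic-integral passage to the limit to be the main technical obstacle — controlling the double approximation (in the process and in the driving Brownian motions) together with the infinite-sum tail — though it is by now a routine argument; the drift and regularity parts are straightforward given Lemma \ref{lemma:noise_properties}.
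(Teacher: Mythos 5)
Your proposal is correct and follows essentially the same route as the paper: a.s.\ Skorokhod convergence, dominated convergence (using boundedness on balls and the estimates \eqref{eq:sigma_property_1}--\eqref{eq:sigma_property_4}) for the two deterministic integrals, and a standard convergence-of-stochastic-integrals lemma (the paper invokes \cite[Lemma 4.3]{BMX2023}) for the martingale term, whose hypothesis is exactly the $L^2([0,t];E_{-1})$ convergence of the integrands that you describe. The only differences are presentational: you test against basis elements $e_j$ and conclude by density, whereas the paper passes to the limit directly in $E_{-1}$, and you spell out the $L^\infty([0,T];E_1)$ regularity of the limit via lower semicontinuity, a point the paper leaves implicit.
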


\begin{proof}
We have proved that on a common probability space $(\widetilde{\Omega}, \widetilde{\mathcal{A}}, \widetilde{\Prob})$ there exist random variables $\{(\tilde X^{n,M},\tilde W^n)\}_{n\in\N}$ and $(\tilde X^M, \tilde W)$ such that, $\widetilde{\Prob}$-a.s., $(\tilde X^{n,M},\tilde W^n)$ converges to $(\tilde X^M, \tilde W)$ in $C\left([0,T];E_0\right)\times C\left([0,T];E_0\right)$ and satisfies $\forall t\in[0,T]$
\begin{equation}\label{eq:tilde_X}
\begin{split}
        \tilde X^{n,M}_t - \tilde X^{n,M}_0 =&\int_0^t \theta_M(\lVert \tilde X^{n,M}_s \rVert_{E_0})  b^n(\tilde X^{n,M}_s)\,ds\\
        &\;+\frac12\int_0^t \theta_M(\lVert \tilde X^{n,M}_s \rVert_{E_0})\sum_{k=1}^n \left(\sigma\cdot\nabla\sigma\right)^n_k (\tilde X^{n,M}_s)\,ds\\
        &\;+\int_0^t \theta_M(\lVert \tilde X^{n,M}_s \rVert_{E_0})\sum_{k=1}^n \sigma^n_k (\tilde X^{n,M}_s) \,d  (\tilde W^n_k)_s\\
        &=:A+B+C.
\end{split}
\end{equation}
We want to pass to the limit in \eqref{eq:tilde_X} with respect to the $E_{-1}$ topology for $\tilde \Prob$-a.s.\ $\tilde \omega \in \tilde \Omega$. By the continuous embedding from $E_0$ to $E_{-1}$, $\Vert \tilde X^{n,M}_t-\tilde X_t\Vert_{E_{-1}}\to 0$ at all $t$ in $[0,T]$ $\widetilde{\Prob}$-a.s.\ as $n\to\infty$. Let us analyze the integral terms in~\eqref{eq:tilde_X}. Concerning the first deterministic integral $A$, we have
\begin{align*}
    &\left\Vert
    \int_0^t \theta_M(\lVert \tilde X^{n,M}_s \rVert_{E_0})\Pi_n b(\tilde X^{n,M}_s)\,ds
    -\int_0^t \theta_M(\lVert \tilde X^{M}_s \rVert_{E_0})b(\tilde X^M_s)\,ds
    \right\Vert_{E_{-1}}
  \\
  &\qquad\qquad \le 
    \int_0^t \theta_M(\lVert \tilde X^{n,M}_s \rVert_{E_0})\left\Vert \Pi_n b(\tilde X^{n,M}_s)-\Pi_n b(\tilde X^M_s) \right\Vert_{E_{-1}} ds\\
  &\qquad\qquad \quad+ \int_0^t \theta_M(\lVert \tilde X^{n,M}_s \rVert_{E_0}) \left\Vert \Pi_n b(\tilde X^M_s)-b(\tilde X^M_s) \right\Vert_{E_{-1}} ds\\
  &\qquad\qquad \quad+ \int_0^t \Bigl|\theta_M(\lVert \tilde X^{n,M}_s \rVert_{E_0}) -\theta_M(\lVert \tilde X^{M}_s \rVert_{E_0})\Bigr|\left\Vert b(\tilde X^M_s) \right\Vert_{E_{-1}} ds\\
  &\qquad\qquad \le 
    \int_0^t \left\Vert b(\tilde X^{n,M}_s)-b(\tilde X^M_s) \right\Vert_{E_{-1}} ds + \int_0^t \left\Vert \Pi_n b(\tilde X^M_s)-b(\tilde X^M_s) \right\Vert_{E_{-1}} ds\\
  &\qquad\qquad \quad+ \int_0^t \Bigl|\theta_M(\lVert \tilde X^{n,M}_s \rVert_{E_0}) -\theta_M(\lVert \tilde X^{M}_s \rVert_{E_0})\Bigr|\left\Vert b(\tilde X^M_s) \right\Vert_{E_{-1}} ds,
\end{align*}
where we exploited the projection property of $\Pi_n$ in $E_{-1}$. Then, using the continuity and boundedness on balls of $b$ (see Assumption~\ref{Assump:drift}), the continuity of the cutoff $\theta_M$, we may apply the dominated convergence theorem and conclude that the last integrals in the above inequality converge to zero $\widetilde{\Prob}$-a.s.. Thus we have for the first deterministic integral $A$ in~\eqref{eq:tilde_X}, as $n\to\infty$, 
\begin{equation}\label{eq:conv_A}
     \int_0^t \theta_M(\lVert \tilde X^{n,M}_s \rVert_{E_0}) b^n(\tilde X^{n,M}_s)\,ds
    \to \int_0^t \theta_M(\lVert \tilde X^M_s \rVert_{E_0})b(\tilde X^M_s)\,ds \quad \text{in } E_{-1}\quad\widetilde{\Prob}\text{-a.s..}
\end{equation}
Concerning the second deterministic integral $B$ in~\eqref{eq:tilde_X}, we have similarly
\begin{align*}
    &\left\Vert
    \int_0^t \theta_M(\lVert \tilde X^{n,M}_s \rVert_{E_0})\sum_{k=1}^n \left(\sigma\cdot\nabla\sigma\right)^n_k (\tilde X^{n,M}_s)\,ds
    -\int_0^t \theta_M(\lVert \tilde X^{M}_s \rVert_{E_0}) \left(\sigma\cdot\nabla\sigma\right) (\tilde X^{M}_s)\,ds
    \right\Vert_{E_{-1}}
  \\
  &\qquad\qquad \le 
    \int_0^t \theta_M(\lVert \tilde X^{n,M}_s \rVert_{E_0})\left\Vert \Pi_n \sum_{k=1}^n \left(\sigma\cdot\nabla\sigma\right)_k (\tilde X^{n,M}_s)-\Pi_n  \left(\sigma\cdot\nabla\sigma\right) (\tilde X^{M}_s) \right\Vert_{E_{-1}} ds\\
  &\qquad\qquad \quad+ \int_0^t \theta_M(\lVert \tilde X^{n,M}_s \rVert_{E_0}) \left\Vert \Pi_n  \left(\sigma\cdot\nabla\sigma\right) (\tilde X^{M}_s)-\left(\sigma\cdot\nabla\sigma\right) (\tilde X^{M}_s) \right\Vert_{E_{-1}} ds\\
  &\qquad\qquad \quad+ \int_0^t \Bigl|\theta_M(\lVert \tilde X^{n,M}_s \rVert_{E_0}) -\theta_M(\lVert \tilde X^{M}_s \rVert_{E_0})\Bigr|\left\Vert  \left(\sigma\cdot\nabla\sigma\right) (\tilde X^{M}_s) \right\Vert_{E_{-1}} ds\\
  &\qquad\qquad \le 
    \int_0^t \left\Vert \sum_{k=1}^n \left(\sigma\cdot\nabla\sigma\right)_k (\tilde X^{n,M}_s)-\sum_{k=1}^\infty  \left(\sigma\cdot\nabla\sigma\right)_k (\tilde X^{M}_s) \right\Vert_{E_{-1}} ds\\
    &\qquad\qquad \quad+\int_0^t \left\Vert \Pi_n  \left(\sigma\cdot\nabla\sigma\right) (\tilde X^{M}_s)-\left(\sigma\cdot\nabla\sigma\right) (\tilde X^{M}_s) \right\Vert_{E_{-1}} ds\\
  &\qquad\qquad \quad+ \int_0^t \Bigl|\theta_M(\lVert \tilde X^{n,M}_s \rVert_{E_0}) -\theta_M(\lVert \tilde X^{M}_s \rVert_{E_0})\Bigr|\left\Vert \left(\sigma\cdot\nabla\sigma\right) (\tilde X^{M}_s) \right\Vert_{E_{-1}} ds\\
  &\qquad\qquad =:B_1+B_2+B_3.
\end{align*}
We will show that all three terms in the right-hand side go to zero. Concerning the convergence of the first term $B_1$, by \eqref{eq:sigma_property_3} and dominated convergence theorem, for every $s \in [0,T]$, $\tilde{\mathbb{P}}$-a.s.,
\begin{align*}
    \sum_{k=1}^n \left(\sigma\cdot\nabla\sigma\right)_k (\tilde X^{n,M}_s)
    \to
    \sum_{k=1}^\infty  \left(\sigma\cdot\nabla\sigma\right)_k (\tilde X^{M}_s)
    \qquad\text{in $E_{-1}$},
\end{align*}
and, again by dominated convergence theorem, the term $B_1$ goes to zero $\tilde{\mathbb{P}}$-a.s..
The convergence of $B_2$ and $B_3$ follows in a similar way, exploiting the property of $\Pi_n$, the continuity of $\theta_M$ and \eqref{eq:sigma_property_4}.
Hence we have for the second deterministic integral term $B$ in \eqref{eq:tilde_X}, as $n\to\infty$, 
\begin{equation}\label{eq:conv_B}
     \int_0^t \theta_M(\lVert \tilde X^{n,M}_s \rVert_{E_0})\sum_{k=1}^n \left(\sigma\cdot\nabla\sigma\right)^n_k (\tilde X^{n,M}_s)\,ds
    \to \int_0^t \theta_M(\lVert \tilde X^{M}_s \rVert_{E_0}) \left(\sigma\cdot\nabla\sigma\right) (\tilde X^{M}_s)\,ds \quad \text{in } E_{-1}\quad\widetilde{\Prob}\text{-a.s..}
\end{equation}
Regarding the stochastic integral term $C$ in \eqref{eq:tilde_X}, we rely on \cite[Lemma 4.3]{BMX2023}. Up to extracting a subsequence, in order to prove the convergence of this term $C$ it is enough to show that
\begin{equation} \label{eq:condition_for_stoch_convergence}
\begin{split}
    &\int_0^t \sum_{k=1}^\infty \Bigl\Vert \mathds 1_{k\le n}\theta_M(\lVert \tilde X^{n,M}_s \rVert_{E_0}) \sigma^n_k (\tilde X^{n,M}_s)-\theta_M(\lVert \tilde X^{M}_s \rVert_{E_0}) \sigma_k (\tilde X^{M}_s)\Bigr\Vert^2_{E_{-1}}\,ds \to 0 \quad \widetilde{\Prob}\text{-a.s.}.
\end{split}
\end{equation}
We now show \eqref{eq:condition_for_stoch_convergence}. For every $k\in \N_+$ fixed, for every $s$, we have $\tilde{\mathbb{P}}$-a.s.\ as $n\to\infty$,
\begin{align*}
    \Bigl\Vert \mathds 1_{k\le n}\theta_M(\lVert \tilde X^{n,M}_s \rVert_{E_0}) \Pi_n \sigma_k (\tilde X^{n,M}_s)-\theta_M(\lVert \tilde X^{M}_s \rVert_{E_0}) \sigma_k (\tilde X^{M}_s)\Bigr\Vert_{E_{-1}} \to 0.
\end{align*}
Moreover, since 
\begin{align*}
    &\Bigl\Vert \mathds 1_{k\le n}\theta_M(\lVert \tilde X^{n,M}_s \rVert_{E_0}) \Pi_n \sigma_k (\tilde X^{n,M}_s)-\theta_M(\lVert \tilde X^{M}_s \rVert_{E_0}) \sigma_k (\tilde X^{M}_s)\Bigr\Vert^2_{E_{-1}}\\
    &\qquad\le 2\Bigl\Vert  \Pi_n \sigma_k (\tilde X^{n,M}_s)\Bigr\Vert^2_{E_{-1}} +  2\Bigl\Vert \sigma_k (\tilde X^{M}_s)\Bigr\Vert^2_{E_{-1}}
    \le 2\Bigl\Vert  \sigma_k (\tilde X^{n,M}_s)\Bigr\Vert^2_{E_{-1}} +  2\Bigl\Vert \sigma_k (\tilde X^{M}_s)\Bigr\Vert^2_{E_{-1}}
\end{align*}
the integrand in \eqref{eq:condition_for_stoch_convergence} is uniformly bounded thanks to \eqref{eq:sigma_property_1}. Therefore, by dominated convergence theorem, we obtain the convergence in \eqref{eq:condition_for_stoch_convergence} and thus, up to subsequence extraction, we have on the stochastic integral term $C$ \eqref{eq:tilde_X}, as $n\to\infty$,
\begin{equation}\label{eq:conv_C}
    \int_0^t \theta_M(\lVert \tilde X^{n,M}_s \rVert_{E_0})\sum_{k=1}^n\sigma^n_k (\tilde X^{n,M}_s)\,d  (\tilde W^n_k)_s
    \to \int_0^t \theta_M(\lVert \tilde X^M_s \rVert_{E_0})\sum_{k=1}^\infty \sigma_k (\tilde X^M_s)\,d  (\tilde W_k)_s \quad\text{in } E_{-1}\quad\widetilde{\Prob}\text{-a.s..}
\end{equation}

Putting together~\eqref{eq:conv_A},~\eqref{eq:conv_B} and~\eqref{eq:conv_C}, we can pass to the limit in~\eqref{eq:tilde_X} and get that $(\tilde X^M, \tilde W)$ is a global weak solution to~\eqref{eq:SDE_Ito_cutoff} with the initial condition $x_0$. The proof is complete.
\end{proof}

\subsection{Local maximal solutions} \label{subsecton:local_solution}

In this subsection, we glue together the solutions $X_M$ to \eqref{eq:SDE_Ito_cutoff}, corresponding to different values of $M$, building a local maximal solution $X$ to \eqref{eq:SDE_Ito}. This is a consequence of a uniqueness result for \eqref{eq:SDE_Ito_cutoff}. Indeed, thanks to uniqueness, we gain the probabilistically strong existence, ensuring that solutions to \eqref{eq:SDE_Ito_cutoff} with respect to different values of $M$ can be defined with respect to the same stochastic basis. Moreover, still due to uniqueness, the gluing procedure become unambiguous. 

\begin{lemma}\label{lemma:stopping_time_existence}
    Let $(\Omega,\mathcal{A},(\mathcal{F}_t)_t,\Prob)$ be a filtered probability space and $(\mathcal{F}_t)_t$ a right-continuous filtration. Let $Y$ be an adapted process with values in $E_0$ and trajectories in $L^\infty([0,T];E_1)\cap C([0,T];E_0)$, $\Prob$-a.s.. Then, there exists a sequence $(\tau_k)_{k\in \N}$ of stopping times with the following two properties:
    \begin{enumerate}
        \item $\lVert Y \rVert_{L^\infty([0,\tau_k];E_1)}\le k$ $\Prob$-a.s.,
        \item $\tau_k \uparrow T$ $\Prob$-a.s. as $k\to \infty$.
    \end{enumerate}
\end{lemma}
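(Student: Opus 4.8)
The strategy is to realise the $\tau_k$ as first-entrance times into an open set for a countable family of \emph{continuous} adapted real processes, so as to sidestep the fact that $t\mapsto\lVert Y_t\rVert_{E_1}$ is a priori only defined for a.e.\ $t$. By Remark~\ref{remark:norm_semi-continuity} there is a countable set $(x_n)_{n\in\N}\subset E_0$ such that, setting $\phi_n(x):=\langle x_n,x\rangle_{E_0}/\lVert x_n\rVert_{E_1^*}$, one has $\lVert x\rVert_{E_1}=\sup_n\phi_n(x)$ for all $x\in E_0$ and each $\phi_n\colon E_0\to\R$ is continuous. Since $Y$ is $E_0$-valued, adapted and has continuous trajectories, each $t\mapsto\phi_n(Y_t)$ is a continuous adapted real process. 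For $k\in\N$ I would set
\[
\rho^k_n:=\inf\{t\in[0,T]:\phi_n(Y_t)>k\},\qquad \tau_k:=\inf_{n\in\N}\rho^k_n,
\]
with the convention $\inf\emptyset=T$; note that $\tau_k=\inf\{t\in[0,T]:\lVert Y_t\rVert_{E_1}>k\}$ since $\{t:\lVert Y_t\rVert_{E_1}>k\}=\bigcup_n\{t:\phi_n(Y_t)>k\}$, and that $\tau_k\le\tau_{k+1}$ because $\rho^k_n\le\rho^{k+1}_n$.

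To see that each $\tau_k$ is a stopping time, observe that $\rho^k_n$ is the hitting time of the open set $(k,\infty)$ by the continuous adapted process $\phi_n(Y_\cdot)$: by continuity $\{\rho^k_n<t\}=\bigcup_{s\in\Q\cap[0,t)}\{\phi_n(Y_s)>k\}\in\mathcal{F}_t$ for every $t$, and the right-continuity of the filtration then gives $\{\rho^k_n\le t\}=\bigcap_{m\ge1}\{\rho^k_n<t+\tfrac1m\}\in\mathcal{F}_t$, so $\rho^k_n$ is a stopping time. Consequently $\{\tau_k<t\}=\bigcup_n\{\rho^k_n<t\}\in\mathcal{F}_t$, and once more by right-continuity $\tau_k=\inf_n\rho^k_n$ is a stopping time.

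For property~(1) I would first record the following deterministic fact: if a path $[0,T]\ni t\mapsto Y_t\in E_0$ is continuous and its trajectory lies in $L^\infty([0,T];E_1)$, then $t\mapsto\lVert Y_t\rVert_{E_1}=\sup_n\phi_n(Y_t)$ is lower semicontinuous, so (a point where an l.s.c.\ function strictly exceeds its essential supremum would force it to do so on a whole neighbourhood, a set of positive measure) one actually has $\lVert Y_t\rVert_{E_1}\le\lVert Y\rVert_{L^\infty([0,T];E_1)}<\infty$ for \emph{every} $t\in[0,T]$, and $\sup_{t\in[0,s]}\lVert Y_t\rVert_{E_1}=\lVert Y\rVert_{L^\infty([0,s];E_1)}$ for every $s\in(0,T]$. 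Now fix $k$ and work on the $\Prob$-a.s.\ event where the trajectory of $Y$ is continuous and in $L^\infty([0,T];E_1)$. If $\tau_k>0$ and $t<\tau_k$, then $t<\rho^k_n$ for every $n$, so $\phi_n(Y_s)\le k$ for all $s\le t$ and all $n$; letting $t\uparrow\tau_k$ and using continuity of $\phi_n(Y_\cdot)$ gives $\phi_n(Y_s)\le k$ for all $s\in[0,\tau_k]$ and all $n$, hence $\lVert Y_s\rVert_{E_1}\le k$ for all $s\in[0,\tau_k]$; taking the supremum over $s\in[0,\tau_k]$ yields $\lVert Y\rVert_{L^\infty([0,\tau_k];E_1)}\le k$. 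When $\tau_k=0$ the interval $[0,\tau_k]$ is Lebesgue-null and the bound is trivial. Thus property~(1) holds $\Prob$-a.s..

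Finally, for property~(2), monotonicity in $k$ gives $\tau_k\uparrow\tau_\infty\le T$ a.s., so it remains to show $\tau_\infty=T$ a.s. On the a.s.\ event above, put $M:=\lVert Y\rVert_{L^\infty([0,T];E_1)}<\infty$; by the deterministic fact, $\phi_n(Y_t)\le\lVert Y_t\rVert_{E_1}\le M$ for all $t\in[0,T]$ and all $n$, so for every integer $k>M$ the set $\{t:\phi_n(Y_t)>k\}$ is empty for each $n$, whence $\rho^k_n=T$ for all $n$ and therefore $\tau_k=T$. Hence $\tau_k=T$ for all large $k$, and in particular $\tau_k\uparrow T$ $\Prob$-a.s.. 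The only delicate point in the whole argument is the bookkeeping of the first two paragraphs: one must pass through the continuous surrogates $\phi_n(Y_\cdot)$ to extract a genuine stopping time from an object that is only $L^\infty$ in time, and one must notice — via lower semicontinuity — that for such a $Y$ the pointwise supremum of $\lVert Y_t\rVert_{E_1}$ agrees with its essential supremum, which is exactly what makes both the conclusion $\rho^k_n=T$ and property~(1) go through; the rest is routine.
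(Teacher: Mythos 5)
Your proof is correct and follows essentially the same route as the paper: exit times from $E_1$-balls, measurability via the lower semi-continuity of $\lVert\cdot\rVert_{E_1}$ on $E_0$ from Remark~\ref{remark:norm_semi-continuity} together with right-continuity of the filtration, and the identification of the pointwise supremum of $t\mapsto\lVert Y_t\rVert_{E_1}$ with its essential supremum. The only cosmetic difference is that you establish the stopping-time property through the continuous surrogates $\phi_n(Y_\cdot)$ and strict-inequality hitting times, whereas the paper works directly with the l.s.c.\ norm and suprema over rationals; both are fine.
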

\begin{proof}
Introduce the exit times for the process $Y$ from the open ball $B_{k;E_1}$, which are defined as follow
\begin{align*}
    \tau_k\coloneqq \inf \{ t: \lVert Y_t\rVert_{E_1}\ge k\}.
\end{align*}
The fact that $\left(\tau_k\right)_{k\in\N}$ is a sequence of stopping times is a consequence of the lower semi-continuity of the norm $\lVert\cdot \rVert_{E_1}: E_0 \to [0,+\infty]$ (Remark \ref{remark:norm_semi-continuity}) and the continuity of the paths of $Y$ in $E_0$. In particular, we have to show that 
\begin{align*}
    \{\tau_k \le t\} =  \bigcap_{n\in\N} \left\{\sup_{s\in[0,t+\frac 1n]}\lVert Y_s\rVert_{E_1}\ge k\right\} \in \mathcal{F}_t. 
\end{align*}
Exploiting the right-continuity of the filtration, it is sufficient to show that, for all $n\in \N$,
\begin{align*}
    \left\{\sup_{s\in[0,t+\frac 1n]}\lVert Y_s\rVert_{E_1}\ge k\right\} \in \mathcal{F}_{t+\frac1n}.
\end{align*}
The previous relation holds, because, by lower semi-continuity, the function
\begin{align*}
   \sup_{s\in[0,t+\frac 1n]}\lVert Y_s\rVert_{E_1}=\sup_{s\in[0,t+\frac 1n] \cap \Q}\lVert Y_s\rVert_{E_1}
\end{align*}
is $\mathcal{F}_{t+\frac1n}$-measurable, being the supremum of a countable number of $\mathcal{F}_{t+\frac1n}$-measurable functions.
When comes to the two properties,
clearly the first property is satisfied simply by the definition of the exit times. Concerning the second property, it is sufficient to show that $\sup_{s\in [0,T]} \lVert Y_s \rVert_{E_1} = \lVert Y \rVert_{L^\infty([0,T],E_1)}$ $\Prob$-a.s.. The inequality $\sup_{s\in [0,T]} \lVert Y_s \rVert_{E_1} \ge \lVert Y \rVert_{L^\infty([0,T],E_1)}$ is trivial; we show the opposite inequality arguing by contradiction. Assume that there exists $\hat \omega$ such that $Y(\hat \omega) \in L^\infty([0,T];E_1)\cap C([0,T];E_0)$, while $\sup_{s\in [0,T]} \lVert Y_s (\hat \omega) \rVert_{E_1} > \lVert Y (\hat \omega)\rVert_{L^\infty([0,T],E_1)}$. Then, there exists $\hat s \in [0,T]$ such that $\lVert Y_{\hat s} \rVert_{E_1} > \lVert Y \rVert_{L^\infty([0,T],E_1)}$. On the other hand, by lower semi-continuity, $\forall \varepsilon >0$ there exists $\delta>0$ such that $\forall s \in B_\delta(\hat s) \cap [0,T]$ we have $\lVert Y_{s} \rVert_{E_1} > \lVert Y_{\hat s} \rVert_{E_1} -\varepsilon $, which implies that $ \lVert Y \rVert_{L^\infty([0,T],E_1)} \ge \lVert Y_{\hat s}  \rVert_{E_1} $, showing a contradiction.
\end{proof}

\begin{lemma} \label{lemma:uniqueness_cutoff}
Let $M\le M'$ and let $X^M$, $X^{M'}$ be two global solution to \eqref{eq:SDE_Ito_cutoff} relative to the same stochastic basis $( \Omega,  \mathcal{A},(\mathcal{F}_t)_{t},  \Prob,  W)$ and initial condition $x_0$, but cutoff functions parametrized by possibly different $M$ and $M'$ respectively. Let
\begin{align} \label{eq:stopping_times_for_X}
\begin{split}
    \tau_M=\inf\{t\mid \|X_t^M\|_{E_0}\ge M\},\\
    \tau'_M=\inf\{t\mid \|X_t^{M'}\|_{E_0}\ge M\}.
\end{split}
\end{align}
Then $\Prob$-a.s.\ $\tau_M=\tau'_M$ and $X_{\cdot\wedge \tau_M}^M=X_{\cdot\wedge \tau'_M}^{M'}$.
\end{lemma}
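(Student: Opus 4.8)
The plan is to prove this by a standard localization-and-uniqueness argument. First I would fix a sample point and work pathwise where convenient, but the core is a pathwise uniqueness estimate for the cutoff SDE up to the relevant stopping time. Introduce the stopping time
\[
\rho \coloneqq \tau_M \wedge \tau'_M,
\]
and observe that on $[0,\rho)$ both $\|X^M_t\|_{E_0}<M$ and $\|X^{M'}_t\|_{E_0}<M\le M'$, so that $\theta_M(\|X^M_t\|_{E_0})=1$ and $\theta_{M'}(\|X^{M'}_t\|_{E_0})=1$. Hence on $[0,\rho)$ both processes solve the \emph{same} (un-cutoff) It\^o SDE \eqref{eq:SDE_Ito} with the same initial condition $x_0$ and the same driving noise $W$.

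Next I would run the $E_0$-energy estimate for the difference $Z_t \coloneqq X^M_{t}-X^{M'}_{t}$ on $[0,\rho\wedge\tau_k]$, where $(\tau_k)$ is the announcing sequence from Lemma \ref{lemma:stopping_time_existence} applied to the (say) maximum of the two processes, so that the $E_1$-norms stay bounded by $k$ up to time $\tau_k$. Apply It\^o's formula to $\|Z_t\|_{E_0}^2$. The drift difference is controlled by Assumption \ref{Assump:drift}-(d): $\langle b(X^M)-b(X^{M'}),Z\rangle_{E_0}\le \chi(k)\|Z\|_{E_0}^2$. The Stratonovich-correction difference $\tfrac12[(\sigma\cdot\nabla\sigma)(X^M)-(\sigma\cdot\nabla\sigma)(X^{M'})]$ is controlled by \eqref{eq:sigma_property_5} (times $\|Z\|_{E_0}$, after Cauchy--Schwarz), and the martingale part together with the It\^o-correction $\sum_k\|\sigma_k(X^M)-\sigma_k(X^{M'})\|_{E_0}^2$ is controlled by \eqref{eq:sigma_property_2}. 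One also needs that the stochastic integral $\int_0^{\cdot}2\langle Z_s,(\sigma_k(X^M_s)-\sigma_k(X^{M'}_s))\rangle_{E_0}\,dW^k_s$ is a genuine martingale on $[0,\rho\wedge\tau_k]$, which holds since the integrand is bounded there (using again \eqref{eq:sigma_property_2} and boundedness of $\|Z\|_{E_0}$). Taking expectations kills the martingale and yields
\[
\E\|Z_{t\wedge\rho\wedge\tau_k}\|_{E_0}^2 \le \big(\chi(k)+h(k)+C h(k)\big)\int_0^t \E\|Z_{s\wedge\rho\wedge\tau_k}\|_{E_0}^2\,ds,
\]
so Gr\"onwall gives $Z\equiv 0$ on $[0,\rho\wedge\tau_k]$ a.s.; letting $k\to\infty$ (using $\tau_k\uparrow T$) gives $X^M_{t}=X^{M'}_{t}$ for all $t\in[0,\rho)$, a.s., hence by continuity also at $t=\rho$.

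Finally I would upgrade this to the claimed statement. Since $X^M=X^{M'}$ on $[0,\rho]$ and $\|X^M_t\|_{E_0}=\|X^{M'}_t\|_{E_0}$ on $[0,\rho]$, the two exit times from $\overline{B}_{M;E_0}$ cannot split: if, say, $\tau_M<\tau'_M$ on a set of positive probability, then on that set $\|X^M_{\tau_M}\|_{E_0}=M$ while $\|X^{M'}_{\tau_M}\|_{E_0}<M$, contradicting $\|X^M_{\tau_M}\|_{E_0}=\|X^{M'}_{\tau_M}\|_{E_0}$ (here one must be a little careful about whether the inf is attained; by continuity of paths in $E_0$ and lower semicontinuity one gets $\|X^M_{\tau_M}\|_{E_0}\ge M$ and $\le M$, so $=M$, and symmetrically). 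Thus $\tau_M=\tau'_M$ a.s., and then $\rho=\tau_M=\tau'_M$, so $X^M_{\cdot\wedge\tau_M}=X^{M'}_{\cdot\wedge\tau'_M}$ a.s.

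The main obstacle I anticipate is bookkeeping rather than conceptual: making sure the stopping-time truncation $\rho\wedge\tau_k$ simultaneously (i) makes $\theta_M,\theta_{M'}\equiv 1$, (ii) keeps the $E_1$-norms bounded so that $\chi,h$ are evaluated at a fixed constant $k$, and (iii) makes the stochastic integral a true martingale, and then checking that the two components $\rho$ (depending on both processes' $E_0$-norms) and $\tau_k$ are genuine stopping times for the common filtration — the latter via Lemma \ref{lemma:stopping_time_existence} and the continuity of $\|X^M\|_{E_0},\|X^{M'}\|_{E_0}$. A secondary subtlety is the right-continuity of the filtration needed in Lemma \ref{lemma:stopping_time_existence}; since the weak-existence construction produced $(\widetilde{\mathcal F}_t)$ satisfying the usual conditions, this is available here as well.
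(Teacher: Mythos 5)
Your proposal is correct and follows essentially the same route as the paper: truncate by the $E_1$-bounding stopping times of Lemma \ref{lemma:stopping_time_existence} intersected with $\tau_M\wedge\tau'_M$ (so that both cutoffs equal $1$ and $\chi$, $h$ are evaluated at fixed constants), apply It\^o's formula to $\lVert X^M-X^{M'}\rVert_{E_0}^2$, control the terms via Assumption \ref{Assump:drift}-(d) and estimates \eqref{eq:sigma_property_2}, \eqref{eq:sigma_property_5}, and close with Gr\"onwall. The only cosmetic difference is that you take expectations directly using the true-martingale property of the truncated stochastic integral, whereas the paper bounds $\E\sup_t\lVert Y_t\rVert_{E_0}^2$ via Burkholder--Davis--Gundy and Young; your concluding argument for $\tau_M=\tau'_M$ is in fact spelled out more carefully than in the paper.
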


\begin{proof}[Proof of Lemma~\ref{lemma:uniqueness_cutoff}]
Let $\{\sigma^M_N\}_{N\in \N}$ and $\{\sigma^{M'}_N\}_{N\in \N}$ be stopping times relative to $X^M$ and $X^{M'}$ respectively as specified by Lemma \ref{lemma:stopping_time_existence}. Let $\sigma_N\coloneqq \sigma^M_N \wedge \sigma^{M'}_N \wedge \tau_M \wedge \tau'_M$.
Denote the difference process by $Y\coloneqq X^M-X^{M'}$. Its stochastic differential on $[0, \sigma_N)$ is
\begin{align*}
    dY_t=
    &\Big(\theta_M(\lVert X^M_t \rVert_{E_0})b(X^M_t)-\theta_M(\lVert X^{M'}_t \rVert_{E_0})b(X^{M'}_t) \Big) dt\\
    &+\frac12\Big(\theta_M(\lVert X^M_t \rVert_{E_0})\left(\sigma\cdot \nabla \sigma\right)(X^M_t)-\theta_M(\lVert X^{M'}_t \rVert_{E_0})\left(\sigma\cdot \nabla \sigma\right)(X^{M'}_t) \Big) dt\\
    &+\sum_{k\ge1}\Big(\theta_M(\lVert X^M_t \rVert_{E_0})\sigma_k(X^M_t)-\theta_M(\lVert X^{M'}_t \rVert_{E_0})\sigma_k(X^{M'}_t) \Big)dW^k_t\\
    &=\left(b(X^M_t)-b(X^{M'}_t) \right)dt  +\frac12\left(\left(\sigma\cdot \nabla \sigma\right)(X^M_t)-\left(\sigma\cdot \nabla \sigma\right)(X^{M'}_t) \right)dt\\
    &+\sum_{k\ge1}\left(\sigma_k(X^M_t)-\sigma_k(X^{M'}_t) \right)dW^k_t,
\end{align*}
and, by It\^o formula,
\begin{align*}
    d\lVert Y_t\rVert^2_{E_0}
    & =2\langle Y_t, b(X^M_t)-b(X^{M'}_t) \rangle_{E_0} dt\\
    &\quad+\langle Y_t, \left(\sigma\cdot \nabla \sigma\right)(X^M_t)-\left(\sigma\cdot \nabla \sigma\right)(X^{M'}_t) \rangle_{E_0}dt\\
    &\quad+\sum_{k\ge1} \left\Vert  \sigma_k(X^M_t)-\sigma_k(X^{M'}_t) \right\Vert^2_{E_0} dt\\
    &\quad+2\sum_{k\ge1}\langle Y_t,\sigma_k(X^M_t)-\sigma_k(X^{M'}_t) \rangle_{E_0} dW^k_t\\
    &=J_d dt + J_s dW_t.
\end{align*}
Let us analyse the above formula terms with the aim of applying a Gronwall estimate. By Assumption~\ref{Assump:drift}, estimates \eqref{eq:sigma_property_5} and \eqref{eq:sigma_property_2}, it follows that
\begin{align*}
    J_d 
    &\le 2 |\langle Y_t,b(X^M_t)- b(X^{M'}_t)\rangle_{E_0}| + \Vert Y_t \Vert_{E_0} \left\Vert \left(\sigma\cdot \nabla \sigma\right)(X^M_t)-\left(\sigma\cdot \nabla \sigma\right)(X^{M'}_t)  \right\Vert_{E_0}\\
    &\quad +  \sum_{k\ge1} \left\Vert  \sigma_k(X^M_t)-\sigma_k(X^{M'}_t) \right\Vert^2_{E_0} \\
    &\le 2 \chi (\Vert X^M_t\Vert_{E_1}\vee\Vert X^{M'}_t\Vert_{E_1} ) \lVert Y_t\rVert^2_{E_0} + 2h (\Vert X^M_t\Vert_{E_0}\vee\Vert X^{M'}_t\Vert_{E_0} )\Vert Y_t\Vert^2_{E_0}\\
    &\le C_N\Vert Y_t\Vert^2_{E_0}.
\end{align*}
We handle the stochastic term by Burkholder-Davis-Gundy inequality and estimate \eqref{eq:sigma_property_2}
\begin{align*}
\E \sup_{t\in [0,S]} &\left|\int_0^{t\wedge \sigma_N} \sum_{k\ge1}\langle Y_r,\sigma_k(X^M_r)-\sigma_k(X^{M'}_r) \rangle_{E_0} dW^k_r\right|\\
&\le C\E \left( \int_0^{S\wedge \sigma_N} \sum_{k\ge1} \left|\langle Y_r,\sigma_k(X^M_r)- \sigma_k(X^{M'}_r)\rangle_{E_0}\right|^2\,dr\right)^\frac12\\
&\le C\E \left( \int_0^{S\wedge \sigma_N} \Vert Y_r\Vert^2_{E_0}  \sum_{k\ge1} \Vert \sigma_k(X^M_r)- \sigma_k(X^{M'}_r)\Vert^2_{E_0}\,dr\right)^\frac12\\ 
&\le C_N \E \left( \int_0^{S\wedge \sigma_N} \lVert Y_r \rVert^4_{E_0}\,dr\right)^\frac12 \le C_N \E \left( \sup_{t\in[0,S\wedge \sigma_N]} \lVert Y_t \rVert^2_{E_0} \int_0^{S} \sup_{t\in[0,r\wedge \sigma_N]} \lVert Y_t \rVert^2_{E_0}\,dr\right)^\frac12
\end{align*}
and by Young inequality we end up with
\begin{align*}
    \E \sup_{t\in [0,S]} \left|\int_0^{t\wedge \sigma_N} J_s \,dW \right|\le \frac12 \E \sup_{t\in[0,S\wedge \sigma_N]} \lVert Y_t \rVert^2_{E_0} + C_N \int_0^{S} \E \sup_{t\in[0,r\wedge \sigma_N]} \lVert Y_t \rVert^2_{E_0}\,dr.
\end{align*}
Combining all the previous estimate, the following holds
\begin{align*}
    \E \sup_{t\in[0,S\wedge \sigma_N]} \lVert Y_t \rVert^2_{E_0} \le C_N \int_0^{S} \E \sup_{t\in[0,r\wedge \sigma_N]} \lVert Y_t \rVert^2_{E_0}\,dr \qquad \forall S>0,
\end{align*}
and thus, by Gronwall inequality, 
\begin{align*}
    \E \sup_{t\in[0,T\wedge \sigma_N]} \lVert Y_t \rVert^2_{E_0} =0.
\end{align*}
Since both $\sigma^M_N\to T$ and $\sigma^{M'}_N\to T$ $\Prob$-a.s.,  as $N\to\infty$, then $Y_t=0$ for all $t\in [0,\tau_M \wedge \tau'_M]$ $\Prob$-a.s.. As a consequence, we also have $\tau_M = \tau'_M$ $\Prob$-a.s..
\end{proof}

Thanks to Lemma \ref{lemma:uniqueness_cutoff} we can unambiguously define the local maximal solution to \eqref{eq:SDE_Ito}. First, let $\tau_M$ be as in \eqref{eq:stopping_times_for_X} and introduce $\tau\coloneqq \sup_{M\in\mathbb{N}}\tau_M$. We define the process $(X_t)_{t\in[0,\tau)}$ by the following formula
\begin{align} \label{eq:glued_solution}
    X_t = X_t^M \text{ for }t<\tau_M.
\end{align}
Note that $\tau_M\uparrow \tau$, $\tau_M<\tau$ for every $M$, hence $\tau$ is accessible; in particular $\tau>0$ $\Prob$-a.s..

\begin{lemma} \label{lemma:properties_glued_solution}
    The process $X$ defined by \eqref{eq:glued_solution} satisfies the following properties:
    \begin{enumerate}
        \item $X\in C([0,\tau);E_0)$ $\Prob$-a.s.;
        \item $X\in L^\infty([0,\tau_M];E_1)$ $\Prob$-a.s. for every $M$;
        \item $X$ is a local solution to \eqref{eq:SDE_Ito} on $[0,\tau)$;
        \item $\sup_{t<\tau}\|X_t\|_{E_0}=+\infty$ on $\{\tau<\infty\}$.
    \end{enumerate}
\end{lemma}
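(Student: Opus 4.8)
The statement collects four properties of the glued process $X$ defined in~\eqref{eq:glued_solution}, and the plan is to verify them one at a time, leaning heavily on Lemma~\ref{lemma:uniqueness_cutoff} (which guarantees the gluing is well defined and consistent) and on the known properties of each $X^M$ as a weak solution to the cutoff equation~\eqref{eq:SDE_Ito_cutoff}.

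For property (1), I would first note that, by Lemma~\ref{lemma:uniqueness_cutoff}, $X_{\cdot\wedge\tau_M}^M = X_{\cdot\wedge\tau_{M}}^{M'}$ for $M \le M'$, so on $[0,\tau_M)$ the definition $X_t := X_t^M$ is independent of which larger cutoff level we use; in particular $X$ is a well-defined adapted process on $[0,\tau)$. Since each $X^M$ has paths in $C([0,T];E_0)$ $\Prob$-a.s.\ for all $T$, and the $\tau_M$ form an increasing sequence with $\tau_M \uparrow \tau$, the path $t \mapsto X_t$ agrees on each $[0,\tau_M)$ with the continuous path $t \mapsto X_t^M$; hence $X \in C([0,\tau);E_0)$ $\Prob$-a.s. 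Property (2) is immediate by the same identification: on $[0,\tau_M]$, $X = X^{M'}_{\cdot\wedge\tau_M}$ for any $M' \ge M$ (note $\tau_M < \tau_{M'}$, so $\tau_M$ is strictly inside the lifespan of $X^{M'}$ up to the cutoff, where $\theta_{M'}\equiv 1$ until $\|X^{M'}\|_{E_0}$ reaches $M'>M$), and $X^{M'}$ has paths in $L^\infty([0,T];E_1)$ $\Prob$-a.s.

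For property (3), I would argue that on the stochastic interval $[0,\tau_M)$ the process $X = X^M$ and, crucially, before $\tau_M$ one has $\|X_t^M\|_{E_0} < M$, so the cutoff factor $\theta_M(\|X_t^M\|_{E_0})$ equals $1$; therefore the integral identity~\eqref{eq:SDE_Ito_Integral} defining a solution to the non-cutoff equation~\eqref{eq:SDE_Ito} holds on $[0,\tau_M)$. Since this holds for every $M$ and $\tau_M \uparrow \tau$, the identity holds on all of $[0,\tau)$; together with properties (1)--(2) and the local integrability of the coefficients along $X$ (which follows from the local integrability already established for each $X^M$ via the bounds in Lemma~\ref{lemma:noise_properties}, restricted to the compact-in-$E_1$ regions $\{\|X\|_{E_1}\le k\}$ given by the stopping times of Lemma~\ref{lemma:stopping_time_existence}), this says precisely that $X$ is a local solution to~\eqref{eq:SDE_Ito} on $[0,\tau)$ in the sense of Definition~\ref{def:strong}, with $\tau$ accessible and announcing sequence $(\tau_M \wedge M)_{M}$ (or a relabeling thereof).

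Property (4) is the blow-up alternative and is the main point requiring care. I would argue by contradiction on the event $A := \{\tau < \infty,\ \sup_{t<\tau}\|X_t\|_{E_0} = L < \infty\}$ and aim to show $\Prob(A) = 0$. Pick $M$ with $M > L$; then on $A$, the definition of $\tau_M = \inf\{t : \|X_t^M\|_{E_0} \ge M\}$ would force $\tau_M = \infty$ (the path never reaches level $M$), since on $[0,\tau_M)$ we have $X = X^M$ and this path stays below $L < M$ — but here one must be slightly careful, because a priori $X^M$ and $X$ only agree on $[0,\tau_M)$, so what one really uses is that $X^M$ is a \emph{global} solution to the cutoff equation and that, by continuity, if $X^M_t$ never reaches $M$ on $[0,\tau_M)$ then in fact $\tau_M = +\infty$. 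Consequently $\tau = \sup_M \tau_M = +\infty$ on $A$, contradicting $\tau < \infty$ on $A$; hence $\Prob(A) = 0$, which is exactly the claim that $\sup_{t<\tau}\|X_t\|_{E_0} = +\infty$ on $\{\tau < \infty\}$. The subtle step — and the one I'd flag as the main obstacle — is the interplay between the strict inequality in the definition of $\tau_M$ and the continuity of $X^M$ near $\tau_M$: one needs that on $\{\tau_M < \infty\}$ the path actually attains $\|X^M_{\tau_M}\|_{E_0} = M$ (not merely approaches it), which is where lower semicontinuity of $\|\cdot\|_{E_0}$ — here genuine continuity, since the $E_0$-norm is continuous on $E_0$ — and the a.s.\ $E_0$-continuity of the paths of $X^M$ enter; combined with this, $\sup_{t<\tau_M}\|X^M_t\|_{E_0} \ge M$ on $\{\tau_M<\infty\}$, and summing over $M$ through $\tau = \sup_M\tau_M$ yields $\sup_{t<\tau}\|X_t\|_{E_0} = +\infty$ on $\{\tau<\infty\}$.
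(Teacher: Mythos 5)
Your proposal is correct and follows essentially the same route as the paper: properties (1)--(2) are inherited from the $X^M$ via the consistency given by Lemma~\ref{lemma:uniqueness_cutoff}, property (3) uses that the cutoff $\theta_M$ equals $1$ before $\tau_M$ so the non-cutoff identity holds on each $[0,\tau_M]$ and hence on $[0,\tau)$, and property (4) rests on the path attaining norm $M$ at the exit time $\tau_M$ so that $\sup_{t<\tau}\|X_t\|_{E_0}\ge\sup_M M=\infty$ when $\tau<\infty$. Your contradiction formulation of (4) and the extra care about continuity at $\tau_M$ are just a more verbose version of the paper's direct one-line computation.
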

\begin{proof}
    The first two properties follow from the analogous properties satisfied by the processes $X^M$. Concerning the third property, it is enough to note that, since $X=X^M$ on $[0,\tau_M)$, then
    \begin{align*}
        \int_0^t \sigma(X_r)dW_r = \int_0^t \sigma(X^M_r)dW_r,\quad \forall t\in [0,\tau_M],
    \end{align*}
    therefore \eqref{eq:SDE_Ito} is satisfied on $[0,\tau_M]$ for every $M$, and so on $[0,\tau)$.
    Concerning the last property, if $\tau<\infty$, then $\tau_M<\infty$ for every $M$ and so
    \begin{align*}
        \sup_{t<\tau}\|X_t\|_{E_0}\ge \sup_M \|X_{\tau_M}\|_{E_0} = \sup_M M =\infty.
    \end{align*}
    The proof is complete.
\end{proof}

\section{Global well-posedness by Lyapunov function method} \label{section:global_solution}

In this section, we exhibit a Lyapunov function for the SDE \eqref{eq:SDE_Ito} to show the global well-posedness.

Let $R$ be the radius introduced in Assumption~\ref{Assump:drift}-(b), and let $a\in (0,\log(2R))$. We introduce a radially symmetric $C^2$ Lyapunov function $V:E_0 \to \R$, non-decreasing in the $E_0$ norm, such that
\begin{align}\label{eq:Lyapunov_function}
\begin{cases}
    V(x)\ge a, &\forall x\in E_0, \\
    V(x)= a, &\forall x\in B_{R;E_0}, \\
    V(x)=\log\lVert x \rVert_{E_0}, \quad &\forall x\in B^c_{2R;E_0}.
\end{cases}
\end{align}
Explicit computations show that for all $x\in B^c_{2R;E_0}$, with respect to the $E_0$ scalar product,
\begin{align}\label{eq:Lyapunov_derivatives}
    \nabla V(x)&=\frac x{\lVert x \rVert_{E_0}^2},\\
    D^2V(x)&=\frac 1{\lVert x \rVert_{E_0}^2}\left(\Id -2\frac{x\otimes x}{\lVert x \rVert_{E_0}^2} \right).
\end{align}
Thanks to this Lyapunov function, in the following theorem we can show that local solutions to \eqref{eq:SDE_Ito} are actually global.
\begin{theorem}[Global Existence] \label{thm:global_existence}
    Under Assumptions \ref{Assump:WienerProcess}, \ref{Assump:Spaces}, \ref{Assump:Projection}, \ref{Assump:drift} and \ref{Assump:sigma}, let in \eqref{eq:sigma} $\eta> m/2$, $c_N>0$ (or alternatively $\eta =m/2$, $c_N$ large enough) and $\alpha$ such that
    \begin{align}\label{eq:alpha_condition}
        1 < \alpha < 1+ \frac{1}{\eta-1} \frac{\Tr Q - \lVert Q \rVert}{\lVert Q \rVert}.
    \end{align}
    Then the stochastic process $(X_t)_{t\in [0,\tau)}$, defined in \eqref{eq:glued_solution}, is a global strong solution of \eqref{eq:SDE_ItoForm}, i.e. $\tau=\infty$ $\Prob$-a.s..
\end{theorem}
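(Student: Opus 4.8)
The strategy is the classical Lyapunov/Khasminskii no-explosion scheme, adapted to the infinite-dimensional Stratonovich SDE. Recall that by Lemma~\ref{lemma:properties_glued_solution}, on the event $\{\tau<\infty\}$ we have $\sup_{t<\tau}\|X_t\|_{E_0}=+\infty$; so it suffices to show that $\limsup_{t\uparrow\tau}\|X_t\|_{E_0}<\infty$ $\Prob$-a.s.\ on $\{\tau<\infty\}$, which by the monotone relation between $V$ and $\|\cdot\|_{E_0}$ and~\eqref{eq:Lyapunov_function} will follow once we control $\E[V(X_{t\wedge\tau_M})]$ uniformly in $M$ and locally uniformly in $t$. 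First I would apply It\^o's formula to $V(X_t)$ on $[0,\tau_M)$ using the It\^o form~\eqref{eq:SDE_ItoForm} of the equation (legitimate since $V$ is $C^2$ and, up to the stopping times $\tau_M$, all the integrability requirements in Definition~\ref{def:strong} hold and $X$ stays in a region where~\eqref{eq:SDE_ItoForm} is valid once $\|X\|_{E_0}>R$; for $\|X\|_{E_0}\le 2R$ the function $V$ is constant equal to $a$ so nothing happens there). This gives
\begin{align*}
dV(X_t)=\langle\nabla V(X_t),b(X_t)\rangle_{E_0}\,dt+\tfrac12\langle\nabla V(X_t),(\sigma\cdot\nabla\sigma)(X_t)\rangle_{E_0}\,dt+\tfrac12\sum_{k\ge1}\langle D^2V(X_t)\sigma_k(X_t),\sigma_k(X_t)\rangle_{E_0}\,dt+dM_t,
\end{align*}
where $M_t=\sum_{k\ge1}\int_0^t\langle\nabla V(X_r),\sigma_k(X_r)\rangle_{E_0}\,dW^k_r$ is a local martingale.

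**The key estimate.** The heart of the proof is to show that, for $\|x\|_{E_0}>2R$, the drift part of $dV(X_t)$ is bounded above by a constant (independent of $x$). Plugging in~\eqref{eq:Lyapunov_derivatives} and the explicit formulas~\eqref{eq:alpha_beta_gamma_delta} for $\sigma_k$ far from the origin, one computes: the first term is $\langle x,b(x)\rangle_{E_0}/\|x\|_{E_0}^2\le c_D\|x\|_{E_0}^{m-2}$ by Assumption~\ref{Assump:drift}-(b); the It\^o--Stratonovich correction term and the $D^2V$ term are both of order $\|x\|_{E_0}^{2\eta-2}$ with explicit coefficients depending on $\alpha$, $\eta$, $c_N$, $\lambda_k$ and $\langle x/\|x\|_{E_0},w_k\rangle_{E_0}$. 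The crucial point — this is where~\eqref{eq:alpha_condition} comes in — is that the $\|x\|_{E_0}^{2\eta-2}$ coefficient coming from $D^2V$ is \emph{negative} (because $D^2V$ is negative along the radial direction, $\langle D^2V(x)x,x\rangle_{E_0}=-\|x\|_{E_0}^{-2}<0$, and $\sigma_k(x)$ is dominated by its radial component $\delta_k(x)x$ when $\alpha>1$), and the condition $\alpha<1+\frac{1}{\eta-1}\frac{\Tr Q-\|Q\|}{\|Q\|}$ is exactly what guarantees that after summing over $k$ this negative contribution beats the positive one from the It\^o--Stratonovich correction; i.e.\ the total $\|x\|_{E_0}^{2\eta-2}$ coefficient is $\le -\kappa<0$ for some $\kappa>0$. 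Since $2\eta-2\ge m-2$ (as $\eta\ge m/2$), with either $\eta>m/2$ and $c_N>0$ arbitrary, or $\eta=m/2$ and $c_N$ large, the term $-\kappa c_N^2\|x\|_{E_0}^{2\eta-2}$ dominates $c_D\|x\|_{E_0}^{m-2}$ for $\|x\|_{E_0}$ large, and is bounded anyway on the compact shell; hence the whole drift of $V(X_t)$ is $\le C$ for a constant $C$ (one must also incorporate the region $R<\|x\|_{E_0}\le 2R$ where $V$ interpolates, but there everything is bounded). I expect this computation — carefully tracking the sign and the coefficient $\frac{\Tr Q-\|Q\|}{\|Q\|}$, which arises by estimating $\sum_k\lambda_k\langle e,w_k\rangle^2$ from below by... actually from the worst case $\sum_k\lambda_k\langle e,w_k\rangle^2\ge$ the infimum over unit $e$, related to $\lambda_{\min}$ versus $\|Q\|=\lambda_{\max}$ — to be the main obstacle, mostly bookkeeping but sign-sensitive.

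**Concluding via localization and Gronwall/Fatou.** Once the drift is bounded by $C$, I would take expectations after stopping at $\tau_M\wedge t$: the local martingale $M_{t\wedge\tau_M}$ is a genuine martingale (on $[0,\tau_M]$, $\|X\|_{E_0}\le M$, so $\langle\nabla V(X),\sigma_k(X)\rangle_{E_0}$ and the Novikov/boundedness conditions hold by Assumption~\ref{Assump:sigma}-(a) and Assumption~\ref{Assump:WienerProcess}), hence $\E[V(X_{t\wedge\tau_M})]\le V(x_0)+Ct$. Now on $\{\tau_M\le t\}$ we have $\|X_{\tau_M}\|_{E_0}=M$, so $V(X_{t\wedge\tau_M})\ge \mathds 1_{\{\tau_M\le t\}}\log M$ (for $M\ge 2R$), giving $\Prob(\tau_M\le t)\le (V(x_0)+Ct)/\log M\to 0$ as $M\to\infty$. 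Since $\tau_M\uparrow\tau$, this yields $\Prob(\tau\le t)=0$ for every $t$, hence $\tau=\infty$ $\Prob$-a.s., which is the claim. A couple of technical points to address along the way: justifying It\^o's formula for $V$ composed with the infinite-dimensional semimartingale $X$ (standard, since $V\in C^2$, $DV,D^2V$ are bounded on bounded sets, and the series $\sum_k\|\sigma_k(X)\|_{E_0}^2$ converges by Lemma~\ref{lemma:noise_properties}), and handling the transition region $\{R<\|x\|_{E_0}<2R\}$ where the explicit form of $\sigma$ still holds (by Assumption~\ref{Assump:sigma}-(c)) but $V$ is only an interpolation — there all quantities are bounded, contributing only to the constant $C$.
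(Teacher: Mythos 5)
Your proposal is correct and follows essentially the same route as the paper: the Lyapunov function $V\approx\log\lVert\cdot\rVert_{E_0}$, It\^o's formula, a region-by-region bound on the generator with the sign analysis in $B^c_{2R;E_0}$ driven by condition \eqref{eq:alpha_condition}, and then localization plus the Markov inequality on $V(X_{t\wedge\tau_M})$ (your direct bound $\E[V(X_{t\wedge\tau_M})]\le V(x_0)+Ct$ replaces the paper's $\mathcal{L}V\le cV$ plus Gronwall, which is an equivalent cosmetic variant). The only imprecision is your parenthetical guess about where $\frac{\Tr Q-\lVert Q\rVert}{\lVert Q\rVert}$ comes from: in the paper the exactly computed negative term carries the factor $\Tr Q=\sum_k\lambda_k\lVert w_k\rVert^2_{E_0}$, while the competing positive term $\sum_k\lambda_k\langle x,w_k\rangle^2_{E_0}/\lVert x\rVert^2_{E_0}=\lVert Q^{1/2}x\rVert^2_{E_0}/\lVert x\rVert^2_{E_0}$ is bounded \emph{above} by $\lVert Q\rVert$ (no infimum over unit vectors is involved), which does not affect the validity of your plan.
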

\begin{proof}
    To show that $\tau=\infty$ $\Prob$-a.s., we make use of the Lyapunov function $V$ introduced in \eqref{eq:Lyapunov_function}. By the It\^o formula, $V(X)$ has the following stochastic differential for $t\in [0,\tau)$,
\begin{align}\label{Ito-LV}
    dV(X_t)= \mathcal{L} V(X_t) dt + \sum_{k\ge 1}\langle \nabla V(X_t), \sigma_k(X_t)\rangle_{E_0} dW^k_t.
\end{align}
Above, $\mathcal{L}$ denotes the generator of this SDE, i.e.
\begin{align*}
    \mathcal{L}\left(V\right)(x)
    &= \langle \nabla V(x), b(x) \rangle_{E_0} + \frac12 \langle \nabla V(x), \left( \sigma\cdot \nabla \sigma\right)(x) \rangle_{E_0}+ \frac12\sum_{k\ge1} \langle \sigma_k(x), D^2V(x)\sigma_k(x) \rangle_{E_0}
\end{align*}
We show that $\mathcal{L}(V)$ is bounded in $E_0$. 
First, note
that $V$ is constant in $B_{R;E_0}$ and so $\mathcal{L}(V)(x)= 0$ for all $x\in B_{R;E_0}$. Next, we consider the region $R\le \lVert x \rVert _{E_0} \le 2R$. By the radial symmetry and the smoothness of $V$, $\nabla V(x)=\lVert \nabla V(x) \rVert_{E_0} \cdot x/\lVert x \rVert_{E_0}$ and both $\lVert \nabla V(x) \rVert_{E_0}$ and $\lVert D^2V(x)\rVert_{L(E_0,E_0)}$ are bounded on $B_{2R;E_0}$.  
Therefore, for $x \in E_0$ with $R\le \lVert x \rVert _{E_0} \le 2R$, exploiting Assumption \ref{Assump:drift}, \eqref{eq:sigma_property_1} and \eqref{eq:sigma_property_4}, we have

\begin{align*}
    \mathcal{L}\left(V\right)(x)
    &= \frac{\lVert \nabla V(x) \rVert_{E_0}}{\lVert x \rVert_{E_0}}\langle  x, b(x) \rangle_{E_0} + \frac12 \langle \nabla V(x), \left( \sigma\cdot \nabla \sigma\right)(x) \rangle_{E_0}\\
    & \quad+ \frac12\sum_{k\ge1} \langle \sigma_k(x), D^2V(x)\sigma_k(x) \rangle_{E_0}\\
    &\le g(\lVert x \rVert_{E_0})\lVert \nabla V(x) \rVert_{E_0} \lVert x \rVert_{E_0} + \frac12 \lVert \nabla V(x) \rVert_{E_0} \lVert\left( \sigma\cdot\nabla\sigma\right)(x) \rVert_{E_0}\\
    & \quad+ \frac12 \lVert D^2V(x)\rVert_{L(E_0,E_0)} \sum_{k\ge1} \lVert\sigma_k(x) \rVert^2_{E_0}\\
    &\le C_R,
\end{align*}
for some $C_R>0$.

Lastly, we consider the most important case $x\in B^c_{2R;E_0}$. In this region $\sigma$ takes the prescribed form given in Assumption \ref{Assump:sigma}-(c). In particular, recalling \eqref{eq:alpha_beta_gamma_delta},

\begin{align*}
    \mathcal{L}\left(V\right)(x)
    &= \langle \nabla V(x), b(x) \rangle_{E_0} + \frac12\sum_{k\ge1} \alpha_k(x)\langle \nabla V(x), w_k \rangle_{E_0} + \frac12\sum_{k\ge1} \beta_k(x)\langle \nabla V(x), x \rangle_{E_0}\\
    &\quad + \frac12\sum_{k\ge1} \langle \sigma_k(x), D^2V(x)\sigma_k(x) \rangle_{E_0}\\
    &\eqqcolon L_1 + L_2 +L_3 + L_4.
\end{align*}
Due to Assumption~\ref{Assump:drift} and \eqref{eq:Lyapunov_derivatives} we obtain
\begin{align*}
    L_1
    &= \langle \nabla V(x), b(x) \rangle_{E_0} = \frac{1}{\lVert x \rVert^2_{E_0}} \langle x, b(x) \rangle_{E_0} \le g(\lVert x \rVert_{E_0}),\\
    L_2
    &=\frac12\sum_{k\ge1} \alpha_k(x)\langle \nabla V(x), w_k \rangle_{E_0}\\
    &= c_N^2\frac12 (\eta - \alpha - \eta\alpha)\lVert x \rVert^{2\eta-4}_{E_0} \sum_{k\ge1} \lambda_k  \langle x,w_k\rangle^2_{E_0}, \\
    L_3
    &=\frac12\sum_{k\ge1} \beta_k(x)\langle \nabla V(x), x \rangle_{E_0}\\
    &= \frac12 c_N^2(\alpha^2\eta-\alpha(\eta-2)) \lVert x \rVert^{2\eta-4}_{E_0} \sum_{k\ge1}\lambda_k \langle x,w_k\rangle^2_{E_0} - \frac12 c_N^2 \alpha \lVert x \rVert^{2\eta-2}_{E_0}\sum_{k\ge1}\lambda_k \lVert w_k \rVert_{E_0}^2,\\
    L_4
    &=\frac12\sum_{k\ge1} \langle \sigma_k(x), D^2V(x)\sigma_k(x) \rangle_{E_0}\\
    &=\frac12 c_N^2(-\alpha^2+2\alpha-2) \lVert x \rVert^{2\eta-4}_{E_0} \sum_{k\ge1}\lambda_k \langle x,w_k\rangle^2_{E_0} + \frac12  c_N^2 \lVert x \rVert^{2\eta-2}_{E_0}\sum_{k\ge1}\lambda_k \lVert w_k \rVert_{E_0}^2.
\end{align*}
Let us introduce $\hat \alpha \coloneqq 1-\alpha$ and $\hat \beta \coloneqq \hat \alpha^2(\eta -1)- \hat\alpha$. Being $\hat \alpha<0$ and $\eta>1$, it follows $\hat \beta>0$. Then, if we combine all the terms, we obtain
\begin{align*}
    \mathcal{L}\left(V\right)(x) 
    &\le g(\|x\|_{E_0})+ \frac12 c_N^2\hat \alpha \lVert x \rVert^{2\eta-2}_{E_0}\sum_{k\ge1}\lambda_k \lVert w_k \rVert_{E_0}^2 + \frac12 c_N^2 \hat \beta \lVert x \rVert^{2\eta-4}_{E_0} \sum_{k\ge1}\lambda_k \langle x,w_k\rangle^2_{E_0}\\
    &= g(\|x\|_{E_0})+ \frac12 c_N^2 \hat \alpha \lVert x \rVert^{2\eta-2}_{E_0} \Tr Q+ \frac12 c_N^2 \hat \beta \lVert x \rVert^{2\eta-2}_{E_0} \frac{\lVert Q^\frac12 x \rVert_{E_0}^2}{\lVert x \rVert_{E_0}^2}\\
    &\le g(\|x\|_{E_0})+ \frac12 c_N^2 \hat \alpha \lVert x \rVert^{2\eta-2}_{E_0} \Tr Q+ \frac12 c_N^2 \hat \beta \lVert x \rVert^{2\eta-2}_{E_0} \lVert Q \rVert.
\end{align*}
With simple computations one can see that
\begin{align*}
     \hat \alpha \Tr Q+ \hat \beta  \lVert Q \rVert<0 \iff \alpha -1<  \frac{1}{\eta-1} \frac{\Tr Q - \lVert Q \rVert}{\lVert Q \rVert},
\end{align*}
Thus, under condition \eqref{eq:alpha_condition}, we have
\begin{align*}
    \delta\coloneqq -\hat \alpha \Tr Q- \hat \beta  \lVert Q \rVert>0.
\end{align*}
Therefore we get, for $x\in B^c_{2R;E_0}$,
\begin{align*}
    \mathcal{L}\left(V\right)(x) \le g(\|x\|_{E_0})-\frac\delta2 c_N^2  \lVert x \rVert^{2\eta-2}_{E_0}.
\end{align*}
Hence, by Assumption \ref{Assump:drift}-(b), when $\eta> m/2$ we obtain, for $x\in B^c_{2R;E_0}$,
\begin{align*}
\mathcal{L}\left(V\right)(x) \le C
\end{align*}
for some constant $C$ (independent of $x$). When $\eta = m/2$, the same conclusion holds if $\frac\delta2 c_N^2>c_D$.\\
Combining all cases, we conclude that $\mathcal{L}(V)$ is bounded on the whole space $E_0$. Since $0<a\leq V$, i.e.\ $V$ is bounded from below by a positive constant, while $\mathcal{L}(V)$ is bounded from above, there must be a positive constant $c$ such that 
\begin{equation}
\mathcal{L}\left(V\right)\le c V.\label{eq:Lyap_bd}
\end{equation}

Let $N> 2R$ and let $\tau_N=\inf\{t\ge 0\mid \|X_t\|_{E_0}\ge N\}$ be the first exit time from $B_{R;E_0}$, that is the first time when $V(X_t)$ hits $\log N$. By definition, $\tau_N\to \tau$ $\Prob$-a.s..
We integrate both sides of \eqref{Ito-LV} over the interval $[0, t \land \tau_N]$, take the expected value, and apply \eqref{eq:Lyap_bd}. We obtain
\begin{align*}    &\E[V(X_{t\land\tau_N})] =\E[V(X_0)]
    +\E\int_0^{t\land \tau_N}\mathcal{L}(V)(X_s)\,ds
    \\ & \qquad
    \le \E[V(X_0)]+\E\int_0^{t\land \tau_N}cV(X_s)\,ds, \quad t\in(0,T).
\end{align*}
By Gronwall's lemma, we get, for every $T>0$,
\begin{equation*}
\E[V(X_{T\land\tau_N})]\le\E[V(X_0)]e^{cT}=V(x_0)e^{cT},
\end{equation*}
which by the Markov inequality implies
\begin{equation} 
    \Prob\left(\sup_{t\in[0,T\wedge \tau)} \|X_t\|_{E_0} \ge N\right)=\Prob\left( \|X_{T\wedge \tau_N}\|_{E_0} \ge N\right)=\Prob\left(V(X_{T\wedge \tau_N})\ge \log N\right)
    \le\frac{\E[V(x_0)] e^{cT}}{\log N}.\label{eq:Vp_Markov}
\end{equation}
Therefore, letting $N \to\infty$, we obtain that $\sup_{t\in [0,T\wedge \tau)}\|X_t\|_{E_0}$ is finite $\Prob$-a.s., showing (by Lemma \ref{lemma:properties_glued_solution}) that $\tau\ge T$ $\Prob$-a.s.. By arbitrariness of $T$, $\tau=\infty$ $\Prob$-a.s.. The proof is complete.
\end{proof}

\begin{theorem}[Pathwise Uniqueness]\label{thm:pathwise_uniqueness}
Let $(X_t)_{t\in [0,\sigma)}$ and $(Y_t)_{t\in [0,\tau)}$ be two solutions of \eqref{eq:SDE_Ito}, relative to the same stochastic basis $( \Omega,  \mathcal{A},(\mathcal{F}_t)_{t},  \Prob,  W)$ and the same initial condition $x_0$. Then $\Prob$-a.s.\  $X_{\cdot\wedge\sigma\wedge\tau}=Y_{\cdot\wedge\sigma\wedge\tau}$.
\end{theorem}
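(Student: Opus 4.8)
The plan is to follow closely the proof of Lemma~\ref{lemma:uniqueness_cutoff}, with the cutoff function $\theta_M$ replaced by a family of localizing stopping times that keep the $E_1$-norms of the two solutions under control. The point is that, once both solutions are forced to stay in a ball of $E_1$, Assumption~\ref{Assump:drift}-(d) (the genuinely ``monotone'' hypothesis on the drift) together with the estimates \eqref{eq:sigma_property_2} and \eqref{eq:sigma_property_5} on the diffusion reduce the problem to a standard It\^o-formula-plus-Gronwall computation for $\|X_t-Y_t\|_{E_0}^2$.

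First I would build the localization. Let $(\sigma_n)_n$, $(\tau_n)_n$ be announcing sequences for the accessible times $\sigma$, $\tau$, and set
\[
\rho_N\coloneqq\inf\{t\ge 0\mid\|X_t\|_{E_1}\ge N\}\ \wedge\ \inf\{t\ge 0\mid\|Y_t\|_{E_1}\ge N\}\ \wedge\ \sigma_N\ \wedge\ \tau_N .
\]
Reasoning exactly as in Lemma~\ref{lemma:stopping_time_existence} --- using the lower semi-continuity of $\|\cdot\|_{E_1}:E_0\to[0,+\infty]$ from Remark~\ref{remark:norm_semi-continuity} and the continuity of the paths in $E_0$ --- each $\rho_N$ is a stopping time; moreover $\rho_N<\sigma\wedge\tau$, $\rho_N\uparrow\sigma\wedge\tau$ $\Prob$-a.s., and $\|X\|_{L^\infty([0,\rho_N];E_1)}\vee\|Y\|_{L^\infty([0,\rho_N];E_1)}\le N$. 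In particular, by Assumption~\ref{Assump:drift}-(a) we have $b(X_t),b(Y_t)\in E_0$ on $[0,\rho_N)$, so all the scalar products written below make sense in $E_0$.

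Next, setting $Z\coloneqq X-Y$ and subtracting the two copies of \eqref{eq:SDE_Ito}, I would apply It\^o's formula to $\|Z_t\|_{E_0}^2$ on $[0,\rho_N)$, just as in Lemma~\ref{lemma:uniqueness_cutoff}, and write its stochastic differential as $d\|Z_t\|_{E_0}^2=J_d\,dt+J_s\,dW_t$. The finite-variation part $J_d$ splits into the drift term $2\langle Z_t,b(X_t)-b(Y_t)\rangle_{E_0}$, bounded by Assumption~\ref{Assump:drift}-(d), the Stratonovich-correction term $\langle Z_t,(\sigma\cdot\nabla\sigma)(X_t)-(\sigma\cdot\nabla\sigma)(Y_t)\rangle_{E_0}$, bounded by \eqref{eq:sigma_property_5}, and the It\^o-correction term $\sum_{k\ge 1}\|\sigma_k(X_t)-\sigma_k(Y_t)\|_{E_0}^2$, bounded by \eqref{eq:sigma_property_2}; since $\|X_t\|_{E_1}\vee\|Y_t\|_{E_1}\le N$ and $E_1\hookrightarrow E_0$, the increasing functions $\chi$ and $h$ evaluated along the paths are $\le C_N$, so $J_d\le C_N\|Z_t\|_{E_0}^2$ on $[0,\rho_N)$. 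The martingale part is controlled with Burkholder--Davis--Gundy, \eqref{eq:sigma_property_2} and Young's inequality in the same chain as in Lemma~\ref{lemma:uniqueness_cutoff}, giving
\[
\E\sup_{t\in[0,S]}\Bigl|\int_0^{t\wedge\rho_N}J_s\,dW\Bigr|\ \le\ \tfrac12\,\E\sup_{t\in[0,S\wedge\rho_N]}\|Z_t\|_{E_0}^2+C_N\int_0^S\E\sup_{t\in[0,r\wedge\rho_N]}\|Z_t\|_{E_0}^2\,dr .
\]
With $Z_0=0$, the function $\varphi_N(S)\coloneqq\E\sup_{t\in[0,S\wedge\rho_N]}\|Z_t\|_{E_0}^2$ then satisfies $\varphi_N(S)\le C_N\int_0^S\varphi_N(r)\,dr$, so Gronwall gives $\varphi_N\equiv 0$, i.e.\ $Z\equiv 0$ on $[0,\rho_N]$ $\Prob$-a.s.; letting $N\to\infty$ with $\rho_N\uparrow\sigma\wedge\tau$ yields $X_{\cdot\wedge\sigma\wedge\tau}=Y_{\cdot\wedge\sigma\wedge\tau}$ $\Prob$-a.s.

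The part requiring the most care is the localization step: one must check that the $E_1$-exit times of $X$ and $Y$ are stopping times even though $\|\cdot\|_{E_1}$ is merely lower semi-continuous on $E_0$, and --- crucially for $\rho_N\uparrow\sigma\wedge\tau$ --- that $\lim_N\inf\{t\mid\|X_t\|_{E_1}\ge N\}\ge\sigma$ (and likewise for $Y$), which follows from the identity $\sup_t\|\cdot\|_{E_1}=\|\cdot\|_{L^\infty(E_1)}$ proved inside Lemma~\ref{lemma:stopping_time_existence}, applied on each interval $[0,\sigma_n]$. Once this bookkeeping is in place, everything downstream is the routine monotonicity-plus-Gronwall scheme already carried out, almost verbatim, in Lemma~\ref{lemma:uniqueness_cutoff}.
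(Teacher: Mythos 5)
Your proposal is correct and is exactly the ``easy adaptation of the proof of Lemma~\ref{lemma:uniqueness_cutoff}'' that the paper invokes: localize via $E_1$-exit times and announcing sequences (justified by Lemma~\ref{lemma:stopping_time_existence} and Remark~\ref{remark:norm_semi-continuity}), then run the same It\^o-plus-BDG-plus-Gronwall estimate on $\|X-Y\|_{E_0}^2$ using Assumption~\ref{Assump:drift}-(d), \eqref{eq:sigma_property_2} and \eqref{eq:sigma_property_5}. Nothing further is needed.
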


\begin{proof}[Proof of Theorem~\ref{thm:pathwise_uniqueness}]
The proof is an easy adaptation of the proof of Lemma \ref{lemma:uniqueness_cutoff}.
\end{proof}

\section{The stochastic Euler equations} \label{section:Euler}

In this section, we address the stochastic Euler equation on a $d$-dimensional smooth (i.e.\ of class $C^\infty$), bounded, and simply connected domain $D$, with $d=2,3$:
\begin{align}\label{eq:stoch_Euler}
\begin{cases}
    du + (u\cdot \nabla) u\, dt + \nabla p \,dt = \sigma(u) \circ dW_t &(t,x)\in [0,T]\times D\\
    \operatorname{div} u=0 &(t,x)\in [0,T]\times D\\
    u\cdot n=0 &(t,x)\in [0,T]\times \partial D\\
    u(0,x)=u_0(x) & x \in D
\end{cases}  
\end{align}
Here, $u:[0,T]\times D\times \Omega\to \R^d$ is the velocity field, $p:[0,T]\times D\times \Omega\to \R$ is the pressure field and $W$ is $Q$-Wiener process on a filtered probability space $(\Omega,\mathcal{A},(\mathcal{F}_t)_t,\Prob)$ satisfying the standard assumption. Moreover, $u_0$ is the deterministic initial datum, $n$ is the unit outer normal on $\partial D$, and the equation $u\cdot n=0$ encodes the slip boundary condition. Both $u$ and $p$ are unknown functions, whereas the noise coefficient $\sigma$ will be specified later. 

Let $s\in\mathbb{N}$, we define the following (standard) function spaces 
\begin{equation*}
\mathcal{H}^s(D)=\left\{u\in H^s(D)\mid \operatorname{div} u=0\text{ on }D\text{, }u\cdot n=0\text{ on }\partial D\right\},
\end{equation*}
where $H^s(D)$ refers to the set of functions whose weak derivatives up to order-$s$ belong to $L^2(D)$. We endow $\mathcal{H}^s(D)$ with the $H^s(D)$-scalar product. Note that $\mathcal{H}^s(D)$ is a separable Hilbert space for every positive integer $s$, as it is a closed subspace of the separable Hilbert space $H^s(D)$. With the notation, we now better specify $W$ and $\sigma$.

\begin{assumption}\label{hp:WienerProcess_Euler}
$W$ is a $Q$-Wiener process with values in $\mathcal{H}^s(D)$ for a trace-class operator $Q:\mathcal{H}^s(D)\to \mathcal{H}^s(D)$. Let $W=\sum_{k\ge1} \sqrt{\lambda_k}w_k W^k$, where $\{w_k\}_{k\in \N}$ is an orthonormal basis of $\mathcal{H}^s(D)$ made of eigenvectors of $Q$, $\{\lambda_k\}_{k\in\N}$ is the associated sequence of non-negative eigenvalues and $\{W^k\}_{k\in\N}$ are independent one dimensional Brownian motions defined on the filtered probability space $(\Omega,\mathcal{A},(\mathcal{F}_t)_t,\mathbb P)$.
We require 
\begin{equation*}
    \sum_{k=1}^\infty \lambda_k \lVert w_k\rVert^2_{\mathcal{H}^{s+1}(D)}<\infty.
\end{equation*}
\end{assumption}
\begin{assumption}\label{hp:noise_Euler}
The diffusion coefficient $\sigma:\mathcal{H}^s(D)\to \mathcal{H}^{s}(D)$ has the form
\begin{align} \label{eq:sigma_Euler}
\sigma(u)=c_N \psi (\lVert u \rVert_{H^s})\lVert u \rVert_{H^s}^\eta \left(I- \alpha \frac{u\otimes u}{\lVert u \rVert^2_{H^s}}\right)
\end{align}
for $\eta> 3/2$, $c_N>0$ (or alternatively $\eta =3/2$, $c_N$ large enough) and $\alpha$ satisfies
\begin{align}\label{eq:alpha_condition_Euler}
    1 < \alpha < 1+ \frac{1}{\eta-1} \frac{\Tr Q - \lVert Q \rVert}{\lVert Q \rVert}.
\end{align}
Moreover, $\psi :\R \to [0,1]$ is a smooth even function such that, for some $R>0$, $\psi(s)\equiv 0$ for $|s|\le R/2$ and $\psi(s)\equiv 1$ for $|s|\ge R$.
\end{assumption}
Recall the Leray projector $\mathcal{P}$, which is the orthogonal projection of $L^2(D)$ onto its closed subspace $\mathcal{H}^0(D)$. Equivalently, for every $v\in L^2(D)$, we have $\mathcal{P}v=v-\mathcal{Q}v$, where $\mathcal{Q}v=-\nabla w$, and $w$ is uniquely defined up to an additive constant by the elliptic Neumann problem
\begin{align*}
\begin{cases}
  -\Delta w = \operatorname{div} v & x\in D,\\
  \;\;\partial_n w =v\cdot n &x\in \partial D.
\end{cases}
\end{align*} 
Applying $\mathcal{P}$ to the first equation in \eqref{eq:stoch_Euler}, we can eliminate the pressure gradient and interpret the stochastic Euler equations as a stochastic evolution equation of type \eqref{eq:SDE_Strat_compact} on $\mathcal{H}^s(D)$ ($s\in \mathbb{N}$), for $b(u) = -\mathcal{P}[(u\cdot\nabla) u]$ and $\sigma(u)$ as in Assumption~\ref{hp:noise_Euler},
\begin{align}\label{eq:stoch_Euler_Cauchy}
\begin{cases}
    du = - \mathcal{P}(u\cdot \nabla) u\, dt + \sigma(u)\circ dW_t &(t,x)\in [0,T]\times D,\\
    u(0,x)=u_0(x) & x \in D.
\end{cases}    
\end{align}
Note that applying $\mathcal{P}$ is not restrictive, since one can retrieve the pressure $p$ a posteriori, starting from a solution $u$ to \eqref{eq:stoch_Euler_Cauchy}, by solving an elliptic problem.

The main result of this section is the following.
\begin{theorem}\label{thm:ApplicationToEuler}
Let $s\in \mathbb{N}$ and $s>d/2+1$. Under the Assumptions \ref{hp:WienerProcess_Euler} and \ref{hp:noise_Euler}, for every deterministic initial condition $u_0 \in \mathcal{H}^{s+1}(D)$, the stochastic Euler equation~\eqref{eq:stoch_Euler_Cauchy} has a unique, global-in-time, strong solution in $L^\infty([0,T];\mathcal{H}^{s+1}(D))\cap C([0,T];\mathcal{H}^{s}(D))$, $\Prob$-a.s., for all $T>0$.
\end{theorem}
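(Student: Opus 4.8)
\emph{Proof proposal.} The plan is to realize \eqref{eq:stoch_Euler_Cauchy} as an instance of the abstract equation \eqref{eq:SDE_Strat_compact} and then invoke Theorem~\ref{thm:main}. One sets $E_1=\mathcal{H}^{s+1}(D)$, $E_0=\mathcal{H}^{s}(D)$, $E_{-1}=\mathcal{H}^{s-1}(D)$, with drift $b(u)=-\mathcal{P}[(u\cdot\nabla)u]$ and noise coefficient $\sigma$ as in Assumption~\ref{hp:noise_Euler}. The compact embedding $E_1\subset\subset E_0$ is Rellich--Kondrachov on the bounded domain $D$, while $E_0\hookrightarrow E_{-1}$ is continuous and dense; the interpolation estimate in Assumption~\ref{Assump:Spaces} is the standard Sobolev interpolation $\|v\|_{H^s}\le\|v\|_{H^{s+1}}^{1/2}\|v\|_{H^{s-1}}^{1/2}$ restricted to divergence-free fields, so $\theta=1/2$. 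Assumption~\ref{Assump:WienerProcess} coincides with Assumption~\ref{hp:WienerProcess_Euler}. For Assumption~\ref{Assump:Projection} I would invoke Lemma~\ref{Euler:basis} to produce a basis $\{e_n\}$ of $\mathcal{H}^{s-1}(D)$ made of smooth divergence-free fields that is \emph{simultaneously} orthogonal for the $\mathcal{H}^{s-1}$-, $\mathcal{H}^{s}$- and $\mathcal{H}^{s+1}$-scalar products --- for instance eigenfunctions of a suitable self-adjoint elliptic operator compatible with $\mathcal{P}$ and the slip boundary condition (a power of the Stokes operator). For such a basis $\Pi_n$ is the orthogonal projection in each of the three scales, so $\|\Pi_n x\|_{E_1}\le\|x\|_{E_1}$ with $C=1$, and, more importantly, $\langle\Pi_n b(x),x\rangle_{E_1}=\langle b(x),x\rangle_{E_1}$ for every $x\in E^n$.

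The substantive part is the verification of Assumption~\ref{Assump:drift}. Parts (a) and (c) are routine: since $H^s$ is a Banach algebra for $s>d/2$, one has $(u\cdot\nabla)u\in H^s$ whenever $u\in H^{s+1}$, hence $b(E_1)\subseteq E_0$; the same product estimate gives continuity and boundedness on balls of $b:E_0\to E_{-1}$; and $b^n$ is a polynomial map of a finite-dimensional space, hence locally Lipschitz. For (b) and (d) I would rely on the Kato--Ponce commutator estimates together with the cancellation $\int_D\big((u\cdot\nabla)D^\gamma u\big)\cdot D^\gamma u\,dx=0$, valid when $\operatorname{div}u=0$ and $u\cdot n=0$ on $\partial D$ (integration by parts produces no boundary contribution). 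This yields $\langle\mathcal{P}[(u\cdot\nabla)u],u\rangle_{H^k}\le C\|u\|_{W^{1,\infty}}\|u\|_{H^k}^2\le C\|u\|_{H^s}\|u\|_{H^k}^2$ for $k=s$ and $k=s+1$, using the embedding $H^s\hookrightarrow W^{1,\infty}$ (here $s>d/2+1$). Taking $k=s$ gives the $E_0$-estimate and taking $k=s+1$ the $E^n$-estimate in Assumption~\ref{Assump:drift}-(b), both with $m=3$ and $g(r)=Cr$, so that $g(\|x\|_{E_0})\le c_D\|x\|_{E_0}^{m-2}$ holds; note that $\eta\ge m/2=3/2$ then matches the exponent prescribed in Assumption~\ref{hp:noise_Euler}. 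For (d) one applies the commutator estimate to $b(u)-b(v)$ to bound $\langle b(u)-b(v),u-v\rangle_{H^s}$ by $\chi(\|u\|_{H^{s+1}}\vee\|v\|_{H^{s+1}})\|u-v\|_{H^s}^2$.

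Assumption~\ref{Assump:sigma} is then immediate: the $\sigma$ of \eqref{eq:sigma_Euler} is precisely the function constructed in Lemma~\ref{lemma:coherence} with $E_0=\mathcal{H}^s(D)$, so parts (a) and (c) hold verbatim, while part (b) follows from that same lemma together with the summability $\sum_k\lambda_k\|w_k\|_{\mathcal{H}^{s+1}}^2<\infty$ granted by Assumption~\ref{hp:WienerProcess_Euler}. With all hypotheses of Theorem~\ref{thm:main} verified and $\alpha$ chosen to satisfy \eqref{eq:alpha_condition_Euler}, the theorem yields, for every $u_0\in\mathcal{H}^{s+1}(D)$, a unique global-in-time strong solution $u\in L^\infty([0,T];\mathcal{H}^{s+1}(D))\cap C([0,T];\mathcal{H}^{s}(D))$; the pressure $p$ is recovered a posteriori by solving the elliptic Neumann problem for $\mathcal{Q}[(u\cdot\nabla)u]$. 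The step I expect to be the main obstacle is making the $\mathcal{H}^{s+1}$-level energy estimate in Assumption~\ref{Assump:drift}-(b) rigorous at the Galerkin level: one must check that the commutator--cancellation structure of the Euler nonlinearity survives the projection $\Pi_n$ and produces no boundary term, which is exactly why the Galerkin basis has to be orthogonal in all three Sobolev scales and compatible with the slip boundary condition.
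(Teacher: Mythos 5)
Your proposal is correct and follows essentially the same route as the paper: same choice of scales $E_{-1},E_0,E_1$, reduction to Theorem~\ref{thm:main}, verification of Assumption~\ref{Assump:drift} via the commutator/cancellation estimate \eqref{eq:bd_drift_2_2} (the paper cites \cite[Lemma 2.1 (c)]{GlaVic2014} where you invoke Kato--Ponce), with $m=3$, $g(r)=Cr$, and the Galerkin identity $\langle\Pi_n b(x),x\rangle_{E_1}=\langle b(x),x\rangle_{E_1}$ coming from the doubly orthogonal basis of Lemma~\ref{Euler:basis}. The only slight overstatement is that Lemma~\ref{Euler:basis} guarantees orthogonality of $\{e_k\}$ in $\mathcal{H}^{s-1}$ and $\mathcal{H}^{s+1}$ only, not also in $\mathcal{H}^{s}$, but that extra property is never used, so the argument stands as written.
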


To prepare for the proof, we first state a few (classical and standard) results.

\begin{lemma}\label{Euler:embed}
 Let $s\in \mathbb{N}$ and $s>1$. Then, $\mathcal{H}^{s+1}(D)$ is compactly embedded in $\mathcal{H}^{s}(D)$, and  $\mathcal{H}^{s}(D)$ is continuously embedded in $\mathcal{H}^{s-1}(D)$. Both embeddings are dense. Moreover, there exist $\theta\in (0,1)$ and $C>0$ such that 
 \begin{equation*}
  \|v\|_{H^{s}(D)}\le C\|v\|_{H^{s+1}(D)}^{1-\theta}\|v\|_{H^{s-1}(D)}^\theta  , \quad \forall v\in \mathcal{H}^{s+1}(D).
 \end{equation*}
\end{lemma}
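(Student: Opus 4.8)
The plan is to reduce all four assertions to classical facts about Sobolev spaces on a smooth bounded domain together with the mapping properties of the Leray projector $\mathcal{P}$. First I would note that $\mathcal{H}^{s+1}(D)$ and $\mathcal{H}^{s}(D)$ are closed subspaces of $H^{s+1}(D)$ and $H^{s}(D)$ respectively: the constraints $\operatorname{div} u=0$ in $D$ and $u\cdot n=0$ on $\partial D$ are preserved under convergence in the respective norms, since $\operatorname{div}$ and the normal trace are continuous in the relevant topologies. The inclusion $H^{s+1}(D)\hookrightarrow H^{s}(D)$ is compact by Rellich--Kondrachov; restricting it to the closed subspace $\mathcal{H}^{s+1}(D)$ (and corestricting to $\mathcal{H}^{s}(D)$, which contains the image) yields a compact operator, giving the first embedding. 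The continuous embedding $\mathcal{H}^{s}(D)\hookrightarrow\mathcal{H}^{s-1}(D)$ is immediate from $\|v\|_{H^{s-1}}\le\|v\|_{H^{s}}$.

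For the two density statements it suffices to show that $C^\infty(\overline D)$ vector fields which are divergence-free and tangent to $\partial D$ are dense in $\mathcal{H}^{k}(D)$ for every $k\in\mathbb{N}$, since this gives density of $\mathcal{H}^{s+1}(D)$ in $\mathcal{H}^{s}(D)$ and of $\mathcal{H}^{s}(D)$ in $\mathcal{H}^{s-1}(D)$ simultaneously. Given $u\in\mathcal{H}^{k}(D)$, I would pick $\phi_n\in C^\infty(\overline D)$ with $\phi_n\to u$ in $H^{k}(D)$ (density of functions smooth up to the boundary), and apply the Leray projector: $\mathcal{P}\phi_n=\phi_n-\nabla w_n$, where $w_n$ solves the Neumann problem with smooth right-hand side and smooth boundary datum, hence $w_n\in C^\infty(\overline D)$ and $\mathcal{P}\phi_n\in C^\infty(\overline D)$ is divergence-free and tangent to $\partial D$. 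Since $\mathcal{P}$ is bounded on $H^{k}(D)$ for a smooth bounded domain (elliptic regularity for the Neumann problem) and $\mathcal{P}u=u$, one gets $\mathcal{P}\phi_n\to u$ in $H^{k}(D)$, which is the required density.

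For the interpolation inequality I would use a Sobolev extension operator: since $D$ is a smooth bounded domain, there is a bounded linear extension $E\colon H^{j}(D)\to H^{j}(\R^d)$ valid simultaneously for $j\in\{s-1,s,s+1\}$ (e.g.\ Stein's total extension). On $\R^d$, Cauchy--Schwarz applied to the Fourier-multiplier representation of the $H^s$-norm gives, for $f=Ev$,
\[
\|f\|_{H^s(\R^d)}^2=\int_{\R^d}(1+|\xi|^2)^s|\hat f(\xi)|^2\,d\xi\le\left(\int_{\R^d}(1+|\xi|^2)^{s-1}|\hat f|^2\,d\xi\right)^{1/2}\left(\int_{\R^d}(1+|\xi|^2)^{s+1}|\hat f|^2\,d\xi\right)^{1/2},
\]
i.e.\ $\|f\|_{H^s(\R^d)}\le\|f\|_{H^{s-1}(\R^d)}^{1/2}\|f\|_{H^{s+1}(\R^d)}^{1/2}$. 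Chaining $\|v\|_{H^s(D)}\le\|Ev\|_{H^s(\R^d)}$ with this bound and the boundedness of $E$ on $H^{s-1}$ and $H^{s+1}$ yields $\|v\|_{H^s(D)}\le C\|v\|_{H^{s-1}(D)}^{1/2}\|v\|_{H^{s+1}(D)}^{1/2}$, which is the claim with $\theta=1/2$.

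The genuinely nontrivial ingredients are the boundedness of the Leray projector on $H^k(D)$ for every $k$ (used for density, and resting on elliptic regularity for the Neumann problem on a smooth domain) and the existence of an all-order bounded Sobolev extension operator for $D$ (used for the interpolation inequality); both are standard, so I expect the main obstacle here to be purely expository — deciding how much of these classical facts to cite versus reprove — rather than any real difficulty.
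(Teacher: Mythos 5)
Your proposal is correct. The paper itself gives no argument here: it simply cites \cite[Lemma 5.4]{BMX2023}, so there is no internal proof to compare against. Your reconstruction is the standard one and all the ingredients check out: closedness of $\mathcal{H}^{k}(D)$ under the divergence and normal-trace constraints, Rellich--Kondrachov restricted to a closed subspace for compactness, density via smoothing followed by the Leray projector (using that $\mathcal{P}$ is bounded on $H^{k}(D)$ by elliptic regularity for the Neumann problem, fixes $\mathcal{H}^{k}(D)$, and sends $C^\infty(\overline D)$ fields to smooth divergence-free tangent fields --- note the compatibility condition $\int_D \operatorname{div}\phi_n = \int_{\partial D}\phi_n\cdot n$ holds automatically), and the interpolation inequality with $\theta=1/2$ via a total extension operator and Cauchy--Schwarz on the Fourier side. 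The only point worth making explicit in a write-up is the one you already flag: the extension operator must be bounded simultaneously on $H^{s-1}$, $H^{s}$, $H^{s+1}$ so that the same extended function appears in all three norms.
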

\begin{proof}
See \cite[Lemma 5.4]{BMX2023}.  
\end{proof}

\begin{lemma}\label{Euler:basis}
 Let $s\in \mathbb{N}$ and $s>1$. Then, $\mathcal{H}^{s-1}(D)$ has an orthonormal basis $\{e_k\}_{k\in \N_+}$ consisting of elements in $\mathcal{H}^{s+2}(D)$. Moreover, $\{e_k\}_{k\in \N_+}$ is also orthogonal in $\mathcal{H}^{s+1}(D)$.
\end{lemma}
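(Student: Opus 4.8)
The plan is to take $\{e_k\}$ as the eigenfunctions of the compact embedding $\mathcal{H}^{s+1}(D)\hookrightarrow\mathcal{H}^{s-1}(D)$ provided by Lemma~\ref{Euler:embed}, and then to raise their regularity from $\mathcal{H}^{s+1}(D)$ to $\mathcal{H}^{s+2}(D)$ by elliptic regularity. Both $\mathcal{H}^{s-1}(D)$ and $\mathcal{H}^{s+1}(D)$ are separable Hilbert spaces with the $H^{s-1}$, resp.\ $H^{s+1}$, scalar product, and the embedding between them is compact, continuous and dense.

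First I would introduce the operator $T\colon\mathcal{H}^{s-1}(D)\to\mathcal{H}^{s+1}(D)$ characterized by
\[
\langle Tf,v\rangle_{H^{s+1}}=\langle f,v\rangle_{H^{s-1}}\qquad\text{for all }v\in\mathcal{H}^{s+1}(D),
\]
which is well defined and bounded because, by the continuous embedding, $v\mapsto\langle f,v\rangle_{H^{s-1}}$ is a bounded functional on $\mathcal{H}^{s+1}(D)$. Viewing $T$ as an operator $K$ from $\mathcal{H}^{s-1}(D)$ to itself (composing with the inclusion), $K$ is compact because it factors through the compact embedding, and a routine check using the defining identity (test with $v=Tg$) shows that $K$ is self-adjoint, positive, and injective, since $Kf=0$ forces $\langle f,v\rangle_{H^{s-1}}=0$ on the dense subspace $\mathcal{H}^{s+1}(D)$. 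By the spectral theorem for compact self-adjoint operators, there is an orthonormal basis $\{e_k\}_{k\in\N_+}$ of $\mathcal{H}^{s-1}(D)$ with $Ke_k=\mu_ke_k$, $\mu_k>0$, $\mu_k\to0$. Then $e_k=\mu_k^{-1}Te_k\in\mathcal{H}^{s+1}(D)$, and for $j\neq k$,
\[
\langle e_j,e_k\rangle_{H^{s+1}}=\mu_j^{-1}\langle Te_j,e_k\rangle_{H^{s+1}}=\mu_j^{-1}\langle e_j,e_k\rangle_{H^{s-1}}=0,
\]
so $\{e_k\}$ is also orthogonal in $\mathcal{H}^{s+1}(D)$, with $\|e_k\|_{H^{s+1}}^2=\mu_k^{-1}$.

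It remains to improve $e_k\in\mathcal{H}^{s+1}(D)$ to $e_k\in\mathcal{H}^{s+2}(D)$, and this is the only nontrivial point, which I expect to be the main obstacle. The eigenrelation $\langle e_k,v\rangle_{H^{s-1}}=\mu_k\langle e_k,v\rangle_{H^{s+1}}$ for all $v\in\mathcal{H}^{s+1}(D)$ is the weak form of a constant-coefficient elliptic boundary value problem of order $2(s+1)$ for $e_k$ on the smooth domain $D$ --- a generalized Stokes-type system, with the constraints $\operatorname{div}e_k=0$ and $e_k\cdot n=0$ encoded by a scalar pressure-like Lagrange multiplier. Invoking Agmon--Douglis--Nirenberg elliptic regularity for such systems on smooth bounded domains and bootstrapping from $e_k\in H^{s+1}$ gives $e_k\in C^\infty(\overline D)$, hence in particular $e_k\in\mathcal{H}^{s+2}(D)$, which completes the proof. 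The difficulty lies purely in writing the Euler--Lagrange system for the constrained quadratic form and checking that the appropriate interior and boundary regularity applies; everything else is the soft combination of Riesz representation and the spectral theorem. As a consistency check, in the scalar one-dimensional toy case the eigenrelation reduces to a constant-coefficient linear ODE of order $2(s+1)$ with natural boundary conditions, whose solutions are manifestly smooth.
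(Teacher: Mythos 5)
Your construction --- Riesz representation of the $H^{s-1}$ inner product on $\mathcal{H}^{s+1}(D)$, the spectral theorem for the resulting compact, positive, self-adjoint, injective operator (giving eigenfunctions orthogonal in both inner products), and elliptic regularity to push the eigenfunctions from $\mathcal{H}^{s+1}(D)$ to $\mathcal{H}^{s+2}(D)$ --- is essentially the same argument the paper relies on: the paper does not prove the lemma itself but defers to \cite{BMX2023} (Lemma 5.5 and Remark 5.6), where the basis is produced by exactly this compact-embedding/spectral construction and the extra smoothness by regularity for the associated Euler--Lagrange system. The one step you leave as a sketch, namely verifying boundary regularity for the constrained higher-order Stokes-type system, is precisely the step the paper outsources to the reference, so your proposal is a faithful reconstruction rather than a complete self-contained proof.
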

\begin{proof}
    See \cite[Lemma 5.5 and Remark 5.6]{BMX2023}.
\end{proof}
Next, we recall a few classical estimates (see, for instance, \cite{GlaVic2014}). Let $m$ be an integer satisfying $m>d/2$. Then,
\begin{itemize}
    \item (The Moser Estimate)~$ \|u v\|_{H^{m}} \le C (\|u\|_{L^\infty}\|v\|_{H^{m}} +\|u\|_{H^{m}}\|v\|_{L^{\infty}})$,
    \item (The Sobolev Embedding)~$\|u\|_{L^\infty}\leq C\|u\|_{H^{m}}$ and $\|u\|_{W^{1,\infty}}\leq C\|u\|_{H^{m+1}}$.
\end{itemize}
Hence, for all integers $m$ such that $m>d/2$, we infer 
\begin{align}
    &\|(u\cdot\nabla)v\|_{H^{m}} \le C (\|u\|_{L^\infty}\|v\|_{H^{m+1}} +\|u\|_{H^{m}}\|v\|_{W^{1,\infty}})\le C \|u\|_{H^{m+1}}\|v\|_{H^{m+1}}\label{eq:bd_drift_1} 
\end{align}
whenever $u,v\in \mathcal{H}^{m+1}(D)$. If $m>d/2+1$, then by~\cite[Lemma 2.1 (c)]{GlaVic2014},
\begin{align}
  \langle \mathcal{P}[(u\cdot \nabla)v], v\rangle_{H^m} \le C(\|u\|_{W^{1,\infty}}\|v\|_{H^{m}} +\|u\|_{H^{m}}\|v\|_{W^{1,\infty}})\|v\|_{H^m},\label{eq:bd_drift_2_2}
\end{align}
when $u\in \mathcal{H}^{m}(D)$ and $v\in \mathcal{H}^{m+1}(D)$. Note that \eqref{eq:bd_drift_2_2} is a result of the commutator estimates.

\begin{proof}[Proof of Theorem~\ref{thm:ApplicationToEuler}]
The theorem is an application of the main result, i.e.\ Theorem \ref{thm:main}, with $E_1=\mathcal H^{s+1}(D)$, $E_0=\mathcal H^{s}(D)$ and $E_{-1}=\mathcal H^{s-1}(D)$. We need to check Assumptions \ref{Assump:WienerProcess}, \ref{Assump:Spaces}, \ref{Assump:Projection}, \ref{Assump:drift} and \ref{Assump:sigma}. However, thanks to Assumption \ref{hp:WienerProcess_Euler}, Lemma \ref{Euler:embed}, Lemma \ref{Euler:basis} and Assumption \ref{hp:noise_Euler} (paired with Lemma \ref{lemma:coherence}), we are only left to check the assumptions on the drift coefficient, i.e.\ Assumption \ref{Assump:drift}.

Assumption~\ref{Assump:drift}-(a) follows from the continuity of the Leray projector, the estimate~\eqref{eq:bd_drift_1} and a simple triangle inequality (provided $s>d/2+1$). We move to Assumption \ref{Assump:drift}-(b). Exploiting the orthogonality of $\{e_k\}_{k\in \N_+}$ in both $\langle \cdot,\cdot\rangle_{H^{s-1}}$ and $\langle \cdot,\cdot\rangle_{H^{s+1}}$-scalar products and applying~\eqref{eq:bd_drift_2_2} with $m=s+1$, we obtain for every $u\in E^n$ (recall that $u\in E^n$ implies $u\in \mathcal{H}^{s+2}$, see Lemma \ref{Euler:basis}) that
\begin{align*}
    \langle b^n(u), u \rangle_{H^{s+1}}
    = \langle \Pi_n \mathcal{P} (u\cdot\nabla u) , u \rangle_{H^{s+1}}
    = \langle \mathcal{P} (u\cdot\nabla u), u \rangle_{H^{s+1}}
    \le  C \lVert u \rVert_{W^{1,\infty}} \lVert u \rVert_{H^{s+1}}^2,
\end{align*}
and similarly, for $u\in \mathcal{H}^{s+1}$,
\begin{align*}
    \langle b(u), u \rangle_{H^{s}}
    = \langle \mathcal{P} (u\cdot\nabla u), u \rangle_{H^{s}}
    \le  C \lVert u \rVert_{W^{1,\infty}} \lVert u \rVert_{H^{s}}^2.
\end{align*}
Hence, exploiting the Sobolev embedding of $H^{s}(D)$ into $W^{1,\infty}(D)$, Assumption~\ref{Assump:drift}-(b) is met with
\begin{equation} \label{eq:g(x)}
    g(\lVert u \rVert_{H^{s}})= c_D\lVert u \rVert_{H^{s}},
\end{equation}
for some $c_D>0$, so that we can choose $m=3$.
Assumption \ref{Assump:drift}-(c) holds since $b^n$ is quadratic. Finally, utilizing \eqref{eq:bd_drift_1}, \eqref{eq:bd_drift_2_2}, the properties of $\mathcal{P}$ and the Sobolev embeddings, we have
\begin{align*}
   & \langle \mathcal{P}[v\cdot \nabla v-w\cdot \nabla w],v-w\rangle_{H^{s}}
    = \langle \mathcal{P}[v\cdot \nabla(v-w)],v-w\rangle_{H^{s}} 
     + \langle \mathcal{P}[(v-w)\cdot \nabla w],v-w\rangle_{H^{s}}
     \\ & \qquad\qquad\qquad  
     \leq C
     (\|v\|_{W^{1,\infty}}\|v-w\|_{H^{s}} +\|v\|_{H^{s}}\|v-w\|_{W^{1,\infty}})\|v-w\|_{H^{s}}
      \\ & \qquad\qquad\qquad\quad  
      +C
     (\|v-w\|_{L^{\infty}}\|w\|_{H^{s+1}} +\|v-w\|_{H^{s}}\|w\|_{W^{1,\infty}})\|v-w\|_{H^{s}}
     \\ & \qquad\qquad \qquad  
     \leq
      C (\lVert v\rVert_{H^{s}} +\lVert w\rVert_{H^{s+1}} )\lVert v-w\rVert_{H^{s}}^2,
\end{align*}
which fulfils Assumption \ref{Assump:drift}-(d) if $s>d/2+1$.
\end{proof}

\appendix

\section{Proofs of technical lemmas} \label{section:appendix_proofs}

\subsection{Proof of Lemma \ref{lemma:noise_properties}} \label{subsection:proof_lemma_noise_properties}
\begin{proof}
    First, note that, for every $k\ge 1$,
    \begin{align*}
        \| \left(\sigma\cdot\nabla \sigma\right)_k(x)\|_{E_0}
        &=\lambda_k \|[\sigma(x)[w_k]\cdot\nabla (\sigma(x)[w_k])\|_{E_0}\\
        &\le \lambda_k \|\sigma(x)\|_{L(E_0,E_0)} \|D\sigma(x)\|_{L(E_0,L(E_0,E_0))},
    \end{align*}
    and, by the mean value theorem,
    \begin{align*}
        \Vert   \sigma_k(x)-\sigma_k(y)\Vert_{E_0}
        &=\sqrt{\lambda_k}\Vert  (\sigma(x)-\sigma(y))[w_k]\Vert_{E_0}\\
        &\le \sqrt{\lambda_k}\Vert  \sigma(x)-\sigma(y)\Vert_{L(E_0,E_0)} \\
        &\le  \sqrt{\lambda_k }\Vert   x-y\Vert_{E_0}\sup_{z\in B_{\lVert x \rVert \vee \lVert y \rVert;E_0}}\Vert  D\sigma(z)\Vert_{L(E_0,L(E_0,E_0))},
    \end{align*}
    \begin{align}\label{eq:estimate_sigma_nabla_sigma}
        \begin{split}
        \|\left(\sigma\cdot\nabla \sigma\right)_k(x)-\left(\sigma\cdot\nabla \sigma\right)_k(y)\|_{E_0}
        &=\lambda_k \|\sigma(x)[w_k]\cdot\nabla (\sigma(x)[w_k])-\sigma(y)[w_k]\cdot\nabla (\sigma(y)[w_k])\|_{E_0}\\
        &\le\lambda_k \|(\sigma(x)-\sigma(y))[w_k]\cdot\nabla (\sigma(x)[w_k])\|_{E_0}\\
        &\qquad+\lambda_k \|\sigma(y)[w_k]\cdot\nabla (\sigma(x)[w_k]-\sigma(y)[w_k])\|_{E_0}\\
        &\le \lambda_k \Vert   x-y\Vert_{E_0}\sup_{z\in B_{\lVert x \rVert \vee \lVert y \rVert;E_0}}\Vert  D\sigma(z)\Vert^2_{L(E_0,L(E_0,E_0))}\\
        &\qquad + \lambda_k \Vert   x-y\Vert_{E_0}\sup_{z\in B_{\lVert x \rVert \vee \lVert y \rVert;E_0}}\Vert  \sigma(z)\Vert_{L(E_0,L(E_0,E_0))}\cdot \\
        &\qquad \qquad \qquad \cdot \sup_{z\in B_{\lVert x \rVert \vee \lVert y \rVert;E_0}}\Vert  D^2\sigma(z)\Vert_{L(E_0,L(E_0,L(E_0,E_0)))}.
        \end{split}
    \end{align}
    Therefore, there exists a non-increasing function $h:\mathbb{R}\to \mathbb{R}_+$ such that
    \begin{align*}
                &\sum_{k\ge 1} \Vert  \sigma_k(x)\Vert^2_{E_0}
                \le \Tr Q \,\|\sigma(x)\|^2 _{L(E_0,E_0)} \le h (\Vert x\Vert_{E_0} ),\\
                & \sum_{k\ge 1} \Vert   \sigma_k(x)-\sigma_k(y)\Vert^2_{E_0}
                \le \Tr Q \, \Vert   x-y\Vert^2_{E_0}\sup_{z\in B_{\lVert x \rVert \vee \lVert y \rVert;E_0}}\Vert  D\sigma(z)\Vert^2_{L(E_0,L(E_0,E_0))}\\
                &\qquad\qquad\qquad\qquad\qquad\leq h (\Vert x\Vert_{E_0}\vee\Vert y\Vert_{E_0} )\Vert x-y\Vert^2_{E_0},\\
                &\sum_k\sup_{x\in B_{R;E_0}}\|\left(\sigma\cdot\nabla \sigma\right)_k(x)\|_{E_0} \le \Tr Q \sup_{x\in B_{R;E_0}} \left(\|D\sigma(x)\|_{L(E_0,L(E_0,E_0))}\|\sigma(x)\|_{L(E_0,E_0)}\right) \le h(R),\\
                & \Big\Vert \left( \sigma\cdot \nabla \sigma\right)(x)  \Big\Vert_{E_0} 
                \le \sum_{k\ge 1} \Vert \left(\sigma\cdot \nabla \sigma\right)_k(x)  \Vert_{E_0} 
                \le \Tr Q \, \|\sigma(x)\|_{L(E_0,E_0)} \|D\sigma(x)\|_{L(E_0,L(E_0,E_0))}\\
                &\qquad\qquad\qquad\qquad\qquad\;\, \le h (\Vert x\Vert_{E_0}),\\
                & \Big\Vert  \left( \sigma\cdot \nabla \sigma\right)(x) - \left(\sigma\cdot \nabla \sigma\right)(y) \Big\Vert_{E_0} 
                \le
                \sum_{k\ge 1} \Vert \left( \sigma\cdot \nabla \sigma\right)_k(x) - \left(\sigma\cdot \nabla \sigma\right)_k(y) \Vert_{E_0} \\
                &\qquad\qquad\qquad\qquad\qquad \qquad\qquad\qquad\qquad\;\;\,\le h (\Vert x\Vert_{E_0}\vee\Vert y\Vert_{E_0} )\Vert x-y\Vert_{E_0}.
    \end{align*}
    Concerning the locally Lipschitzness property for $\sigma^n$, taking $x,y \in E^n$ with $\lVert x \rVert_{E_0}, \lVert y \rVert_{E_0} \le R$,
    \begin{align*}
        \lVert \sigma^n(x)-\sigma^n(y)\rVert_{L(\R^n,E^n)}
        &=\sup_{k\in \{1,\dots,n\}} \lVert \Pi_n (\sigma_k(x)-\sigma_k(y)) \rVert_{E^n} 
        \le \sup_{k\in \{1,\dots,n\}}\sqrt{\lambda_k} \lVert (\sigma(x)-\sigma(y))[w_k] \rVert_{E_{-1}}\\
        &\lesssim \sup_{k\in \{1,\dots,n\}}\sqrt{\lambda_k}  \lVert (\sigma(x)-\sigma(y))[w_k] \rVert_{E_{0}} 
        \le \lVert Q \rVert^\frac12 \,\lVert \sigma(x)-\sigma(y) \rVert_{L(E_{0},E_{0})}\\
        &\le \lVert Q \rVert^\frac12 \, \Vert   x-y\Vert_{E_0}\sup_{z\in B_{\lVert x \rVert \vee \lVert y \rVert;E_0}}\Vert  D\sigma(z)\Vert_{L(E_0,L(E_0,E_0))}\\
        &\le  \lVert Q \rVert^\frac12 \, \Vert   x-y\Vert_{E_0} h(R)\\
        &\le C_R \Vert   x-y\Vert_{E^n}
    \end{align*}
    where we exploited the equivalence of norms in finite dimensional vector spaces.
    Finally, concerning the locally Lipschitzness property for $\left(\sigma \cdot\nabla \sigma \right)^n$ is sufficient to show the same property for $\left(\sigma \cdot\nabla \sigma\right)^n_k$ for every $k$. We have, making use of \eqref{eq:estimate_sigma_nabla_sigma},
    \begin{align*}
        \lVert \Pi_n ((\sigma \cdot\nabla \sigma )_k(x) - (\sigma \cdot\nabla \sigma )_k(y)) \rVert_{E^n} 
        &\le \lVert (\sigma \cdot\nabla \sigma )_k(x) - (\sigma \cdot\nabla \sigma )_k(y) \rVert_{E_{-1}}\\
        &\lesssim \lVert (\sigma \cdot\nabla \sigma )_k(x) - (\sigma \cdot\nabla \sigma )_k(y) \rVert_{E_{0}}\\
        &\le  \lVert Q \rVert \, \Vert   x-y\Vert_{E_0} h(R)\\
        &\le C_R \Vert   x-y\Vert_{E^n}.
    \end{align*}
The proof is complete.
\end{proof}

\subsection{Proof of Lemma \ref{lemma:coherence}} \label{subsection:proof_lemma_coherence}
\begin{proof}
Recall that $\sigma$ takes the form given in \eqref{eq:sigma} only for $x\in B_{R;E_0}$ for some $R>0$. In this region, $\sigma$ is $C^\infty$ and its norm and the norms of its derivatives only depend on the norm of $x$.\\
We have to show Assumption \ref{Assump:sigma}-(b). First, notice that, recalling \eqref{eq:alpha_beta_gamma_delta}, for $x\in E^n$
\begin{align*}
    \sum_{k=1}^n \langle x, \left(\sigma\cdot \nabla \sigma\right)^n_k(x)\rangle_{E_1} 
    &= \sum_{k=1}^n \lambda_k c_N^2(\eta - \alpha - \eta\alpha)\lVert x \rVert^{2\eta-2}_{E_0}\langle x,w_k\rangle_{E_0} \langle x, \Pi_n w_k\rangle_{E_1}\\
    &\qquad + \sum_{k=1}^n \lambda_k c_N^2\lVert x \rVert^{2\eta-2}_{E_0}\Bigl(\lVert x \rVert^{-2}_{E_0} \langle x,w_k\rangle^2_{E_0}(\alpha^2\eta-\alpha(\eta-2)) -\alpha\lVert w_k \rVert^2_{E_0}\Bigr) \lVert x \rVert^2_{E_1}. 
\end{align*}
Exploiting Cauchy-Schwarz inequality and Assumptions \ref{Assump:WienerProcess}, \ref{Assump:Spaces}, \ref{Assump:Projection},
\begin{align*}
   &\sum_{k=1}^n \lambda_k c_N^2(\eta - \alpha - \eta\alpha)\lVert x \rVert^{2\eta-2}_{E_0}\langle x,w_k\rangle_{E_0} \langle x, \Pi_n w_k\rangle_{E_1}  \\ 
    &\qquad\le C_{\alpha,\eta}  \lVert x \rVert^{2\eta-2}_{E_0} \lVert x \rVert^2_{E_1}\sum_{k=1}^n \lambda_k \lVert w_k\rVert^2_{E_1}  \\
    &\qquad\le  C_{\alpha,\eta}  \lVert x \rVert^{2\eta-2}_{E_0} \lVert x \rVert^2_{E_1}.
\end{align*}
Similarly,
\begin{align*}
    &\sum_{k=1}^n \lambda_k c_N^2\lVert x \rVert^{2\eta-2}_{E_0}\Bigl(\lVert x \rVert^{-2}_{E_0} \langle x,w_k\rangle^2_{E_0}(\alpha^2\eta-\alpha(\eta-2)) -\alpha\lVert w_k \rVert^2_{E_0}\Bigr) \lVert x \rVert^2_{E_1}  \\
    & \qquad\le C_{\alpha,\eta}  \lVert x \rVert^{2\eta-2}_{E_0} \lVert x \rVert^2_{E_1} \sum_{k=1}^n \lambda_k   \\
    & \qquad\le C_{\alpha,\eta} \Tr Q\,  \lVert x \rVert^{2\eta-2}_{E_0} \lVert x \rVert^2_{E_1}.
\end{align*}
On the other hand, the second part of Assumption \ref{Assump:sigma}-(b) can be shown in the following way
\begin{align*}
\begin{split}
    &\sum_{k=1}^n \lambda_k c_N^2\rVert x \rVert^{2\eta}_{E_0} \left\Vert \Pi_n w_k - \alpha \frac{1}{\lVert x \rVert_{E_0}^2}x \langle x, w_k\rangle_{E_0} \right\Vert_{E_1}^2  \\
    & \le C  \lVert x \rVert^{2\eta-2}_{E_0} \lVert x \rVert^2_{E_1} \sum_{k=1}^n \lambda_k \lVert w_k \rVert^2_{E_1} +  C  \lVert x \rVert^{2\eta-2}_{E_0} \lVert x \rVert^2_{E_1} \sum_{k=1}^n \lambda_k \\
    & \le C  \lVert x \rVert^{2\eta-2}_{E_0} \lVert x \rVert^2_{E_1} +  C \Tr Q \lVert x \rVert^{2\eta-2}_{E_0} \lVert x \rVert^2_{E_1} \\
    & \le C \lVert x \rVert^{2\eta-2}_{E_0} \lVert x \rVert^2_{E_1}.
\end{split}
\end{align*}
Now, we show that $\sigma$ as defined in \eqref{eq:sigma_full_space} is a suitable example of $\sigma$ defined in the whole $E_0$. For notational convenience, we relabel $\sigma$ with $\hat \sigma$. In this way, we have $\hat \sigma(x) = \psi(\lVert x \rVert_{E_0}) \sigma(x)$, for $\sigma(x)$ which is coherent with all the previous computations. The only non trivial part to show is that $\forall x \in E^n$ and $\forall n\in \N$
\begin{align*}
    \langle x, \left(\hat\sigma\cdot \nabla \hat\sigma\right)^n(x)\rangle_{E_1} \leq
f (\Vert x\Vert_{E_0} ) \lVert x \rVert_{E_1}^2,
\end{align*}
due to the presence of the gradient. In particular, by Leibniz rule
\begin{align*}
    \left(\hat\sigma\cdot \nabla \hat\sigma\right)^n(x) = \psi^2(\Vert x\Vert_{E_0})\left(\sigma\cdot \nabla \sigma\right)^n(x) + \psi(\Vert x\Vert_{E_0}) \frac{\psi'(\Vert x\Vert_{E_0})}{\Vert x\Vert_{E_0}} \sum_{k=1}^n  \langle  x, \sigma_k(x) \rangle_{E_0} \Pi_n \sigma_k(x),
\end{align*}
so that
\begin{align}\label{eq:computation_for_sgs}
\begin{split}
    \langle x, \left(\hat\sigma\cdot \nabla \hat\sigma\right)^n(x)\rangle_{E_1}
    &= \psi^2(\Vert x\Vert_{E_0})\langle x, \left(\sigma\cdot \nabla \sigma\right)^n(x) \rangle_{E_1} \\
    &\quad+ \psi(\Vert x\Vert_{E_0}) \frac{\psi'(\Vert x\Vert_{E_0})}{\Vert x\Vert_{E_0}} \sum_{k=1}^n  \langle  x, \sigma_k(x) \rangle_{E_0} \langle x, \Pi_n \sigma_k(x) \rangle_{E_1}.
\end{split}
\end{align}
Recalling that $\psi$ vanishes close to the origin, the bound for the first addendum in the right hand side of \eqref{eq:computation_for_sgs} can be bound exactly as in the previous computations. We focus on the second addendum. By Cauchy-Schwarz inequality
\begin{align*}
    \psi(\Vert x\Vert_{E_0}) &\frac{\psi'(\Vert x\Vert_{E_0})}{\Vert x\Vert_{E_0}} \sum_{k=1}^n  \langle  x, \sigma_k(x) \rangle_{E_0} \langle x, \Pi_n \sigma_k(x) \rangle_{E_1}\\
    &\le  \psi(\Vert x\Vert_{E_0}) \psi'(\Vert x\Vert_{E_0}) \lVert \sigma(x) \rVert_{L(E_0,E_0)}  \rVert x \rVert_{E_1} \sum_{k=1}^n \sqrt{\lambda_k}  \rVert \sigma^n_k (x)\rVert_{E_1}\\
    &\le  \psi(\Vert x\Vert_{E_0}) \psi'(\Vert x\Vert_{E_0}) \lVert \sigma(x) \rVert_{L(E_0,E_0)}  \rVert x \rVert_{E_1}  \left( \Tr Q\right)^\frac12 \left( \sum_{k=1}^n   \rVert \sigma^n_k (x)\rVert^2_{E_1}\right)^\frac12,
\end{align*}
therefore, recalling that both $\psi$ and $\psi'$ vanish close to the origin, we conclude the proof exploiting the bound already obtained in the previous computation for the term $\sum_{k=1}^n   \rVert \sigma^n_k (x)\rVert^2_{E_1}$.
\end{proof}

\printbibliography

@article {Arnold-Crauel-Wihstutz,
    AUTHOR = {Arnold, L. and Crauel, H. and Wihstutz, V.},
     TITLE = {Stabilization of linear systems by noise},
   JOURNAL = {SIAM J. Control Optim.},
  FJOURNAL = {SIAM Journal on Control and Optimization},
    VOLUME = {21},
      YEAR = {1983},
    NUMBER = {3},
     PAGES = {451--461},
      ISSN = {0363-0129},
   MRCLASS = {93E15 (34F05)},
  MRNUMBER = {696907},
MRREVIEWER = {O.\ K.\ Zakusilo},
       DOI = {10.1137/0321027},
       URL = {https://doi.org/10.1137/0321027},
}

@book {Karatzas-Shreve,
    AUTHOR = {Karatzas, Ioannis and Shreve, Steven E.},
     TITLE = {Brownian motion and stochastic calculus},
    SERIES = {Graduate Texts in Mathematics},
    VOLUME = {113},
   EDITION = {Second},
 PUBLISHER = {Springer-Verlag, New York},
      YEAR = {1991},
     PAGES = {xxiv+470},
      ISBN = {0-387-97655-8},
   MRCLASS = {60J65 (35K99 35R60 60G44 60H10 60J60)},
  MRNUMBER = {1121940},
       DOI = {10.1007/978-1-4612-0949-2},
       URL = {https://doi.org/10.1007/978-1-4612-0949-2},
}

@article {BarRocZha2017,
    AUTHOR = {Barbu, Viorel and R\"{o}ckner, Michael and Zhang, Deng},
     TITLE = {Stochastic nonlinear {S}chr\"{o}dinger equations: no blow-up in
              the non-conservative case},
   JOURNAL = {J. Differential Equations},
  FJOURNAL = {Journal of Differential Equations},
    VOLUME = {263},
      YEAR = {2017},
    NUMBER = {11},
     PAGES = {7919--7940},
      ISSN = {0022-0396},
   MRCLASS = {60H15 (35B44 35Q55 60H30)},
  MRNUMBER = {3705702},
MRREVIEWER = {Feng-Yu Wang},
       DOI = {10.1016/j.jde.2017.08.030},
       URL = {https://doi.org/10.1016/j.jde.2017.08.030},
}

@article {Kim2009,
    AUTHOR = {Kim, Jong Uhn},
     TITLE = {Existence of a local smooth solution in probability to the
              stochastic {E}uler equations in {${\bf R}^3$}},
   JOURNAL = {J. Funct. Anal.},
  FJOURNAL = {Journal of Functional Analysis},
    VOLUME = {256},
      YEAR = {2009},
    NUMBER = {11},
     PAGES = {3660--3687},
      ISSN = {0022-1236},
   MRCLASS = {60H10 (35R60 76B03 76M35)},
  MRNUMBER = {2514056},
MRREVIEWER = {Zhongmin Qian},
       DOI = {10.1016/j.jfa.2009.03.012},
       URL = {https://doi.org/10.1016/j.jfa.2009.03.012},
}

@article {GlaVic2014,
    AUTHOR = {Glatt-Holtz, Nathan E. and Vicol, Vlad C.},
     TITLE = {Local and global existence of smooth solutions for the
              stochastic {E}uler equations with multiplicative noise},
   JOURNAL = {Ann. Probab.},
  FJOURNAL = {The Annals of Probability},
    VOLUME = {42},
      YEAR = {2014},
    NUMBER = {1},
     PAGES = {80--145},
      ISSN = {0091-1798},
   MRCLASS = {60H15 (35B65 35Q31 35R60 76B03)},
  MRNUMBER = {3161482},
MRREVIEWER = {Paul Andr\'{e} Razafimandimby},
       DOI = {10.1214/12-AOP773},
       URL = {https://doi.org/10.1214/12-AOP773},
}

@article {BrzPes2001,
    AUTHOR = {Brze\'{z}niak, Zdzis\l aw and Peszat, Szymon},
     TITLE = {Stochastic two dimensional {E}uler equations},
   JOURNAL = {Ann. Probab.},
  FJOURNAL = {The Annals of Probability},
    VOLUME = {29},
      YEAR = {2001},
    NUMBER = {4},
     PAGES = {1796--1832},
      ISSN = {0091-1798},
   MRCLASS = {60G60 (35Q30 35R60 60J25 76B99 76M35)},
  MRNUMBER = {1880243},
MRREVIEWER = {Yimin Xiao},
       DOI = {10.1214/aop/1015345773},
       URL = {https://doi.org/10.1214/aop/1015345773},
}

@article {BesFla1999,
    AUTHOR = {Bessaih, Hakima and Flandoli, Franco},
     TITLE = {{$2$}-{D} {E}uler equation perturbed by noise},
   JOURNAL = {NoDEA Nonlinear Differential Equations Appl.},
  FJOURNAL = {NoDEA. Nonlinear Differential Equations and Applications},
    VOLUME = {6},
      YEAR = {1999},
    NUMBER = {1},
     PAGES = {35--54},
      ISSN = {1021-9722},
   MRCLASS = {60H15 (35Q30 35R60 76B03 76M35)},
  MRNUMBER = {1674779},
MRREVIEWER = {Marek Capi\'{n}ski},
       DOI = {10.1007/s000300050063},
       URL = {https://doi.org/10.1007/s000300050063},
}

@misc{CheHou2022,
Author = {Jiajie Chen and Thomas Y. Hou},
Title = {Stable nearly self-similar blowup of the 2D Boussinesq and 3D Euler equations with smooth data},
Year = {2022},
Eprint = {arXiv:2210.07191},
}

@article {Elg2021,
    AUTHOR = {Elgindi, Tarek},
     TITLE = {Finite-time singularity formation for {$C^{1,\alpha}$}
              solutions to the incompressible {E}uler equations on {$\mathbb
              R^3$}},
   JOURNAL = {Ann. of Math. (2)},
  FJOURNAL = {Annals of Mathematics. Second Series},
    VOLUME = {194},
      YEAR = {2021},
    NUMBER = {3},
     PAGES = {647--727},
      ISSN = {0003-486X},
   MRCLASS = {35Q31},
  MRNUMBER = {4334974},
       DOI = {10.4007/annals.2021.194.3.2},
       URL = {https://doi.org/10.4007/annals.2021.194.3.2},
}

@article {Yud1963,
    AUTHOR = {Judovi\v{c}, V. I.},
     TITLE = {Non-stationary flows of an ideal incompressible fluid},
   JOURNAL = {\v{Z}. Vy\v{c}isl. Mat i Mat. Fiz.},
  FJOURNAL = {\v{Z}urnal Vy\v{c}islitel\cprime no\u{\i} Matematiki i Matemati\v{c}esko\u{\i} Fiziki},
    VOLUME = {3},
      YEAR = {1963},
     PAGES = {1032--1066},
      ISSN = {0044-4669},
   MRCLASS = {35.79},
  MRNUMBER = {158189},
MRREVIEWER = {P. C. Fife},
}

@article {EbiMar1970,
    AUTHOR = {Ebin, David G. and Marsden, Jerrold},
     TITLE = {Groups of diffeomorphisms and the motion of an incompressible
              fluid},
   JOURNAL = {Ann. of Math. (2)},
  FJOURNAL = {Annals of Mathematics. Second Series},
    VOLUME = {92},
      YEAR = {1970},
     PAGES = {102--163},
      ISSN = {0003-486X},
   MRCLASS = {57.47 (76.00)},
  MRNUMBER = {271984},
MRREVIEWER = {W. S. Piper},
       DOI = {10.2307/1970699},
       URL = {https://doi.org/10.2307/1970699},
}

@article {maurelli2020non,
    AUTHOR = {Maurelli, Mario},
     TITLE = {Non-explosion by {S}tratonovich noise for {ODE}s},
   JOURNAL = {Electron. Commun. Probab.},
  FJOURNAL = {Electronic Communications in Probability},
    VOLUME = {25},
      YEAR = {2020},
     PAGES = {Paper No. 68, 10},
   MRCLASS = {60H10 (34C11 34F05 37B25)},
  MRNUMBER = {4158228},
       DOI = {10.3390/mca25010009},
       URL = {https://doi.org/10.3390/mca25010009},
}

@book {da2014stochastic,
    AUTHOR = {Da Prato, Giuseppe and Zabczyk, Jerzy},
     TITLE = {Stochastic equations in infinite dimensions},
    SERIES = {Encyclopedia of Mathematics and its Applications},
    VOLUME = {152},
   EDITION = {Second},
 PUBLISHER = {Cambridge University Press, Cambridge},
      YEAR = {2014},
     PAGES = {xviii+493},
      ISBN = {978-1-107-05584-1},
   MRCLASS = {60H15 (34F05 34Gxx)},
  MRNUMBER = {3236753},
MRREVIEWER = {David Nualart},
       DOI = {10.1017/CBO9781107295513},
       URL = {https://doi.org/10.1017/CBO9781107295513},
}

@book {da1996ergodicity,
    AUTHOR = {Da Prato, G. and Zabczyk, J.},
     TITLE = {Ergodicity for infinite-dimensional systems},
    SERIES = {London Mathematical Society Lecture Note Series},
    VOLUME = {229},
 PUBLISHER = {Cambridge University Press, Cambridge},
      YEAR = {1996},
     PAGES = {xii+339},
      ISBN = {0-521-57900-7},
   MRCLASS = {60H15 (28D05 60J25 60J35)},
  MRNUMBER = {1417491},
MRREVIEWER = {Bohdan Maslowski},
       DOI = {10.1017/CBO9780511662829},
       URL = {https://doi.org/10.1017/CBO9780511662829},
}

@article {Si86,
    AUTHOR = {Simon, Jacques},
     TITLE = {Compact sets in the space {$L^p(0,T;B)$}},
   JOURNAL = {Ann. Mat. Pura Appl. (4)},
  FJOURNAL = {Annali di Matematica Pura ed Applicata. Serie Quarta},
    VOLUME = {146},
      YEAR = {1987},
     PAGES = {65--96},
      ISSN = {0003-4622},
   MRCLASS = {46E40 (46E30)},
  MRNUMBER = {916688},
MRREVIEWER = {James Bell Cooper},
       DOI = {10.1007/BF01762360},
       URL = {https://doi.org/10.1007/BF01762360},
}

@article {Kato1988CommutatorEA,
    AUTHOR = {Kato, Tosio and Ponce, Gustavo},
     TITLE = {Commutator estimates and the {E}uler and {N}avier-{S}tokes
              equations},
   JOURNAL = {Comm. Pure Appl. Math.},
  FJOURNAL = {Communications on Pure and Applied Mathematics},
    VOLUME = {41},
      YEAR = {1988},
    NUMBER = {7},
     PAGES = {891--907},
      ISSN = {0010-3640},
   MRCLASS = {35Q10 (47F05 76D05)},
  MRNUMBER = {951744},
MRREVIEWER = {Josef Bemelmans},
       DOI = {10.1002/cpa.3160410704},
       URL = {https://doi.org/10.1002/cpa.3160410704},
}

@article {Lichtenstein1925,
    AUTHOR = {Lichtenstein, Leon},
     TITLE = {\"{U}ber einige {E}xistenzprobleme der {H}ydrodynamik homogener,
              unzusammendr\"{u}ckbarer, reibungsloser {F}l\"{u}ssigkeiten und die
              {H}elmholtzschen {W}irbels\"{a}tze},
   JOURNAL = {Math. Z.},
  FJOURNAL = {Mathematische Zeitschrift},
    VOLUME = {23},
      YEAR = {1925},
    NUMBER = {1},
     PAGES = {89--154},
      ISSN = {0025-5874},
   MRCLASS = {DML},
  MRNUMBER = {1544733},
       DOI = {10.1007/BF01506223},
       URL = {https://doi.org/10.1007/BF01506223},
}

@article{gunther1927motion,
  title={On the motion of fluid in a moving container},
  author={Gunther, N},
  journal={Izvestia Akad. Nauk USSR, Ser. Fiz. Mat},
  volume={20},
  pages={1323--1348},
  year={1927}
}

@article {Wolibner1933,
    AUTHOR = {Wolibner, W.},
     TITLE = {Un theor\`eme sur l'existence du mouvement plan d'un fluide
              parfait, homog\`ene, incompressible, pendant un temps infiniment
              long},
   JOURNAL = {Math. Z.},
  FJOURNAL = {Mathematische Zeitschrift},
    VOLUME = {37},
      YEAR = {1933},
    NUMBER = {1},
     PAGES = {698--726},
      ISSN = {0025-5874},
   MRCLASS = {DML},
  MRNUMBER = {1545430},
       DOI = {10.1007/BF01474610},
       URL = {https://doi.org/10.1007/BF01474610},
}

@article {10.2307/2667278,
    AUTHOR = {Capi\'{n}ski, Marek and Cutland, Nigel J.},
     TITLE = {Stochastic {E}uler equations on the torus},
   JOURNAL = {Ann. Appl. Probab.},
  FJOURNAL = {The Annals of Applied Probability},
    VOLUME = {9},
      YEAR = {1999},
    NUMBER = {3},
     PAGES = {688--705},
      ISSN = {1050-5164},
   MRCLASS = {35R60 (03H05 35Q30 60H15 76B99 76M35)},
  MRNUMBER = {1722278},
       DOI = {10.1214/aoap/1029962809},
       URL = {https://doi.org/10.1214/aoap/1029962809},
}

@article {doi:10.1137,
    AUTHOR = {Kim, Jong Uhn},
     TITLE = {On the stochastic {E}uler equations in a two-dimensional
              domain},
   JOURNAL = {SIAM J. Math. Anal.},
  FJOURNAL = {SIAM Journal on Mathematical Analysis},
    VOLUME = {33},
      YEAR = {2002},
    NUMBER = {5},
     PAGES = {1211--1227},
      ISSN = {0036-1410},
   MRCLASS = {76B03 (35Q35 35R60 60H15 76M35)},
  MRNUMBER = {1897709},
MRREVIEWER = {Hakima Bessaih},
       DOI = {10.1137/S0036141001383941},
       URL = {https://doi.org/10.1137/S0036141001383941},
}

@article {doi:10.1080,
    AUTHOR = {Bessaih, Hakima},
     TITLE = {Martingale solutions for stochastic {E}uler equations},
   JOURNAL = {Stochastic Anal. Appl.},
  FJOURNAL = {Stochastic Analysis and Applications},
    VOLUME = {17},
      YEAR = {1999},
    NUMBER = {5},
     PAGES = {713--725},
      ISSN = {0736-2994},
   MRCLASS = {60H15 (35Q30 35R60)},
  MRNUMBER = {1714895},
MRREVIEWER = {Marek Capi\'{n}ski},
       DOI = {10.1080/07362999908809631},
       URL = {https://doi.org/10.1080/07362999908809631},
}

@article {Athreya2012,
    AUTHOR = {Athreya, Avanti and Kolba, Tiffany and Mattingly, Jonathan C.},
     TITLE = {Propagating {L}yapunov functions to prove noise-induced
              stabilization},
   JOURNAL = {Electron. J. Probab.},
  FJOURNAL = {Electronic Journal of Probability},
    VOLUME = {17},
      YEAR = {2012},
     PAGES = {no. 96, 38},
   MRCLASS = {37A25 (37A30 37B25 60H10)},
  MRNUMBER = {2994844},
MRREVIEWER = {Vincent De Valk},
       DOI = {10.1214/EJP.v17-2410},
       URL = {https://doi.org/10.1214/EJP.v17-2410},
}

@article {4484185,
    AUTHOR = {Appleby, John A. D. and Mao, Xuerong and Rodkina, Alexandra},
     TITLE = {Stabilization and destabilization of nonlinear differential
              equations by noise},
   JOURNAL = {IEEE Trans. Automat. Control},
  FJOURNAL = {Institute of Electrical and Electronics Engineers.
              Transactions on Automatic Control},
    VOLUME = {53},
      YEAR = {2008},
    NUMBER = {3},
     PAGES = {683--691},
      ISSN = {0018-9286},
   MRCLASS = {93E15 (34F05 60H10)},
  MRNUMBER = {2401021},
MRREVIEWER = {G. N. Mil\cprime shte\u{\i}n},
       DOI = {10.1109/TAC.2008.919255},
       URL = {https://doi.org/10.1109/TAC.2008.919255},
}

@book {gard1988introduction,
    AUTHOR = {Gard, Thomas C.},
     TITLE = {Introduction to stochastic differential equations},
    SERIES = {Monographs and Textbooks in Pure and Applied Mathematics},
    VOLUME = {114},
 PUBLISHER = {Marcel Dekker, Inc., New York},
      YEAR = {1988},
     PAGES = {xiv+234},
      ISBN = {0-8247-7776-X},
   MRCLASS = {60H10 (93E03)},
  MRNUMBER = {917064},
MRREVIEWER = {Eckhard Platen},
}

@article {10.1214/EJP.v20-4047,
    AUTHOR = {Herzog, David P. and Mattingly, Jonathan C.},
     TITLE = {Noise-induced stabilization of planar flows {I}},
   JOURNAL = {Electron. J. Probab.},
  FJOURNAL = {Electronic Journal of Probability},
    VOLUME = {20},
      YEAR = {2015},
     PAGES = {no. 111, 43},
   MRCLASS = {60H10 (37A25 37A30 37B25 37H10 93E15)},
  MRNUMBER = {3418543},
MRREVIEWER = {Henri Schurz},
       DOI = {10.1214/ejp.v20-4047},
       URL = {https://doi.org/10.1214/ejp.v20-4047},
}

@article {doi101080Sch,
    AUTHOR = {Scheutzow, M.},
     TITLE = {Stabilization and destabilization by noise in the plane},
   JOURNAL = {Stochastic Anal. Appl.},
  FJOURNAL = {Stochastic Analysis and Applications},
    VOLUME = {11},
      YEAR = {1993},
    NUMBER = {1},
     PAGES = {97--113},
      ISSN = {0736-2994},
   MRCLASS = {60H30 (34D10 34F05 60H10)},
  MRNUMBER = {1202675},
MRREVIEWER = {M. J. Chappell},
       DOI = {10.1080/07362999308809304},
       URL = {https://doi.org/10.1080/07362999308809304},
}

@article {flandoli2021high,
    AUTHOR = {Flandoli, Franco and Luo, Dejun},
     TITLE = {High mode transport noise improves vorticity blow-up control
              in 3{D} {N}avier-{S}tokes equations},
   JOURNAL = {Probab. Theory Related Fields},
  FJOURNAL = {Probability Theory and Related Fields},
    VOLUME = {180},
      YEAR = {2021},
    NUMBER = {1-2},
     PAGES = {309--363},
      ISSN = {0178-8051},
   MRCLASS = {60H15 (60H50 76D05)},
  MRNUMBER = {4265023},
MRREVIEWER = {Martin Ondrej\'{a}t},
       DOI = {10.1007/s00440-021-01037-5},
       URL = {https://doi.org/10.1007/s00440-021-01037-5},
}

@article {ALONSOORAN2022244,
    AUTHOR = {Alonso-Or\'{a}n, Diego and Miao, Yingting and Tang, Hao},
     TITLE = {Global existence, blow-up and stability for a stochastic
              transport equation with non-local velocity},
   JOURNAL = {J. Differential Equations},
  FJOURNAL = {Journal of Differential Equations},
    VOLUME = {335},
      YEAR = {2022},
     PAGES = {244--293},
      ISSN = {0022-0396},
   MRCLASS = {60H15 (35A01 35B44 35Q51)},
  MRNUMBER = {4454281},
MRREVIEWER = {Ali S\"{u}leyman \"{U}st\"{u}nel},
       DOI = {10.1016/j.jde.2022.06.025},
       URL = {https://doi.org/10.1016/j.jde.2022.06.025},
}

@article{RTW2020,
title = {Distribution-Path Dependent Nonlinear SPDEs with Application to Stochastic Transport Type Equations},
doi = {
https://doi.org/10.48550/arXiv.2007.09188
},
pages = {1--24},
year = {2020},
author = {Panpan Ren and Hao Tang and Feng-Yu Wang},
}

@article{TW2022,
title = {A General Framework for Solving Singular SPDEs with Applications to Fluid Models Driven by Pseudo-differential Noise},
doi = {
https://doi.org/10.48550/arXiv.2208.08312
},
pages = {1--42},
year = {2022},
author = {Hao Tang and Feng-Yu Wang},
}

@article{BMX2023,
  title={No blow-up by nonlinear It\^o noise for the Euler equations},
  author={Bagnara, Marco and Maurelli, Mario and Xu, Fanhui},
  journal={arXiv preprint arXiv:2305.09852},
  year={2023}
}

\end{document}